\declaretheorem[numberwithin=section]{theorem}
\declaretheorem[sibling=theorem]{lemma}
\declaretheorem[sibling=theorem]{proposition}
\declaretheorem[style=definition,numbered=no]{definition}
\declaretheorem[style=remark,sibling=theorem]{example}
\declaretheorem[style=remark,sibling=theorem]{remark}
\declaretheorem[style=remark,numbered=no]{notation}
\numberwithin{equation}{section}
\numberwithin{figure}{section}
\DeclareMathAccent{\mss}{\mathrel}{letters}{"5E}
\title{Group operads as crossed interval groups}
\date{\today}
\author{Jun Yoshida}
\begin{document}
\maketitle


\tableofcontents

\section*{Introduction}
\label{sec:intro}
\addcontentsline{toc}{section}{Introduction}

The notion of group operads was developed in order to present various symmetries on operads.
The original definition was proposed by Zhang in his paper \cite{Zha11} though the axioms were already stated in 1.2.0.2 in the paper \cite{Wah01}.
Further theories in a more categorical viewpoint were developed by Corner and Gurski in the paper \cite{CG13} with the different name ``action operads.''
We here explain the fundamental idea.
Recall first that there are classically two conventions for ``operads''; namely, planar ones and symmetric ones.
For an operad $\mathcal O$, the latter also cares about a right action of the symmetric group $\mathfrak S_n$ on the set $\mathcal O(n)$ which is subject to a certain compatibility condition with the operad structure of $\mathcal O$.
The key observation is that the family $\{\mathfrak S_n\}_n$ itself forms an operad $\mathfrak S$ with $\mathfrak S(n)=\mathfrak S_n$.
For a symmetric operad $\mathcal O$, the compatibility condition is described as below:
\[
\gamma_{\mathcal O}(x^\sigma;x_1^{\sigma_1},\dots,x_n^{\sigma_n})
= \gamma_{\mathcal O}(x;x_{\sigma^{-1}(1)},\dots,x_{\sigma^{-1}(n)})^{\gamma_{\mathfrak S}(\sigma;\sigma_1,\dots,\sigma_n)}
\]
for $x\in\mathcal O(n)$, $x_i\in\mathcal O(k_i)$, $\sigma\in\mathfrak S_n$, and $\sigma_i\in\mathfrak S_{k_i}$, where $\gamma_{\mathcal O}$ and $\gamma_{\mathfrak S}$ are the composition operations in the operad $\mathcal O$ and $\mathfrak S$ respectively.
A group operad is, roughly, an operad $\mathcal G$ with each $\mathcal G(n)$ equipped with a group structure so that we can put it in place of the operad $\mathfrak S$ in the above argument.
The examples include the operad of braid groups, of pure braid groups, and of ribbon braid groups.

On the other hand, there is another approach to symmetries on operads.
Namely, Batanin and Markl \cite{BataninMarkl2014} focused on the notion of crossed interval groups, which are ``interval version'' of crossed simplicial groups introduced in \cite{FL91} and \cite{Kra87}.
They showed that the family $\mathfrak S=\{\mathfrak S_n\}_n$ also admits a structure of a crossed interval group and that the ``free symmetrization'' by $\mathfrak S$ does not change the homotopy type of operads of chain complexes.
They used this fact to prove that the operad of all natural operations on the Hochschild cochain complex of associative algebras has the same homotopy type of the singular chain of the little discs operad.
As for the general theory of crossed interval groups, the author investigated them in the paper \cite{Yoshidalimcolim}, where a classification result was obtained.
Namely, the terminal object $\mathfrak W_\nabla$ in the category of crossed interval groups has exactly six crossed interval subgroups $\ast$, $C_2$, $\mathfrak S$, $\mathfrak S\times C_2$, $\mathfrak H$, and $\mathfrak W_\nabla$.

The goal of this paper is to establish a comparison result of the two notions above.
We will see that group operads are nothing but a special kind of crossed interval groups.
More precisely, we will construct a functor $\widehat\Psi$ from the category $\mathbf{GrpOp}$ of group operads to the category $\mathbf{CrsGrp}_\nabla$ of crossed interval groups.
This functor is fully faithful, and there is an explicit description for the essential image of it.
The result is established in the following way.

In \cref{sec:grpoper}, the basic definition and examples of group operads will be reviewed.
Moreover, we will see that the slice category $\mathbf{Op}^{/\mathfrak S}$ of the category of operads over $\mathfrak S$ admits a monoidal structure $\rtimes$.
For a group operad $\mathcal G$, the group structure on each $\mathcal G(n)$ gives rise to a structure
\[
\mathcal G\rtimes\mathcal G\to\mathcal G
\ ,\quad \ast\to\mathcal G
\]
of a monoid object.
This defines a fully faithful embedding
\[
\mathbf{GrpOp}\to\mathbf{Mon}(\mathbf{Op}^{/\mathfrak S},\rtimes)
\]
into the category of monoid objects.
The necessary and sufficient condition for a monoid object to produce a group operad will be given.

On the other hand, as reviewed in \cref{sec:crsintgrp}, a crossed interval group is also a monoid object in the slice category $\mathbf{Set}^{/\mathfrak W_\nabla}_\nabla$ over $\mathfrak W_\nabla$ of the category of presheaves on the category $\nabla$ of intervals with respect to a monoidal structure $\rtimes$.
The essential image of $\mathbf{CrsGrp}_\nabla\to\mathbf{Mon}(\mathbf{Set}^{/\mathfrak W_\nabla}_\nabla,\rtimes)$ is described in a similar way to the case of group operads.
Note that the monoidal structure $\rtimes$ is monoidally closed so that the category $\mathbf{Mon}(\mathbf{Set}^{/\mathfrak W_\nabla}_\nabla,\rtimes)$ is locally presentable.
Since $\mathbf{CrsGrp}_\nabla$ is a reflective and coreflective subcategory of it, $\mathbf{CrsGrp}_\nabla$ is also locally presentable.

The two monoidal structures above are denoted by the same symbol because they are actually \emph{the same}.
Indeed, in \cref{sec:emb-ptdop}, we will construct a functor $\Psi:(\mathbf{Op}^{/\mathfrak S})^{\ast/}\to\mathbf{Set}^{/\mathfrak S}_\nabla$ and show it is strictly monoidal.
This implies that it induces a functor between the categories of monoid objects.
The computations of the essential images of $\mathbf{GrpOp}$ and of $\mathbf{CrsGrp}_\nabla$ in the category of monoid objects shows the functor actually restricts to $\mathbf{GrpOp}\to\mathbf{CrsGrp}^{/\mathfrak S}_\nabla$, which is exactly $\widehat\Psi$ mentioned above.
It will be proved that $\widehat\Psi$ is fully faithful.
In addition, a version of Adjoint Functor Theorem implies $\widehat\Psi$ admits a left adjoint, which enables us to regard $\mathbf{GrpOp}$ as a reflective subcategory of $\mathbf{CrsGrp}^{/\mathfrak S}_\nabla$.

In \cref{sec:operintgrp}, we will determine the essential image of the embedding $\widehat\Psi$.
The two conditions on crossed interval groups over $\mathfrak S$ will be considered.
One is the \emph{operadicity} which is the commutativity of certain elements.
If $G$ is an operadic crossed interval group, then it gives rise to an operad $\mathcal O_G$.
This implies operadic crossed interval group live in between group operads and operads with level-wise group structures.
Another condition is \emph{tameness}.
It will turn out that a group operad is exactly a tame and operadic crossed interval group over $\mathfrak S$.
Since there are ``operadification'' and ``taming'' of crossed interval group over $\mathfrak S$, this will give an explicit description of the left adjoint to the embedding $\widehat\Psi$.

We finally mention that, combining the embedding $\mathbf{GrpOp}\hookrightarrow\mathbf{CrsGrp}^{/\mathfrak S}_\nabla$ with the base-change theorem for crossed groups along the functor $\mathfrak J:\widetilde\Delta\to\nabla$ discussed in \cite{Yoshidalimcolim}, we obtain an augmented crossed simplicial group $\mathfrak J^\natural\mathcal G$ for each group operad $\mathcal G$.
We will show its \emph{total category} $\widetilde\Delta_{\mathcal G}$ classifies monoid objects in monoidal categories.
Note that this is an extension of the result of MacLane \cite{McL98}.
Actually, this observation clarify how the concept of Hochschild homologies for algebras makes sense in $\mathcal G$-symmetric monoidal abelian categories for general group operad $\mathcal G$.

\subsection*{Acknowledgment}

I would like first to thank my supervisor Prof. Toshitake Kohno for the encouragement and a lot of kind support.
Some important ideas were obtained after discussions with Prof. Ross Street.
This work was supported by the Program for Leading Graduate Schools, MEXT, Japan.
This work was supported by JSPS KAKENHI Grant Number JP15J07641.

\section{Group operads}
\label{sec:grpoper}

In this first section, we recall the formal definition of group operads.
Note that, throughout the paper, we use the convention where every operads are planar, unenriched (or $\mathbf{Set}$-enriched), and single colored.

\begin{notation}
\begin{enumerate}[label={\rm(\arabic*)}]
  \item For each natural number $n\in\mathbb N$, write
\[
\langle n\rangle:=\{1,\dots,n\}\ .
\]
We often regard it as the linearly ordered set with the canonical order.
  \item If $P$ and $Q$ are poset, i.e. partially ordered set, then we denote by $P\star Q$ the join of them.
In other words, $P\star Q$ is the set $P\amalg Q$ together with the ordering so that for $x,y\in P\star Q$,
\[
x\le y
\iff
\begin{cases}
(x,y)\in (P\times P)\amalg (Q\times Q)\  \text{with}\  x\le y
\ \text{, or} \\
x\in P\ \text{and}\  y\in Q\ .
\end{cases}
\]
Hence, we have a unique isomorphism $\langle m\rangle\star\langle n\rangle\cong\langle m+n\rangle$ of posets.
\end{enumerate}
\end{notation}

For each natural number $n\in\mathbb N$, we put $\mathfrak S(n)$ the $n$-th permutation group, or the permutation group on the set $\langle n\rangle=\{1,\dots,n\}$.
We begin with the observation that the family $\mathfrak S=\{\mathfrak S(n)\}_n$ admits a canonical structure of operads.
Namely, if $\sigma\in\mathfrak S(n)$ and $\sigma_i\in\mathfrak S(k_i)$ for $1\le i\le n$, we write $\gamma(\sigma;\sigma_1,\dots,\sigma_n)$ the permutation on $\langle k_1+\dots+k_n\rangle$ given below:
\[
\begin{split}
\langle k_1+\dots+k_n\rangle
&\cong \langle k_1\rangle\star\dots\star\langle k_n\rangle \\
&\xrightarrow{\sigma_1\amalg\dots\amalg\sigma_n} \langle k_1\rangle\star\dots\star\langle k_n\rangle \\
&\xrightarrow{\sigma_\ast} \langle k_{\sigma^{-1}(1)}\rangle\star\dots\star\langle k_{\sigma^{-1}(n)}\rangle \\
&\cong\langle k_1+\dots+k_n\rangle\ .
\end{split}
\]
This defines a map
\[
\gamma=\gamma_{\mathfrak S}:\mathfrak S(n)\times\prod_{i=1}^n\mathfrak S(k_1)\to\mathfrak S(k_1+\dots+k_n)\ .
\]
It is tedious but not difficult to see it makes $\mathfrak S$ into an operad.

Group operads are likely ``generalizations'' of the operad $\mathfrak S$.
For the definition, we follow \cite{CG13} for conventions except the terminology.

\begin{definition}
A \emph{group operad} is an operad $\mathcal G$ together with data
\begin{itemize}
  \item a group structure on each $\mathcal G(n)$;
  \item a map $\mathcal G\to\mathfrak S$ of operads so that each $\mathcal G(n)\to\mathfrak S(n)$ is a group homomorphism, which gives rise to a left $\mathcal G(n)$-action on $\langle n\rangle$;
\end{itemize}
which satisfy the identity
\begin{equation}
\label{eq:grpop-interchange}
\gamma_{\mathcal G}(xy;x_1y_1,\dots,x_ny_n)
= \gamma_{\mathcal G}(x;x_{y^{-1}(1)},\dots,x_{y^{-1}(n)})\gamma_{\mathcal G}(y;y_1,\dots,y_n)
\end{equation}
for every $x,y\in\mathcal G(n)$ and $x_i,y_i\in\mathcal G(k_i)$ for $1\le i\le n$.
\end{definition}

\begin{remark}
Note that, in the paper \cite{CG13} the terminology ``action operads'' was used.
This is probably because the name ``group operads'' may be confusing with \emph{group-enriched operads} or \emph{group objects in operads}.
Nevertheless, we stick to the terminology in a certain reason, which will turn out later.
\end{remark}

\begin{example}
The operad $\mathfrak S$ is a group operad with the identity map $\mathfrak S\xrightarrow=\mathfrak S$.
\end{example}

\begin{example}
For each $n\in\mathbb N$, denote by $\mathcal B(n)$ the braid group of $n$-strands.
In a similar manner to $\mathfrak S$, one can endow the family $\mathcal B=\{\mathcal B(n)\}_n$ with the structure of operads.
Then, the canonical quotient map $\mathcal B\to\mathfrak S$ exhibits $\mathcal B$ as a group operad.
The similar argument works for pure braids, ribbon braids, and so on.
\end{example}

The reader can find more interesting examples in \cite{Zha11} and \cite{Gur15}.
We here mention basic properties of group operads.

\begin{proposition}
\label{prop:grpop-unit}
For a group operad $\mathcal G$, the following hold.
\begin{enumerate}[label={\rm(\arabic*)}]
  \item\label{req:grpop-unit:unit} The composition map
\[
\gamma:\mathcal G(1)\times\mathcal G(1)\to\mathcal G(1)
\]
in the operad structure on $\mathcal G$ coincides with the multiplication in the group structure.
In particular, the unit $e_1\in\mathcal G(1)$ in the group structure is exactly the identity of the operad $\mathcal G$.
Moreover, $\mathcal G(1)$ is an abelian group.
  \item\label{req:grpop-unit:comp} For each $n\in\mathbb N$, write $e_n\in\mathcal G(n)$ the unit in the group structure.
Then, for $k_1,\dots,k_n\in\mathbb N$, we have
\[
\gamma(e_n;e_{k_1},\dots,e_{k_n})=e^{}_{k_1+\dots+k_n}\ .
\]
In other words, the family $\{e_n\}_n$ determines a map $\ast\to\mathcal G$ of operads from the terminal (or trivial) operad $\ast$.
  \item\label{req:grpop-unit:homo} For each $n\in\mathbb N$, the map
\[
\mathcal G(1)\to\mathcal G(n)
\ ;\quad x\mapsto \gamma(x;e_n)
\]
is a group homomorphism.
\end{enumerate}
\end{proposition}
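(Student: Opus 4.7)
My plan is to derive all three claims by feeding carefully chosen inputs into the interchange identity \eqref{eq:grpop-interchange}, exploiting at every step the triviality of $\mathfrak S(1)$, which forces any element of $\mathcal G(1)$ to act as the identity permutation on indices.

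For \ref{req:grpop-unit:unit}, the plan is to apply the Eckmann--Hilton argument to the set $\mathcal G(1)$. This set carries two \emph{a priori} distinct unital binary operations: the group multiplication $\mu(x,y):=xy$ with unit $e_1$, and the operad composition $\nu(x,y):=\gamma(x;y)$ whose unit is the operad identity, which I will temporarily denote $\mathbb 1_1\in\mathcal G(1)$. Restricting \eqref{eq:grpop-interchange} to $n=k_1=1$, the re-indexing by $y^{-1}$ is automatically trivial because $\mathcal G(1)\to\mathfrak S(1)$ lands in a singleton, so the identity becomes
\[
\nu\bigl(\mu(x,y),\mu(x_1,y_1)\bigr)=\mu\bigl(\nu(x,x_1),\nu(y,y_1)\bigr),
\]
the classical Eckmann--Hilton interchange. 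The standard argument then delivers $\mathbb 1_1=e_1$, $\mu=\nu$, and commutativity in a single stroke.

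For \ref{req:grpop-unit:comp}, I would substitute $x=y=e_n$ and $x_i=y_i=e_{k_i}$ in \eqref{eq:grpop-interchange}. Since $e_n$ maps to the identity of $\mathfrak S(n)$, the index permutation $y^{-1}$ acts trivially, so the right-hand side collapses to the square of $\gamma(e_n;e_{k_1},\dots,e_{k_n})$ in the group $\mathcal G(k_1+\dots+k_n)$, while the left-hand side is just the element itself; idempotency in a group forces it to equal $e_{k_1+\dots+k_n}$. For \ref{req:grpop-unit:homo}, I would apply \eqref{eq:grpop-interchange} with outer arity $1$, so $x,y\in\mathcal G(1)$ and $x_1,y_1\in\mathcal G(n)$. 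The re-indexing disappears again, reducing the identity to $\gamma(xy;x_1y_1)=\gamma(x;x_1)\gamma(y;y_1)$; specialising $x_1=y_1=e_n$ yields $\gamma(xy;e_n)=\gamma(x;e_n)\gamma(y;e_n)$ as required.

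The only real subtlety is in \ref{req:grpop-unit:unit}: one must resist conflating $\mathbb 1_1$ and $e_1$ from the outset and carry out the Eckmann--Hilton bookkeeping cleanly before identifying them. Once the Eckmann--Hilton step is in hand, parts \ref{req:grpop-unit:comp} and \ref{req:grpop-unit:homo} are short specialisations of \eqref{eq:grpop-interchange} with no further obstacles.
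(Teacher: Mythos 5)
Your proposal is correct and follows essentially the same route as the paper: the Eckmann--Hilton argument applied to the interchange law restricted to $\mathcal G(1)$ for part \ref{req:grpop-unit:unit}, the idempotency of $\gamma(e_n;e_{k_1},\dots,e_{k_n})$ in the group $\mathcal G(k_1+\dots+k_n)$ for part \ref{req:grpop-unit:comp}, and a direct specialisation of \eqref{eq:grpop-interchange} for part \ref{req:grpop-unit:homo}. Your explicit care in distinguishing the operad identity from the group unit before running Eckmann--Hilton is a slightly more detailed rendering of the same argument the paper leaves implicit.
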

\begin{proof}
Notice that, for $x,x',y,y'\in\mathcal G(1)$, the condition \eqref{eq:grpop-interchange} on group operads implies
\begin{equation}
\label{eq:prf:grpop-unit:EHarg}
\gamma(xx';yy')=\gamma(x;y)\gamma(x';y')\ .
\end{equation}
Hence, the part \ref{req:grpop-unit:unit} follows from the \emph{Eckmann-Hilton argument}.

To see \ref{req:grpop-unit:comp}, since $\mathcal G(k_1+\dots+k_n)$ is a group, it suffices to see the element $\gamma(e_n;e_{k_1},\dots,e_{k_n})$ is idempotent.
By the condition on group operads again, we have
\[
\gamma(e_n;e_{k_1},\dots,e_{k_n})^2
=\gamma(e_n^2;e_{k_1}^2,\dots,e_{k_n}^2)
=\gamma(e_n;e_{k_1},\dots,e_{k_n})\ .
\]
This implies $\gamma(e_n;e_{k_1},\dots,e_{k_n})=e_{k_1+\dots+k_n}$.

The last assertion \ref{req:grpop-unit:homo} directly follows from the condition \eqref{eq:grpop-interchange} and the part \ref{req:grpop-unit:unit}.
\end{proof}

\begin{definition}
Let $\mathcal G$ and $\mathcal H$ be group operads.
Then, a \emph{map of group operads} is a map $f:\mathcal G\to\mathcal H$ of operads such that
\begin{enumerate}[label={\rm(\roman*)}]
  \item for each $n\in\mathbb N$, the map $f:\mathcal G(n)\to\mathcal H(n)$ is a group homomorphism;
  \item the triangle below is commutative:
\[
\vcenter{
  \xymatrix@C=1em{
    \mathcal G \ar[rr]^f \ar[dr] && \mathcal H \ar[dl] \\
    & \mathfrak S & }}\ ,
\]
where the vertical arrows are the structure maps.
\end{enumerate}
\end{definition}

Clearly maps of group operads compose so as to form a category, which we will denote by $\mathbf{GrpOp}$.
One of the important results proved in \cite{Gur15} is the presentability of the category.

\begin{theorem}[Theorem~3.5 and Theorem~3.8 in \cite{Gur15}]
\label{theo:grpop-locpres}
The category $\mathbf{GrpOp}$ is locally finitely presentable.
Moreover, the forgetful functor $U:\mathbf{GrpOp}\to\mathbf{Set}_{\mathbb N}^{/\mathfrak S}$ into the slice category of $\mathbb N$-indexed families of sets over the family $\{\mathfrak S(n)\}_n$ creates limits and filtered colimits.
\end{theorem}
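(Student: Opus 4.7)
The plan is to exhibit $\mathbf{GrpOp}$ as the category of algebras for a finitary monad on the locally finitely presentable base $\mathbf{Set}_{\mathbb N}^{/\mathfrak S}$; once this is in place, the two assertions of the theorem reduce to standard results in categorical algebra.

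First I would note that $\mathbf{Set}_{\mathbb N}^{/\mathfrak S}$ is itself locally finitely presentable: it is the slice of the $\mathbb N$-indexed product of copies of $\mathbf{Set}$ over the fixed object $\{\mathfrak S(n)\}_n$, and both limits and filtered colimits are computed fibrewise over $\mathfrak S$. A group operad structure on an object $\mathcal G\to\mathfrak S$ consists of
\begin{enumerate}[label={\rm(\roman*)}]
  \item for each tuple $(n;k_1,\dots,k_n)\in\mathbb N^{n+1}$ a composition map $\gamma_{n;k_1,\dots,k_n}$ from $\mathcal G(n)\times\mathcal G(k_1)\times\cdots\times\mathcal G(k_n)$ to $\mathcal G(k_1+\cdots+k_n)$ commuting with the maps to $\mathfrak S$;
  \item for each $n$ a multiplication, unit, and inverse on $\mathcal G(n)$ whose images in $\mathfrak S(n)$ are fixed by the given group structure on $\mathfrak S(n)$;
\end{enumerate}
subject to the operad axioms, the group axioms, and the interchange law \eqref{eq:grpop-interchange}. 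Every operation has finite arity (its domain is a finite fibred product of the $\mathcal G(k)$'s over $\mathfrak S$), and every axiom is a universally quantified equation in only finitely many variables. It follows that $U:\mathbf{GrpOp}\to\mathbf{Set}_{\mathbb N}^{/\mathfrak S}$ is monadic for a finitary monad.

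From this the theorem falls out. For a diagram $\mathcal D\to\mathbf{GrpOp}$, compute the limit $L$ in $\mathbf{Set}_{\mathbb N}^{/\mathfrak S}$; each of the finitary operations listed above commutes with the limit, and the defining equations are inherited levelwise, so $L$ has a unique group-operad structure making it the limit in $\mathbf{GrpOp}$. Replacing ``limit'' by ``filtered colimit'' the same argument works, because filtered colimits in $\mathbf{Set}$ commute with finite products and hence with the fibred products over $\mathfrak S$ involved in the domains of the $\gamma_{n;k_1,\dots,k_n}$. Hence $U$ creates limits and filtered colimits. Local finite presentability of $\mathbf{GrpOp}$ is then a consequence of the standard fact that the category of algebras for a finitary monad on a locally finitely presentable category is again locally finitely presentable.

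The main subtlety, rather than a serious obstacle, is to check that the compatibility of the additional structure with the fixed map to $\mathfrak S$ can be encoded inside the operations and equations, rather than as an external side condition. This is handled precisely by working in the slice $\mathbf{Set}_{\mathbb N}^{/\mathfrak S}$: the finite products that serve as domains of the $\gamma_{n;k_1,\dots,k_n}$ are taken as fibred products over the (already given) composition in $\mathfrak S$, so the required commuting squares with $\mathfrak S$ become part of the data of the operations themselves.
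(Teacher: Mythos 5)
Your argument is correct, but it is worth noting that the paper itself offers no proof of this statement: it is imported wholesale from Gurski's Theorems~3.5 and~3.8, so what you have written is a self-contained replacement rather than a reconstruction. The route you take --- exhibit $U$ as finitary monadic over the locally finitely presentable base and then quote the standard facts that finitary monadic functors create limits and filtered colimits and that their categories of algebras over an lfp category are lfp --- is the standard one and it works. The one place to tighten is your description of the arities: the domain of $\gamma_{n;k_1,\dots,k_n}$ is \emph{not} a fibred product of the $\mathcal G(k_i)$ over any single $\mathfrak S(m)$ (the factors live over different $\mathfrak S(k_i)$, and a literal pullback would be the wrong object). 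The clean formulation is to use the equivalence $\mathbf{Set}_{\mathbb N}^{/\mathfrak S}\simeq\mathbf{Set}^{S}$ with sort set $S=\coprod_n\mathfrak S(n)$, and to present $\mathbf{GrpOp}$ as the models of a many-sorted finitary algebraic theory whose operations are plain finite products of sorts, e.g.\ $\gamma:\mathcal G_\sigma\times\mathcal G_{\sigma_1}\times\dots\times\mathcal G_{\sigma_n}\to\mathcal G_{\gamma_{\mathfrak S}(\sigma;\sigma_1,\dots,\sigma_n)}$ and $m:\mathcal G_\sigma\times\mathcal G_\tau\to\mathcal G_{\sigma\tau}$; the compatibility with the map to $\mathfrak S$ is then exactly sort-preservation, and the interchange law \eqref{eq:grpop-interchange} is a well-sorted equation precisely because $\mathfrak S$ itself satisfies it. With that adjustment every step you invoke is standard and the two conclusions follow as you say.
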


We next see the notion of group operads also arise from a monoidal structure.

\begin{definition}
Let $\mathcal X$ and $\mathcal Y$ be two operads, and suppose we are given a map $\rho:\mathcal Y\to\mathfrak S$ of operads.
We define an operad $\mathcal X\rtimes\mathcal Y$ as follows:
\begin{itemize}
  \item for each $n\in\mathbb N$, put $(\mathcal X\rtimes\mathcal Y)(n):=\mathcal X(n)\times\mathcal Y(n)$;
  \item for $(x,y)\in(\mathcal X\rtimes\mathcal Y)(n)$ and $(x_i,y_i)\in(\mathcal X\rtimes\mathcal Y)(k_i)$ for $1\le i\le n$, the composition is given by
\[
\begin{multlined}
\gamma_{\mathcal X\rtimes\mathcal Y}((x,y);(x_1,y_1),\dots,(x_n,y_n)) \\
:= (\gamma_{\mathcal X}(x;x_{\rho(y)^{-1}(1)},\dots,x_{\rho(y)^{-1}(n)}),\gamma_{\mathcal Y}(y;y_1,\dots,y_n))\ ;
\end{multlined}
\]
\end{itemize}
\end{definition}

It is easily verified that the above data actually define an operad $\mathcal X\rtimes\mathcal Y$ so that the identity is the pair $\mathrm{id}_{\mathcal X\rtimes\mathcal Y}=(\mathrm{id}_{\mathcal X},\mathrm{id}_{\mathcal Y})$.
Moreover, the assignment $(\mathcal X,\mathcal Y,\rho)\mapsto \mathcal X\rtimes\mathcal Y$ is functorial; indeed, if we have a map $f:\mathcal X\to\mathcal X'$ and a triangle
\[
\xymatrix@C=1em{
  \mathcal Y \ar[rr]^g \ar[dr]_\rho && \mathcal Y' \ar[dl]^{\rho'} \\
  & \mathfrak S & }
\]
of operads, the maps
\[
f\rtimes g:(\mathcal X\rtimes\mathcal Y)(n)
\to (\mathcal X'\rtimes\mathcal Y')(n)
\ ;\quad (x,y) \mapsto (f(x),g(y))
\]
form a map $\mathcal X\rtimes\mathcal Y\to\mathcal X'\rtimes\mathcal Y'$ of operads.
In other words, if we denote by $\mathbf{Op}$ the category of operads and by $\mathbf{Op}^{/\mathfrak S}$ the slice category over $\mathfrak S$, then we obtain a functor
\begin{equation}
\label{eq:rtimes-OpxOpS}
\rtimes:\mathbf{Op}\times\mathbf{Op}^{/\mathfrak S}\to\mathbf{Op}\ .
\end{equation}

The following result is a direct consequence of the operad structure of $\mathfrak S$.

\begin{lemma}
\label{lem:operS-prod}
The multiplication maps
\[
\mathtt{mul}:\mathfrak S(n)\times\mathfrak S(n)\to\mathfrak S(n)
\ ;\quad (\sigma,\tau)\mapsto \sigma\tau
\]
define a map $\mathfrak S\rtimes\mathfrak S\to\mathfrak S$ of operads, here we take $\mathfrak S\rtimes\mathfrak S$ with respect to the identity map $\mathfrak S\xrightarrow=\mathfrak S$.
\end{lemma}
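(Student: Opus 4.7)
The plan is to verify directly, from the definitions of $\mathfrak S\rtimes\mathfrak S$ and of a map of operads, that the level-wise multiplication maps $\mathtt{mul}_n$ are compatible with the unit and the composition. The unit condition $\mathtt{mul}_1(\mathrm{id},\mathrm{id})=\mathrm{id}$ is immediate, so the substantive point is compatibility with composition.

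Fix $(\sigma,\tau)\in(\mathfrak S\rtimes\mathfrak S)(n)$ and $(\sigma_i,\tau_i)\in(\mathfrak S\rtimes\mathfrak S)(k_i)$ for $1\le i\le n$. Since $\mathfrak S\rtimes\mathfrak S$ is formed with respect to the identity map $\mathfrak S\xrightarrow=\mathfrak S$, unfolding the definition of the composition in $\mathfrak S\rtimes\mathfrak S$ and applying $\mathtt{mul}$ coordinatewise, the identity to be verified becomes
\[
\gamma_{\mathfrak S}(\sigma;\sigma_{\tau^{-1}(1)},\dots,\sigma_{\tau^{-1}(n)})\cdot\gamma_{\mathfrak S}(\tau;\tau_1,\dots,\tau_n)
= \gamma_{\mathfrak S}(\sigma\tau;\sigma_1\tau_1,\dots,\sigma_n\tau_n).
\]
This is precisely the interchange identity \eqref{eq:grpop-interchange} specialised to the group operad $\mathfrak S$, whose group operad structure (with the identity map $\mathfrak S\xrightarrow=\mathfrak S$) has already been recorded in the example above; hence the lemma follows.

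If one prefers a self-contained derivation avoiding that sideways reference, the identity above can be checked directly from the geometric description given earlier: $\gamma_{\mathfrak S}(\sigma;\sigma_1,\dots,\sigma_n)$ is the composite, under the isomorphism $\langle k_1+\cdots+k_n\rangle\cong\langle k_1\rangle\star\cdots\star\langle k_n\rangle$, of the block permutation $\sigma_1\amalg\cdots\amalg\sigma_n$ with the block-reshuffling $\sigma_\ast$ induced by $\sigma$. The key bookkeeping point — and the only step where one should slow down — is that pulling $\sigma_\ast$ past $\tau_1\amalg\cdots\amalg\tau_n$ relabels the blockwise factors so that the $i$-th factor becomes $\sigma_{\tau^{-1}(i)}$, producing the twist on the left-hand side. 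There is no conceptual obstacle beyond this indexing accounting; once that step is done, the two sides agree slot by slot.
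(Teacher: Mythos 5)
Your proof is correct and matches the paper's intent: the paper states this lemma without proof as ``a direct consequence of the operad structure of $\mathfrak S$,'' and the identity you reduce it to --- the interchange law \eqref{eq:grpop-interchange} for $\mathfrak S$ --- is exactly that content. One small slip in your optional self-contained derivation: the relabelling that produces $\sigma_{\tau^{-1}(i)}$ comes from commuting the block permutation $\tau_\ast$ past $\sigma_1\amalg\dots\amalg\sigma_n$ (i.e.\ $(\sigma_{\tau^{-1}(1)}\amalg\dots\amalg\sigma_{\tau^{-1}(n)})\circ\tau_\ast=\tau_\ast\circ(\sigma_1\amalg\dots\amalg\sigma_n)$), not from pulling $\sigma_\ast$ past $\tau_1\amalg\dots\amalg\tau_n$; the identity you state is nonetheless the right one.
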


\Cref{lem:operS-prod} offers a lift of the functor \eqref{eq:rtimes-OpxOpS} to a \emph{binary operation} on $\mathbf{Op}^{/\mathfrak S}$; indeed, we have the following composition
\begin{equation}
\label{eq:rtimes-OpSxOpS}
\rtimes:\mathbf{Op}^{/\mathfrak S}\times\mathbf{Op}^{/\mathfrak S}
\cong (\mathbf{Op}\times\mathbf{Op}^{/\mathfrak S})^{/(\mathfrak S,\mathfrak S)}
\xrightarrow{\rtimes} \mathbf{Op}^{/\mathfrak S\rtimes\mathfrak S}
\xrightarrow{\mathtt{mul}_!} \mathbf{Op}^{/\mathfrak S}\ .
\end{equation}

\begin{proposition}
\label{prop:rtimes-monstr}
The functor \eqref{eq:rtimes-OpSxOpS} defines a monoidal structure on the category $\mathbf{Op}^{/\mathfrak S}$ so that the trivial operad $\ast$ is the unit object.
\end{proposition}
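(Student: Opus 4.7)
\smallskip

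The plan is to take the unitors and the associator of the proposed monoidal structure to be the bijections inherited from the cartesian monoidal structure on $\mathbf{Set}$: for each $n$, the level-$n$ components
\[
(\mathcal X\rtimes\mathcal Y)\rtimes\mathcal Z(n)
=(\mathcal X(n)\times\mathcal Y(n))\times\mathcal Z(n)
\xrightarrow{\cong}\mathcal X(n)\times(\mathcal Y(n)\times\mathcal Z(n))
=\mathcal X\rtimes(\mathcal Y\rtimes\mathcal Z)(n)
\]
are the obvious regroupings, and similarly for the unitors $\ast\rtimes\mathcal X\cong\mathcal X\cong\mathcal X\rtimes\ast$. Granting that these are genuinely maps in $\mathbf{Op}^{/\mathfrak S}$, the pentagon and triangle identities are immediate from those of the cartesian structure on $\mathbf{Set}$, since both sides of each coherence diagram are equal to the canonical reshuffling of a cartesian product of underlying sets at every level. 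Thus the task reduces to verifying that the associator and unitors are (a) compatible with the structure maps to $\mathfrak S$ and (b) compatible with the operad compositions $\gamma$.

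Condition (a) is a short check using \cref{lem:operS-prod}: the structure map of $\mathcal X\rtimes\mathcal Y$ is the pointwise product $\rho_{\mathcal X}\cdot\rho_{\mathcal Y}$ in $\mathfrak S$, so both iterated structure maps on $(\mathcal X\rtimes\mathcal Y)\rtimes\mathcal Z$ and $\mathcal X\rtimes(\mathcal Y\rtimes\mathcal Z)$ equal $\rho_{\mathcal X}\cdot\rho_{\mathcal Y}\cdot\rho_{\mathcal Z}$ by associativity of the group multiplication in $\mathfrak S(n)$. For the unitors, the structure map of $\ast$ is trivial, so $\rho_{\ast}\cdot\rho_{\mathcal X}=\rho_{\mathcal X}=\rho_{\mathcal X}\cdot\rho_{\ast}$ on the nose.

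Condition (b) at the unitors is almost tautological: for $\ast\rtimes\mathcal X$ the permutation $\rho_{\mathcal X}(x)^{-1}$ reshuffles a tuple of $\ast$'s, which is trivial, and the composition on the $\mathcal X$-factor is $\gamma_{\mathcal X}$ itself; and for $\mathcal X\rtimes\ast$ the permutation is $\rho_\ast = \mathrm{id}$, so no reshuffling happens on the $\mathcal X$-factor. This gives both unitor identities. The heart of the argument is the associativity check, which I would do by direct calculation. Unwinding the two compositions on elements $((x,y),z)$ and $((x_i,y_i),z_i)$ produces, on the $\mathcal X$-coordinate, a reshuffling of the $x_i$'s by the permutation $\rho_{\mathcal Y}(y)^{-1}\circ\rho_{\mathcal Z}(z)^{-1}$ before applying $\gamma_{\mathcal X}$; on the other side, the same coordinate uses the permutation $(\rho_{\mathcal Y}(y)\cdot\rho_{\mathcal Z}(z))^{-1}$. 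These agree because inversion reverses multiplication in the group $\mathfrak S(n)$. The $\mathcal Y$-coordinate on both sides is $\gamma_{\mathcal Y}(y;y_{\rho_{\mathcal Z}(z)^{-1}(\cdot)})$, and the $\mathcal Z$-coordinate is $\gamma_{\mathcal Z}(z;z_1,\dots,z_n)$, so all three components match.

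The main obstacle is not conceptual but notational: carrying out the above associativity check requires care in tracking which permutation comes from which structure map and in which order they are composed. The key identity that makes everything work is the anti-homomorphism property of inversion in $\mathfrak S(n)$ together with the operad axiom for $\mathfrak S$ that lies behind \cref{lem:operS-prod}; once these are lined up, all the data assembles canonically.
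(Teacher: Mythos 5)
Your proposal is correct and follows essentially the same route as the paper: identify the two bracketings level-wise with the triple product and check that both composition formulas reshuffle the $\mathcal X$-indices by $(\rho_{\mathcal Y}(y)\rho_{\mathcal Z}(z))^{-1}$, the key point being the anti-homomorphism property of inversion in $\mathfrak S(n)$ together with \cref{lem:operS-prod}. One slip in your write-up of the associativity check: unwinding $(\mathcal X\rtimes\mathcal Y)\rtimes\mathcal Z$ puts $x_{\rho_{\mathcal Z}(z)^{-1}\rho_{\mathcal Y}(y)^{-1}(i)}$ (not $x_{\rho_{\mathcal Y}(y)^{-1}\rho_{\mathcal Z}(z)^{-1}(i)}$) in the $i$-th slot, since the outer reshuffle by $\rho_{\mathcal Z}(z)^{-1}$ is applied to the subscripts first; it is in this order that the expression equals $(\rho_{\mathcal Y}(y)\rho_{\mathcal Z}(z))^{-1}$, exactly as your invocation of the anti-homomorphism requires.
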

\begin{proof}
Since the last statement is obvious, we have to give an associativity isomorphism.
It suffices to show that, for operads $\mathcal X$, $\mathcal Y$, and $\mathcal Z$ over $\mathfrak S$, the identification
\[
((\mathcal X\rtimes\mathcal Y)\rtimes\mathcal Z)(n)
= \mathcal X(n)\times\mathcal Y(n)\times\mathcal Z(n)
= (\mathcal X\rtimes(\mathcal Y\rtimes\mathcal Z))(n)
\]
is an isomorphism $(\mathcal X\rtimes\mathcal Y)\rtimes\mathcal Z\cong\mathcal X\rtimes(\mathcal Y\rtimes\mathcal Z)$.
Actually, in either case, the composition operation is given by
\[
\begin{multlined}
\gamma((x,y,z);(x_1,y_1,z_1),\dots,(x_n,y_n,z_n)) \\
= \bigl(\gamma_{\mathcal X}(x;x_{\pi(z)^{-1}\rho(y)^{-1}(1)},\dots,x_{\pi(z)^{-1}\rho(y)^{-1}(n)}),
\gamma_{\mathcal Y}(y;y_{\pi(z)^{-1}(1)},\dots,y_{\pi(z)^{-1}(n)}), \\
\gamma_{\mathcal Z}(z;z_1,\dots,z_n)\bigr)\ ,
\end{multlined}
\]
where $\rho:\mathcal Y\to\mathfrak S$ and $\pi:\mathcal Z\to\mathfrak S$ are the structure map.
\end{proof}

\begin{definition}
A \emph{monoid operad} is a monoid object in the category $\mathbf{Op}^{/\mathfrak S}$ with respect to the monoidal structure $\rtimes$.
\end{definition}

We denote by $\mathbf{MonOp}$ the category of monoid operads and monoid homomorphisms in $\mathbf{Op}^{/\mathfrak S}$.
Note that a monoid operad $\mathcal X$ consists of an operad together with data
\begin{itemize}
  \item a monoid structure on each $\mathcal X(n)$, and
  \item a map $\mathcal X\to\mathfrak S$ of operads so that $\mathcal X(n)\to\mathfrak S$ is a monoid homomorphism;
\end{itemize}
which satisfy appropriate conditions.
Comparing it with the definition of group operads, one may notice that a group operad $\mathcal G$ determines a monoid operad and that it gives rise to a functor $\mathbf{GrpOp}\to\mathbf{MonOp}$.
The following result is an easy exercise.

\begin{proposition}
\label{prop:grpop->monop}
The functor $\mathbf{GrpOp}\to\mathbf{MonOp}$ is fully faithful.
Moreover, a monoid operad $\mathcal X$ belongs to the essential image if and only if for each $n\in\mathbb N$, the monoid $\mathcal X(n)$ is a group.
\end{proposition}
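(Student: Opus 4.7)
The plan is to translate the definition of a monoid object in $(\mathbf{Op}^{/\mathfrak S}, \rtimes)$ into level-wise data and then match it against the axioms of a group operad. A monoid operad $\mathcal X$ with multiplication $\mu:\mathcal X\rtimes\mathcal X\to\mathcal X$ and unit $\eta:\ast\to\mathcal X$ provides, at each $n$, a monoid operation on $\mathcal X(n)$ together with a unit element $e_n\in\mathcal X(n)$. The key identification, using the formula for the composition in $\mathcal X\rtimes\mathcal X$, is that compatibility of $\mu$ with the operad composition is precisely the interchange identity
\[
\gamma_{\mathcal X}(xy;x_1y_1,\dots,x_ny_n) = \gamma_{\mathcal X}(x;x_{\rho(y)^{-1}(1)},\dots,x_{\rho(y)^{-1}(n)})\cdot\gamma_{\mathcal X}(y;y_1,\dots,y_n),
\]
namely \eqref{eq:grpop-interchange}, where $\rho:\mathcal X\to\mathfrak S$ is the structure map. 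Meanwhile, compatibility of $\mu$ with the structure map to $\mathfrak S$ (through \cref{lem:operS-prod}) asks exactly that each $\rho_n:\mathcal X(n)\to\mathfrak S(n)$ is a monoid homomorphism, and the unit $\eta$ encodes the compatibility of \cref{prop:grpop-unit}\ref{req:grpop-unit:comp}.

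With this dictionary in hand, I will proceed in three steps. First, I check that a group operad $\mathcal G$ produces a monoid operad: the level-wise group multiplications assemble into $\mu$ by \eqref{eq:grpop-interchange}, and the unit system $\{e_n\}$ supplied by \cref{prop:grpop-unit}\ref{req:grpop-unit:comp} assembles into $\eta$; associativity and the unit axioms then hold level-wise. Second, I verify the functor is fully faithful on morphisms: a morphism of monoid operads between two group operads is an operad map over $\mathfrak S$ that is level-wise a monoid homomorphism, and between groups this is automatically a group homomorphism, which matches the definition of a map of group operads verbatim. Third, I identify the essential image: necessity is immediate from the construction, and conversely, if $\mathcal X$ is a monoid operad with each $\mathcal X(n)$ a group, then $\mathcal X$ already carries every ingredient of a group operad (level-wise groups, a map to $\mathfrak S$ which is now automatically a group homomorphism, and the interchange identity), hence lies in the essential image up to canonical isomorphism.

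The main obstacle I expect is the bookkeeping in the first step: namely, unpacking the equation ``$\mu$ is a morphism in $\mathbf{Op}^{/\mathfrak S}$'' carefully enough to see that the twist by $\rho(y)^{-1}$ coming from the semidirect-product composition in $\mathcal X\rtimes\mathcal X$ lines up with the twist on the right-hand side of \eqref{eq:grpop-interchange}, and that the induced map $\mathcal X\rtimes\mathcal X\to\mathfrak S$ is the pointwise product on $\mathfrak S(n)$ as specified by \cref{lem:operS-prod}. Once that matching is pinned down, associativity, the unit laws, and the compatibility conditions all transfer level-wise without further work.
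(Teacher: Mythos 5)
Your proposal is correct and follows exactly the unpacking that the paper intends (the paper leaves this as "an easy exercise" after spelling out that a monoid operad is an operad over $\mathfrak S$ with level-wise monoid structures satisfying appropriate conditions). The key identifications you make — the compatibility of $\mu$ with $\gamma_{\mathcal X\rtimes\mathcal X}$ being precisely \eqref{eq:grpop-interchange}, the unit $\eta$ corresponding to the system $\{e_n\}$ from \cref{prop:grpop-unit}, and a monoid homomorphism between groups being automatically a group homomorphism — are the right ones and settle all three parts.
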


One of the important features of monoid operads is their actions on multicategories.
The argument begin with the observation that the functor $\rtimes:\mathbf{Op}\times\mathbf{Op}^{/\mathfrak S}\to\mathbf{Op}$ extends to a functor
\begin{equation}
\label{eq:multcat-rtimes}
\rtimes:\mathbf{MultCat}\times\mathbf{Op}^{/\mathfrak S}\to\mathbf{MultCat},
\end{equation}
where $\mathbf{MultCat}$ is the category of (small) multicategories and multifunctors; as for the theory of multicategories, we refer the reader to \cite{Hermida2000} and \cite{Lei04}.
Indeed, for a multicategory $\mathcal M$ and an operad $\rho:\mathcal X\to\mathfrak S$ over $\mathfrak S$, we define a multicategory $\mathcal M\rtimes\mathcal X$ as follows:
\begin{itemize}
  \item objects are those of $\mathcal M$;
  \item for $a_1,\dots,a_n,a\in\mathcal M$, we set
\[
\begin{multlined}
\mathcal M(a_1\dots a_n;a) \\[1ex]
:= \left\{(f,x)\;\middle|\;x\in\mathcal X(n),\, f\in\mathcal M(a_{\rho(x)^{-1}(1)}\dots a_{\rho(x)^{-1}(n)};a)\right\};
\end{multlined}
\]
  \item the composition operation is defined so that
\[
\begin{split}
&\gamma_{\mathcal M\rtimes\mathcal X}((f,x);(f_1,x_1),\dots,(f_n,x_n)) \\
&:= \left(\gamma_{\mathcal M}(f;f_{\rho(x)^{-1}(1)},\dots,f_{\rho(x)^{-1}(n)}),\gamma_{\mathcal X}(x;x_1,\dots,x_n)\right)\ .
\end{split}
\]
\end{itemize}
It is easily checked that the composition makes sense and is associative.
Note that we have a canonical identification
\[
(\mathcal M\rtimes\mathcal X)\rtimes\mathcal Y
=\mathcal M\rtimes(\mathcal X\rtimes\mathcal Y)\ .
\]
Hence, the functor \eqref{eq:multcat-rtimes} exhibits the category $\mathbf{MultCat}$ as a right $\mathbf{Op}^{/\mathfrak S}$-module with respect to the monoidal structure $\rtimes$ on $\mathbf{Op}^{/\mathfrak S}$.
As a consequence, if $\mathcal X$ is a monoid operad, it gives rise to a functor
\[
(\blank)\rtimes\mathcal X:\mathbf{MultCat}\to\mathbf{MultCat}\ .
\]
It is actually a $2$-functor: it sends a multinatural transformation $\alpha:F\to G:\mathcal M\to\mathcal N$ to the multinatural transformation consisting of
\begin{equation}
\label{eq:rtX-mulnat}
(\alpha_a,e_1)
\in(\mathcal N\rtimes\mathcal G)(F(a);G(a))
= \mathcal N(F(a);G(a))\times\mathcal G(1)
\end{equation}
for each $a\in\mathcal M$.
Furthermore, it is easily verified that the monoid operad structure on $\mathcal X$ makes the $2$-functor $(\blank)\rtimes\mathcal X$ into a $2$-monad: we have an obvious $2$-natural isomorphism
\[
((\blank)\rtimes\mathcal X)\rtimes\mathcal X
\cong(\blank)\rtimes(\mathcal X\rtimes\mathcal X)
\]
so that we can define two $2$-natural transformations
\[
\begin{gathered}
((\blank)\rtimes\mathcal X)\rtimes\mathcal X
\cong (\blank)\rtimes(\mathcal X\rtimes\mathcal X)
\xrightarrow{\mathrm{Id}\times\mathtt{mul}} (\blank)\rtimes\mathcal X\ ,
\\
\mathrm{Id}
\cong(\blank)\rtimes\ast
\xrightarrow{\mathrm{Id}\times\mathtt{unit}} (\blank)\rtimes\mathcal X\ .
\end{gathered}
\]

\begin{definition}
Let $\mathcal X$ be a monoid operad.
Then, an \emph{$\mathcal X$-symmetric structure} on a multicategory $\mathcal M$ is nothing but a structure of a strict $2$-algebra over the $2$-monad $(\blank)\rtimes\mathcal X$; i.e. a multifunctor
\[
\mathtt{sym}:\mathcal M\rtimes\mathcal X\to\mathcal M
\]
which makes the following diagrams commute:
\begin{equation}
\label{eq:Xsym-cohdiags}
\vcenter{
  \xymatrix@C=4em{
    \mathcal M\rtimes\mathcal X\rtimes\mathcal X \ar[r]^-{\mathrm{Id}\rtimes\mathtt{mul}} \ar[d]_{\mathtt{sym}\rtimes\mathrm{Id}} & \mathcal M\rtimes\mathcal X \ar[d]^{\mathtt{sym}} \\
    \mathcal M\rtimes\mathcal X \ar[r]^{\mathtt{sym}} & \mathcal M }}
\quad,\quad
\vcenter{
  \xymatrix@C=1em{
    \mathcal M\rtimes\ast \ar@{=}[dr] \ar[rr]^{\mathrm{Id}\times\mathtt{unit}} && \mathcal M\rtimes\mathcal X \ar[dl]^{\mathtt{sym}} \\
    & \mathcal M & }}
\quad.
\end{equation}
We say $\mathcal M$ is \emph{$\mathcal X$-symmetric} if it is equipped with an $\mathcal X$-symmetric structure.
\end{definition}

\begin{remark}
In the case $\mathcal X=\mathcal G$ is a group operad, the notion of $\mathcal G$-symmetric multicategory defined above is equivalent to that of $\mathcal G$-multicategories in Definition~5.1 in \cite{Gur15}.
\end{remark}

\begin{example}
If $\mathcal X=\ast$ is the trivial group operad, then $\ast$-symmetric multicategories are just (planer) multicategories.
\end{example}

\begin{example}
In the case $\mathcal X=\mathfrak S$ is the group operad of symmetric groups, $\mathfrak S$-symmetric multicategories are precisely \emph{symmetric multicategories} in the usual sense.
\end{example}

We also consider multifunctors respecting symmetries.

\begin{definition}
Let $\mathcal X$ be a monoid operad, and let $\mathcal M$ and $\mathcal N$ be $\mathcal X$-symmetric multicategory.
Then, a multifunctor $F:\mathcal M\to\mathcal N$ is said to be \emph{$\mathcal X$-symmetric} if it is a morphism of algebras over the $2$-monad $(\blank)\rtimes\mathcal X$; i.e. the following diagram commutes:
\[
\vcenter{
  \xymatrix{
    \mathcal M\rtimes\mathcal X \ar[r]^{F\rtimes\mathrm{Id}} \ar[d]_{\mathtt{sym}} & \mathcal N\rtimes\mathcal X \ar[d]^{\mathtt{sym}} \\
    \mathcal M \ar[r]^F & \mathcal N }}
\quad.
\]
\end{definition}

We denote by $\mathbf{MultCat}_{\mathcal X}$ the $2$-category of $\mathcal X$-symmetric multicategories, $\mathcal X$-symmetric multifunctors, and transformations of morphisms.

Note that we do not provide any special terminologies for $2$-morphisms in $\mathbf{MultCat}_{\mathcal X}$ of in $\mathbf{MonCat}_{\mathcal X}$ because of the following result.

\begin{lemma}
\label{lem:symnat-ff}
Let $\mathcal X$ be a monoid operad.
Then, the forgetful $2$-functor
\[
\mathbf{MultCat}_{\mathcal X}
\to\mathbf{MultCat}
\]
is locally fully faithful.
\end{lemma}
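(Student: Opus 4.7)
The plan is to show that the algebra 2-cell axiom over the 2-monad $(\blank) \rtimes \mathcal X$ is automatically satisfied for every multinatural transformation between $\mathcal X$-symmetric multifunctors. Since the forgetful 2-functor acts as the identity on underlying 2-cells, injectivity on hom-sets is immediate; all the content is in showing that every multinatural transformation $\alpha: F \Rightarrow G$ with $F, G$ both $\mathcal X$-symmetric lifts to a 2-cell in $\mathbf{MultCat}_{\mathcal X}$.

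First I would unpack the lifting condition. A 2-cell of strict algebras $\alpha$ between strict morphisms $F, G$ must satisfy
\[
\mathtt{sym}_{\mathcal N} \cdot (\alpha \rtimes \mathrm{Id}_{\mathcal X}) = \alpha \cdot \mathtt{sym}_{\mathcal M},
\]
an equation of 2-cells between the equal 1-cells $\mathtt{sym}_{\mathcal N} \circ (F \rtimes \mathrm{Id}) = F \circ \mathtt{sym}_{\mathcal M}$, and similarly for $G$. Since every multifunctor in sight is the identity on objects (namely $\mathtt{sym}_{\mathcal M}$, $\mathtt{sym}_{\mathcal N}$, and the natural bijection of objects of $\mathcal M \rtimes \mathcal X$ with those of $\mathcal M$), the equation can be checked componentwise at each object $a \in \mathcal M$.

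Next I would compute the two components. The right-hand side simply gives $\alpha_a \in \mathcal N(F(a); G(a))$, while by \eqref{eq:rtX-mulnat} the left-hand side evaluates to $\mathtt{sym}_{\mathcal N}(\alpha_a, e_1)$, where $e_1 \in \mathcal X(1)$ is the unit element. The crucial observation is that $(\alpha_a, e_1)$ is the image of $\alpha_a \in (\mathcal N \rtimes \ast)(F(a); G(a)) = \mathcal N(F(a); G(a))$ under the canonical map $\mathrm{Id} \rtimes \mathtt{unit}: \mathcal N \rtimes \ast \to \mathcal N \rtimes \mathcal X$. The unit triangle of \eqref{eq:Xsym-cohdiags} then immediately forces $\mathtt{sym}_{\mathcal N}(\alpha_a, e_1) = \alpha_a$, so the two sides coincide.

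The only subtlety is the bookkeeping around $\alpha \rtimes \mathrm{Id}_{\mathcal X}$ and the whiskering $\alpha \cdot \mathtt{sym}_{\mathcal M}$, both of which have to be identified with the correct componentwise formulas on a 1-ary multihom. Once this identification is made, the whole lemma collapses to a single invocation of the unit axiom for $\mathtt{sym}$, and no comparably delicate point appears.
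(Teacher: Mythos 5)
Your proof is correct and follows essentially the same route as the paper: the paper phrases $\mathbf{MultCat}_{\mathcal X}(\mathcal M,\mathcal N)$ as an equalizer of $\mathtt{sym}_\ast\circ(\blank)\rtimes\mathcal X$ and $\mathtt{sym}^\ast$ and notes both are identities on morphisms, which is exactly your computation that \eqref{eq:rtX-mulnat} together with the unit triangle of \eqref{eq:Xsym-cohdiags} makes the 2-cell axiom automatic.
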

\begin{proof}
Take two $\mathcal X$-symmetric multicategory $\mathcal M$ and $\mathcal N$.
Note that the category $\mathbf{MultCat}_{\mathcal X}(\mathcal M,\mathcal N)$ is obtained as the equalizer of the parallel functors
\[
\vcenter{
  \xymatrix@C=5em@1{
    \mathbf{MultCat}(\mathcal M,\mathcal N) \ar@<.15pc>[r]^-{\mathtt{sym}_\ast\circ(\blank)\rtimes\mathcal X} \ar@<-.15pc>[r]_-{\mathtt{sym}^\ast} & \mathbf{MultCat}(\mathcal M\rtimes\mathcal X,\mathcal N) }}
\quad.
\]
Thanks to \eqref{eq:rtX-mulnat} and \eqref{eq:Xsym-cohdiags}, both functors are identities on morphisms, so we get the result.
\end{proof}

\begin{remark}
We are mainly interested in the case $\mathcal X$ is a group operad.
In this case, it was proved in \cite{CG13} that $\mathbf{MultCat}_{\mathcal X}$ is biequivalent to the $2$-category of \emph{pseudo-algebras} over $(\blank)\rtimes\mathcal X$.
\end{remark}

We finally discuss the case a multicategory comes from a monoidal category.
Recall that if $\mathcal C$ is a monoidal category, then we have the associated multicategory $\mathcal C^\otimes$ with the same objects as $\mathcal C$ and
\[
\mathcal C^\otimes(X_1\dots X_n;X)
:= \mathcal C(X_1\otimes\dots\otimes X_n;X)
\]
for $X,X_1,\dots,X_n\in\mathcal C$.
We define an \emph{$\mathcal X$-symmetric structure on $\mathcal C$} to be that on $\mathcal C^\otimes$ and say $\mathcal C$ is $\mathcal X$-symmetric if $\mathcal C^\otimes$ is so.
In this case, for each $x\in\mathcal G(n)$ and for objects $X_1,\dots, X_n\in\mathcal C$, we set
\begin{equation}
\label{eq:Gsym-moncat}
\Theta^x_{X_1\dots X_n}:
X_1\otimes\dots\otimes X_n
\to X_{x^{-1}(1)}\otimes X_{x^{-1}(n)}
\end{equation}
to be the image of the pair $(\mathrm{id},x)$ by the map
\[
\begin{split}
&(\mathcal C^\otimes\rtimes\mathcal X)(X_1\dots X_n;X_{x^{-1}(1)}\otimes\dots\otimes X_{x^{-1}(n)}) \\
&\xrightarrow{\mathtt{sym}} \mathcal C^\otimes(X_1\dots X_n;X_{x^{-1}(1)}\otimes\dots\otimes X_{x^{-1}(n)}) \\
&= \mathcal C(X_1\otimes\dots\otimes X_n;X_{x^{-1}(1)}\otimes\dots\otimes X_{x^{-1}(n)})\ .
\end{split}
\]
It is verified that the family $\Theta^x:=\{\Theta^x_{X_1\dots X_n}\}_{X_1,\dots,X_n}$ forms a natural transformation such that $\Theta^{e_n}=\mathrm{id}$ and $\Theta^x\Theta^y=\Theta^{xy}$.
In particular, if $\mathcal X=\mathfrak S$ (resp. $\mathcal B$), this natural transformation $\Theta^x$ is nothing but the appropriate composition of the \emph{braidings} in the symmetric (resp. braided) structure.
Hence, (resp. $\mathcal B$-symmetric) monoidal categories are nothing but symmetric (resp. braided) monoidal categories.

Let us denote by $\mathbf{MonCat}$ the $2$-category of monoidal categories, monoidal functors, and monoidal natural transformations.
For a monoid operad $\mathcal X$, we define a $2$-category $\mathbf{MonCat}_{\mathcal X}$ to be the pullback
\[
\mathbf{MonCat}_{\mathcal X}
:= \mathbf{MonCat}\times_{\mathbf{MultCat}}\mathbf{MultCat}_{\mathcal X}
\]
and call its $1$-morphisms \emph{$\mathcal G$-symmetric monoidal functors}.

\section{Crossed interval groups}
\label{sec:crsintgrp}

We first recall the definition of crossed groups.

\begin{definition}
Let $\mathcal A$ be a small category.
Then, a crossed $\mathcal A$-group is a presheaf $G$ over $\mathcal A$ equipped with data
\begin{itemize}
  \item a group structure on $G(a)$ for each $a\in\mathcal A$;
  \item a left $G(a)$-action
\[
G(a)\times\mathcal A(b,a)\to\mathcal A(b,a)
\ ;\quad (x,\varphi)\mapsto\varphi^x
\]
for each $a,b\in\mathcal A$;
\end{itemize}
satisfying the following two conditions:
\begin{enumerate}[label={\rm(\roman*)}]
  \item for morphisms $\varphi:b\to a$ and $\psi:c\to b$ in $\mathcal A$, and for $x\in G(a)$,
\[
(\varphi\psi)^x = \varphi^x\psi^{\varphi^\ast(x)}\ ;
\]
  \item for a morphism $\varphi:b\to a$ and for $x,y\in G(a)$,
\[
\varphi^\ast(xy) = (\varphi^y)^\ast(x)\varphi^\ast(y)\ .
\]
\end{enumerate}
\end{definition}

\begin{definition}
Let $\mathcal A$ be a small category.
For two crossed $\mathcal A$-groups $G$ and $H$, a map of crossed $\mathcal A$-groups from $G$ to $H$, written $G\to H$, is a map $f:G\to H$ of presheaves over $\mathcal A$ such that
\begin{enumerate}[label={\rm(\roman*)}]
  \item the map $f:G(a)\to H(a)$ is a group homomorphism for each $a\in\mathcal A$;
  \item for each morphism $\varphi:b\to a\in\mathcal A$, and for each $x\in G(a)$, we have $\varphi^{f(x)}=\varphi^x$.
\end{enumerate}
\end{definition}

\begin{remark}
The notion of crossed groups was first considered in the simplicial case by Fiedorowicz, Loday \cite{FL91}, and Krasauskas \cite{Kra87}.
The original motivation is to generalize the cyclic homologies of algebras.
\end{remark}

\begin{remark}
\label{rem:totcat}
Some authors define crossed $\mathcal A$-groups as extensions of $\mathcal A$ with unique factorizations.
In fact, if $G$ is a crossed $\mathcal A$-group, then we can form a category $\mathcal A_G$ as follows:
\begin{itemize}
  \item objects are the same as $\mathcal A$;
  \item for each $a,b\in\mathcal A$, we set $\mathcal A_G(a,b):=\mathcal A(a,b)\times G(a)$;
  \item the composition is given by the formula
\[
(\varphi,x)\circ(\psi,y) := (\varphi\psi^x,\psi^\ast(x)y)\ .
\]
\end{itemize}
The two conditions on crossed $\mathcal A$-groups above are almost equivalent to saying that the composition in $\mathcal A_G$ is associative.
We call $\mathcal A_G$ the \emph{total category of $G$}.
\end{remark}

Crossed $\mathcal A$-groups and maps of them form a category $\mathbf{CrsGrp}_{\mathcal A}$.
Several categorical properties of $\mathbf{CrsGrp}_{\mathcal A}$ are investigated in \cite{Yoshidalimcolim}.
One of the most important results is the following.

\begin{theorem}[Theorem~2.4 in \cite{Yoshidalimcolim}]
\label{theo:crsgrp-locpres}
For every small category $\mathcal A$, the category $\mathbf{CrsGrp}_{\mathcal A}$ is locally presentable.
In particular, it admits a terminal object.
\end{theorem}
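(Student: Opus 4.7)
The goal is local presentability of $\mathbf{CrsGrp}_{\mathcal A}$; the terminal object then exists automatically since every locally presentable category is complete. My plan is to exhibit $\mathbf{CrsGrp}_{\mathcal A}$ as a full subcategory of a locally presentable ambient category $\mathcal E$ that is closed under limits and filtered colimits, and to invoke the standard criterion that such a subcategory is itself locally presentable (and reflective in $\mathcal E$).

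First, I would construct $\mathcal E$ as the category of \emph{decorated group-presheaves}: objects are pairs $(G, \alpha)$ where $G\colon\mathcal A^{\mathrm{op}}\to\mathbf{Grp}$ is a group-valued presheaf and $\alpha=(\alpha_{b,a})$ is a family of set-theoretic maps $\alpha_{b,a}\colon G(a)\times\mathcal A(b,a)\to\mathcal A(b,a)$, with no further axioms; a morphism $(G,\alpha)\to(H,\beta)$ is a group-presheaf map $f\colon G\to H$ with $\beta(f(x),\varphi)=\alpha(x,\varphi)$ for all $\varphi$ and $x$. Since $\mathcal A$ is small, the action data $\alpha$ can be encoded fiberwise by specifying, for each pair $(\varphi,\psi)\in\mathcal A(b,a)^{2}$, the subset $S_{\varphi,\psi}\subseteq G(a)$ of $x$ with $\varphi^{x}=\psi$, together with the condition that for each fixed $\varphi$ the family $\{S_{\varphi,\psi}\}_{\psi}$ partitions $G(a)$. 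This realises $\mathcal E$ as the category of models for a small mixed sketch (limit cones for the group- and presheaf-axioms, coproduct cocones for the partition conditions), hence as a locally presentable category.

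Next, I would observe that the crossed-group axioms (i) and (ii), together with the unit and associativity of the action, are finitary equational conditions on objects of $\mathcal E$. They are preserved by filtered colimits in $\mathcal E$, which are computed sort-wise and on which $\varphi^{[x]}:=\varphi^{x}$ is well-defined because the transition maps preserve actions. They are also preserved by limits in $\mathcal E$: a limit $\lim_{i}(G_{i},\alpha_{i})$ is computed as the sort-wise limit cut down by the action-compatibility condition $\varphi^{x_{i}}=\varphi^{x_{j}}$ (forced by the fact that the projections must be morphisms in $\mathcal E$), on which the equational axioms persist. Hence $\mathbf{CrsGrp}_{\mathcal A}\hookrightarrow\mathcal E$ is a full subcategory closed under limits and filtered colimits, and the closure criterion yields the conclusion.

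The main technical obstacle is the construction of the ambient $\mathcal E$ as a locally presentable category, because the action data has \emph{fixed} codomains $\mathcal A(b,a)$ rather than variable sorts, which prevents a purely essentially algebraic encoding. The fiberwise partition reformulation above circumvents this but crucially needs both the smallness of $\mathcal A$ (to keep the sketch small) and the use of a mixed, rather than a pure limit, sketch (to express the partition condition). An alternative route, which might be cleaner in execution, would be to realize $\mathbf{CrsGrp}_{\mathcal A}$ directly as the category of Eilenberg--Moore algebras for an accessible monad on $\mathbf{Set}^{\mathrm{Ob}(\mathcal A)}$ obtained from free crossed $\mathcal A$-groups, and then appeal to the theorem that algebras over an accessible monad on a locally presentable category are locally presentable; either route ultimately rests on the smallness of $\mathcal A$.
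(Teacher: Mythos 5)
The theorem is imported from \cite{Yoshidalimcolim} and not reproved in this paper, so your argument can only be judged on its own terms; unfortunately it has a gap at its foundation. Your ambient category $\mathcal E$ of decorated group-presheaves is \emph{not} locally presentable. The inference from ``models of a small mixed sketch'' to ``locally presentable'' is invalid: mixed sketches only guarantee accessibility (it is limit sketches that give local presentability), and here the conclusion genuinely fails. Concretely, whenever some hom-set $\mathcal A(b,a)$ has at least two elements, take two objects of $\mathcal E$ whose underlying group-presheaf is trivial but whose decorations differ, say $\alpha(e,\varphi)=\varphi$ versus $\alpha'(e,\varphi)=\psi_0\neq\varphi$. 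Since a morphism of $\mathcal E$ must preserve the decoration on the nose and every group contains its identity, no object of $\mathcal E$ admits a map to both; hence $\mathcal E$ has no initial object and no product of this pair, so it is neither complete nor cocomplete. The closure criterion you invoke therefore has no locally presentable ambient category to apply to, and ``closed under limits in $\mathcal E$'' is vacuous for diagrams whose limit does not exist in $\mathcal E$.

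The second, related, gap is the limit computation, which is the genuinely delicate point of the theorem. Your candidate limit --- the sortwise limit cut down to tuples $(x_i)$ with $\varphi^{x_i}$ independent of $i$ --- is a levelwise subgroup (this already uses the action axioms), but it is not clear that it is a sub-presheaf: condition (i) of the definition only gives $\varphi^{x_i}\circ\psi^{\varphi^\ast(x_i)}=(\varphi\psi)^{x_i}$, which pins down $\psi^{\varphi^\ast(x_i)}$ only up to the failure of injectivity of postcomposition with $\varphi^{x_i}$, so compatibility of the $x_i$ does not visibly propagate to the $\varphi^\ast(x_i)$. The resolution reflected in \cref{theo:crsint-termWeyl} is to create limits along the forgetful functor to the slice $\mathbf{Set}_{\mathcal A}^{/W_{\mathcal A}}$ over the \emph{terminal} crossed $\mathcal A$-group; but the existence of $W_{\mathcal A}$ is part of the conclusion being proved, so it must be constructed independently first. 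Your alternative route via a monad on $\mathbf{Set}^{\mathrm{Ob}(\mathcal A)}$ founders on the same rock: monadic functors create limits, and the forgetful functor to indexed sets does not create them. A repairable strategy along your lines would be to establish accessibility of $\mathbf{CrsGrp}_{\mathcal A}$ directly by a sketch presentation of the crossed-group axioms themselves (rather than of an axiom-free envelope), to prove \emph{cocompleteness} by showing colimits are computed degreewise in $\mathbf{Grp}$ (extending the actions to free products and quotients via condition (ii)), and then to use that an accessible cocomplete category is locally presentable; completeness, and in particular the terminal object, then comes for free.
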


In this paper, we are particularly interested in the case $\mathcal A$ is the \emph{category $\nabla$ of intervals}, which is described as follows:
\begin{itemize}
  \item objects of $\nabla$ are linearly ordered set of the form
\[
\llangle n\rrangle := \{-\infty,1,\dots,n,\infty\}
\]
for $n\in\mathbb N$;
  \item morphisms are order-preserving maps $\varphi:\llangle m\rrangle\to\llangle n\rrangle$ with $\varphi(\infty)=\infty$ and $\varphi(-\infty)=-\infty$.
\end{itemize}
This case was first investigated by Batanin and Markl \cite{BataninMarkl2014}, and they call objects of $\mathbf{CrsGrp}_\nabla$ \emph{crossed interval groups} so we follow it.

\begin{notation}
As a special treatment in the interval case, for a crossed interval group $G$, we will write $G_n := G(\llangle n\rrangle)$.
Note that it might be confusing in some conventions; some authors use this notation for crossed \emph{simplicial} groups.
Unfortunately, the canonical functors connecting the categories of crossed interval groups and simplicial ones requires the shift of the canonical degrees (see Example~5.5 and 5.6 in \cite{Yoshidalimcolim}).
We will warn the reader when there is a danger of confusion.
\end{notation}

To understand the category $\nabla$, it is convenient to introduce the following notation: for a morphism $\varphi:\llangle m\rrangle\to\llangle n\rrangle\in\nabla$, and for $j\in\llangle n\rrangle$, we put
\begin{equation}
\label{eq:nablamorph-vect}
k^{(\varphi)}_j=
\begin{cases}
\#\varphi^{-1}\{j\}-1 & j=\pm\infty\ , \\
\hfil\#\varphi^{-1}\{j\}\hfil & 1\le j\le n\ .
\end{cases}
\end{equation}
Note that the assignment $\varphi\mapsto \vec k^{(\varphi)}=(k^{(\varphi)}_{-\infty},k^{(\varphi)}_1,\dots,k^{(\varphi)}_n,k^\varphi_\infty)$ gives rise to a bijection
\[
\nabla(\llangle m\rrangle,\llangle n\rrangle)
\cong \left\{\vec k=(k_{-\infty},k_1,\dots,k_n,k_\infty)\;\middle|\; \sum_{j\in\llangle n\rrangle}k_j=m\right\}\ .
\]
We make use of the notation above to introduce a left action of $\mathfrak S_n\times\mathbb Z/2\mathbb Z$ on the set $\nabla(\llangle m\rrangle,\llangle n\rrangle)$ as follows: for $(\sigma;\varepsilon)\in\mathfrak S_n\times\mathbb Z/2\mathbb Z$ and for $\varphi:\llangle m\rrangle\to\llangle n\rrangle\in\nabla$, define $\varphi^{(\sigma;\varepsilon)}:\llangle m\rrangle\to\llangle n\rrangle$ so that
\[
k^{(\varphi^{(\sigma;\varepsilon)})}_{\pm\infty}
= k^{(\varphi)}_{\pm(-1)^\varepsilon\infty}
\ \,\quad
k^{(\varphi^{(\sigma;\varepsilon)})}_j
= k^{(\varphi)}_{\sigma^{-1}(j)}
\]
Notice that, taking $m=1$, one gets a canonical identification $\nabla(\llangle1\rrangle,\llangle n\rrangle)=\llangle n\rrangle$.
In terms of the action above, the permutation $(\sigma;\varepsilon):\llangle n\rrangle\to\llangle n\rrangle$ sends $1\le j\le n$ to $\sigma(j)$ and $\pm\infty$ to $\pm(-1)^\varepsilon\infty$.

There is a typical recipe to construct crossed interval groups.
We begin with a commutative triangle of group homomorphisms below.
\begin{equation}
\label{eq:grptri/Z/2}
\vcenter{
  \xymatrix@C=1em{
    H_0 \ar[rr]^\theta \ar[dr] && H_1 \ar[dl]^{\varepsilon} \\
    & \mathbb Z/2\mathbb Z & }}
\end{equation}
By abuse of notation, we denote the triangle just by the pair $(H_0,H_1)$.
For each $n\in\mathbb N$, define a group $W(H_0,H_1)_n$ by
\[
W(H_0,H_1)_n:=(\mathfrak S_n\ltimes H_1^{\times n})\times H_0\ ,
\]
where $\mathfrak S_n$ acts on $H_1^{\times n}$ from the right as the permutations of indices.
Hence, for elements $(\sigma;x_1,\dots,x_n;u),(\tau;y_1,\dots,y_n;v)\in W(H_0,H_1)_n$, we have
\[
(\tau;y_1,\dots,y_n;v)\cdot(\sigma;x_1,\dots,x_n;u)
=(\tau\sigma;y_{\sigma(1)}x_1,\dots,y_{\sigma(n)}x_n;vu)\ .
\]
To introduce an interval set structure, we write $\beta_n\in\mathfrak S_n$ the order-reversion map on $\langle n\rangle$; i.e. $\beta_n(i)=n-i+1$.
Then, for a map $\varphi:\llangle m\rrangle\to\llangle n\rrangle$, define $\varphi^\ast:W(H_0,H_1)_n\to W(H_0,H_1)_m$ by
\begin{multline}
\label{eq:WH-interval}
\varphi^\ast(\sigma;x_1,\dots,x_n;u) \\
:= (\gamma(\beta_3^u;\beta^u_{k^{(\varphi)}_{-\infty}},\gamma(\sigma;\beta^{x_1}_{k^{(\varphi)}_1},\dots,\beta^{x_n}_{k^{(\varphi)}_n}),\beta^u_{k^{(\varphi)}_\infty});x_{\varphi(1)},\dots,x_{\varphi(m)};u)\ ,
\end{multline}
assuming $x_{-\infty}=x_{\infty}=\theta(u)$, where $\gamma$ is the composition operation in the operad structure of $\mathfrak S$, and $\beta^u_k$ and $\beta^{x_i}_k$ are abbreviations of $\beta^{\varepsilon\theta(u)}_k$ and $\beta^{\varepsilon(x_i)}_k$ respectively.
Fortunately, there is a more conceptual description for the complicated permutation appearing in \eqref{eq:WH-interval}; it is the permutation $\tilde\sigma$ on $\langle m\rangle$ so that the square
\[
\xymatrix@C=3em{
  \langle m\rangle \ar[d]_{\tilde\sigma} \ar@{^(->}[]+R+(1,0);[r] & \llangle m\rrangle \ar[r]^\varphi & \llangle n\rrangle \ar[d]^{(\sigma;u)} \\
  \langle m\rangle \ar@{^(->}[]+R+(1,0);[r] & \llangle m\rrangle \ar[r]^{\varphi^{(\sigma;u)}} & \llangle n\rrangle }
\]
is commutative and that the restrictions
\[
\begin{gathered}
\varphi^{-1}\{j\}
\to(\varphi^{(\sigma;u)})^{-1}\{j\}
\\
\varphi^{-1}\{-\infty,\infty\}\cap\langle m\rangle
\to(\varphi^{(\sigma;u)})^{-1}\{-\infty,\infty\}\cap\langle m\rangle
\end{gathered}
\]
are order-preserving or order-reversing according to $\varepsilon(x_j)$ for $1\le j\le n$ and $\varepsilon\theta(u)$ respectively.
This observation helps one prove the functoriality so as to check $W(H_0,H_1)$ is a presheaf over $\nabla$.

\begin{proposition}
\label{prop:interval-wr}
For every triangle as in \eqref{eq:grptri/Z/2}, the presheaf $W(H_0,H_1)$ over $\nabla$ forms a crossed interval group together with the degreewise group structure and the action on $\nabla(\llangle m\rrangle,\llangle n\rrangle)$ induced by the group homomorphism
\[
W(H_0,H_1)_n
\cong (\mathfrak S_n\ltimes H_1^{\times n})\times H_0
\xrightarrow{\text{proj.}\times\varepsilon\theta} \mathfrak S_n\times\mathbb Z/2\mathbb Z\ .
\]
\end{proposition}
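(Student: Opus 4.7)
The plan is to exploit the conceptual description of the permutation in \eqref{eq:WH-interval} given in the paragraph immediately following it: that permutation, which I will denote $\tilde\sigma$, is uniquely characterised by a commutative square in $\nabla$ together with the requirement that its restrictions to the fibres $\varphi^{-1}\{j\}$ and $\varphi^{-1}\{-\infty,\infty\}\cap\langle m\rangle$ be order-preserving or order-reversing according to prescribed signs. Every verification below then reduces either to stacking two such squares or to invoking uniqueness of $\tilde\sigma$.

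First I would establish the presheaf axioms $\mathrm{id}^\ast=\mathrm{id}$ and $(\varphi\psi)^\ast=\psi^\ast\varphi^\ast$. The former is immediate from \eqref{eq:WH-interval}. For the latter, stack the defining square for $\varphi$ on top of the one for $\psi$ with the sign data inherited from $\varphi^\ast$; since a composite of order-preserving/reversing maps is order-preserving/reversing with the signs multiplied, the resulting diagram is precisely the square characterising $\tilde\sigma$ for $\varphi\psi$, and the $H_1$- and $H_0$-coordinates match tautologically.

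The action of $W(H_0,H_1)_n$ on $\nabla(\llangle m\rrangle,\llangle n\rrangle)$ through the stated projection is a left action because the projection is a group homomorphism. Axiom~(i) of crossed groups, $(\varphi\psi)^x=\varphi^x\psi^{\varphi^\ast(x)}$, factors through this projection and so reduces to the analogous identity in $\mathfrak S_n\times\mathbb Z/2\mathbb Z$, which is a direct comparison of the $\vec k$-vectors associated to both sides.

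The main obstacle is axiom~(ii), $\varphi^\ast(xy)=(\varphi^y)^\ast(x)\varphi^\ast(y)$. Writing $x=(\sigma;\vec x;u)$ and $y=(\tau;\vec y;v)$ so that $xy=(\sigma\tau;x_{\tau(1)}y_1,\dots,x_{\tau(n)}y_n;uv)$, the $H_0$-components agree as $uv$ on both sides; the $H_1$-components are handled by the defining square of $\tilde\tau$ (associated to $y$ and $\varphi$), which on $\langle m\rangle$ gives $\varphi^y(\tilde\tau(k))=\tau(\varphi(k))$ and hence $(xy)_{\varphi(k)}=x_{\varphi^y(\tilde\tau(k))}y_{\varphi(k)}$, matching the product rule in $W(H_0,H_1)_m$. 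The delicate point is the permutation coordinate: stacking the square for $\tilde\tau$ below the one for $\tilde\sigma$ (associated to $x$ and $\varphi^y$) yields a commutative square for $\varphi$ whose upper arrow is $(\sigma\tau;uv)$ and whose fibrewise restrictions are compositions of order-preserving/reversing maps with signs $\varepsilon(x_{\tau(j)})\varepsilon(y_j)=\varepsilon(x_{\tau(j)}y_j)$ and $\varepsilon\theta(u)\varepsilon\theta(v)=\varepsilon\theta(uv)$. By uniqueness, the composite permutation therefore coincides with $\widetilde{\sigma\tau}$ associated to $xy$, which closes axiom~(ii).
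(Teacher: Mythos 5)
Your proposal is correct and follows essentially the same route as the paper: both rely on the conceptual characterization of the permutation in \eqref{eq:WH-interval} via the commutative square with sign-prescribed fibrewise restrictions (stacking/uniqueness of such squares) to get axiom~(ii), and both reduce axiom~(i) to a comparison of the $\vec k$-vectors of the two sides. You simply spell out the steps that the paper compresses into ``one can easily prove'' and ``tedious but not difficult.''
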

\begin{proof}
By virtue of the above characterization of the permutation, one can easily prove the formula
\[
\begin{split}
\varphi^\ast((\tau;\vec y;v)\cdot (\sigma;\vec x;u))
&= (\varphi^{(\sigma;u)})^\ast(\tau;\vec y;v)\cdot \varphi^\ast(\sigma;\vec x;u) \\
&= (\varphi^{(\sigma;\vec x;u)})^\ast(\tau;\vec y;v)\cdot \varphi^\ast(\sigma;\vec x;u)\ .
\end{split}
\]
On the other hand, it is tedious but not difficult to show
\[
k^{((\varphi\psi)^{(\sigma;\vec x;u)})}_j
=\sum_{l\in(\varphi^{(\sigma;u)})^{-1}\{j\}} k^{(\varphi^\ast(\sigma;\vec x;u))}_j
= k^{(\varphi^{(\sigma;\vec x;u)}\psi^{\varphi^\ast(\sigma;\vec x;u)})}_j
\]
for each $j=1,\dots,n,\pm\infty$.
These show $W(H_0,H_1)$ is a crossed interval group.
\end{proof}

\begin{example}
\label{ex:crsint-symgrp}
Take $H_0=H_1=\ast$ the trivial group, one has
\[
W(\ast,\ast)_n
\cong\mathfrak S_n\ .
\]
Hence, there is a crossed interval group whose group on $\llangle n\rrangle$ is the permutation group $\mathfrak S_n$.
We will write $\mathfrak S=W(\ast,\ast)$.
Note that the coincidence of the notation with the operad of symmetric groups will be justified in \cref{sec:emb-ptdop}.
\end{example}

\begin{example}
\label{ex:crsgrp-hypoct}
The group $W(\ast,\mathbb Z/2\mathbb Z)_n$ is called the \emph{$n$-th hyperoctahedral group}, and we will denote the resulting crossed interval group by $\mathfrak H$.
It was proved in \cite{FL91} that the restriction of $\mathfrak H$ to the simplex category $\Delta$ through the functor
\[
\Delta\to\nabla
\ ;\quad [n]\cong\langle n+1\rangle\mapsto \llangle n+1\rrangle
\]
is the terminal object in the category $\mathbf{CrsGrp}_\Delta$ of crossed simplicial groups.
We call $\mathfrak H$ the hyperoctahedral crossed interval group.
\end{example}

In the interval case, we can compute the terminal crossed interval group explicitly.
Thanks to results of the paper \cite{Yoshidalimcolim}, this tell us how to compute limits and colimits in the category $\mathbf{CrsGrp}_\nabla$.

\begin{theorem}
\label{theo:crsint-termWeyl}
The crossed interval group $\mathfrak W_\nabla:= W(\mathbb Z/2\mathbb Z,\mathbb Z/2\mathbb Z)$ is a terminal object in the category $\mathbf{CrsGrp}_\nabla$.
Moreover, the following hold.
\begin{enumerate}[label={\rm(\arabic*)}]
  \item Colimits in $\mathbf{CrsGrp}_\nabla$ are computed degreewisely in the category $\mathbf{Grp}$ of groups.
  \item The forgetful functor $U:\mathbf{CrsGrp}_\nabla\to\mathbf{Set}_\nabla^{/\mathfrak W_\nabla}$ admits a left adjoint and creates arbitrary small limits and filtered colimits.
\end{enumerate}
\end{theorem}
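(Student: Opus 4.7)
The plan is to prove the theorem in three stages. First, for terminality, I would construct a unique map $\Phi:G\to\mathfrak W_\nabla$ from an arbitrary crossed interval group $G$. For $x\in G_n$, the crossed action on $\nabla(\llangle 1\rrangle,\llangle n\rrangle)\cong\llangle n\rrangle$ yields a permutation preserving $\{-\infty,\infty\}$ setwise, recording a pair $(\sigma_x,u_x)\in\mathfrak S_n\times\mathbb Z/2\mathbb Z$. To recover the local signs, for each $1\le j\le n$ let $\iota_j:\llangle 2\rrangle\to\llangle n\rrangle$ be the morphism whose $\vec k$-vector is concentrated at position $j$ with value $2$; then the underlying action of $\iota_j^\ast(x)\in G_2$ on $\llangle 2\rrangle$ either fixes or transposes $\{1,2\}$, yielding a sign $\epsilon_j(x)\in\mathbb Z/2\mathbb Z$. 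Setting $\Phi(x):=(\sigma_x;\epsilon_1(x),\ldots,\epsilon_n(x);u_x)$, uniqueness is immediate, since any map of crossed interval groups into $\mathfrak W_\nabla$ must match the underlying permutation of $x$ on $\llangle n\rrangle$ and on $\llangle 2\rrangle$ after each pullback, hence must coincide with $\Phi$.

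Second, for the claim that colimits are computed degreewise, given a diagram $\{G_i\}_{i\in I}$, I would form $G_n:=\operatorname*{colim}_i(G_i)_n$ in $\mathbf{Grp}$ for each $n$ and promote the result to a crossed interval group. The presheaf structure is obtained by composing each $\varphi^\ast:(G_i)_n\to(G_i)_m$ with the colimit map and invoking the universal property of group colimits; this is well-defined because the transition morphisms in the diagram, being crossed group maps, intertwine every $\varphi^\ast$. The crossed action on morphisms descends because, by terminality, the operation $\varphi\mapsto\varphi^x$ factors through $(G_i)_n\to(\mathfrak W_\nabla)_n$, hence depends only on the colimit class of $x$. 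The crossed group axioms, being equational, survive passage to the colimit, and the universal property in $\mathbf{CrsGrp}_\nabla$ reduces to the degreewise one, since a cocone in $\mathbf{CrsGrp}_\nabla$ is nothing but a compatible family of group-cocones additionally intertwining the $\nabla$-structure.

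Third, for the forgetful functor $U$, local presentability of $\mathbf{CrsGrp}_\nabla$ is provided by \cref{theo:crsgrp-locpres}. The functor $U$ preserves limits (built pointwise by endowing the set-theoretic limit with the componentwise group structure, whose crossed group axioms transfer since they are equational) and, by the preceding step, preserves filtered colimits. Creation of both follows because the equational axioms force the lift of any underlying presheaf-level limit or filtered colimit to a crossed interval group, and existence of a left adjoint follows from the adjoint functor theorem for locally presentable categories.

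The chief obstacle, I expect, is the compatibility in the first stage: showing that $\Phi$ built from pullback along the specific morphisms $\iota_j$ actually commutes with restriction along an arbitrary $\varphi\in\nabla$. The formula \eqref{eq:WH-interval} for $\varphi^\ast$ on $\mathfrak W_\nabla$ involves the intricate permutation $\tilde\sigma$ described via order-preserving and order-reversing fibre restrictions, and one must trace how the locally extracted signs $\epsilon_j(x)$ aggregate into $\tilde\sigma$ under the crossed group identity $\varphi^\ast(xy)=(\varphi^y)^\ast(x)\,\varphi^\ast(y)$. This requires careful bookkeeping with the action of $x$ on fibres of $\varphi$; once that coherence is secured, the remaining assertions are relatively formal.
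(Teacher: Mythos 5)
The paper itself disposes of this theorem by citation (Example~3.17 and Propositions~2.1--2.3 of \cite{Yoshidalimcolim}), so a self-contained argument is a legitimately different route; however, your plan has two genuine gaps. The most serious one is in the terminality claim: you correctly locate the crux --- that the candidate $\Phi(x)=(\sigma_x;\epsilon_1(x),\dots,\epsilon_n(x);u_x)$, assembled from the action on $\nabla(\llangle 1\rrangle,\llangle n\rrangle)$ and from the pullbacks $\iota_j^\ast(x)\in G_2$, must commute with restriction along an \emph{arbitrary} $\varphi$ and be a degreewise homomorphism --- and then you leave exactly that verification undone, labelling it the ``chief obstacle.'' But that verification \emph{is} the theorem: one must show the locally extracted signs reproduce the permutation $\tilde\sigma$ of \eqref{eq:WH-interval} on every fibre of every $\varphi$, that the cocycle identity forces $\Phi(xy)=\Phi(x)\Phi(y)$, and that the sign $u_x$ read off from whether $x$ swaps $\pm\infty$ is coherent with the order-behaviour of $x$ on the fibres over $\pm\infty$. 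Even your preliminary assertion that the $G_n$-action on $\llangle n\rrangle$ preserves $\{\pm\infty\}$ setwise needs an argument (it follows from naturality applied to the factorization of the morphisms $\llangle 1\rrangle\to\llangle n\rrangle$ hitting $\pm\infty$ through $\llangle 0\rrangle$, whose hom-set into $\llangle n\rrangle$ is a singleton). Uniqueness, which you do establish, is the easy half.

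The second gap is in the colimit argument: the step ``composing each $\varphi^\ast:(G_i)_n\to(G_i)_m$ with the colimit map and invoking the universal property of group colimits'' fails as stated, because $\varphi^\ast$ is \emph{not} a group homomorphism --- it obeys the twisted rule $\varphi^\ast(xy)=(\varphi^y)^\ast(x)\varphi^\ast(y)$ --- so you only have a cocone of set maps, and the universal property of $\operatorname*{colim}$ in $\mathbf{Grp}$ (a quotient of a free product) gives no induced map. The standard repair is to package restriction and the action on morphisms into the genuine homomorphism
\[
x\longmapsto\Bigl(\varphi\mapsto(\varphi^{x^{-1}})^\ast(x),\ \varphi\mapsto\varphi^x\Bigr)
\colon (G_i)_n\to \operatorname{Map}\bigl(\nabla(\llangle m\rrangle,\llangle n\rrangle),G_m\bigr)\rtimes\operatorname{Sym}\bigl(\nabla(\llangle m\rrangle,\llangle n\rrangle)\bigr)\ ,
\]
apply the universal property there, and recover $\varphi^\ast$ on the colimit by evaluation; only then do the crossed-group axioms extend from generators to words in the free product. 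Your third stage (creation of limits and filtered colimits, and the adjoint functor theorem) is essentially fine once the first two stages are repaired.
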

\begin{proof}
The result follows from Example~3.17, Proposition~2.1, Corollary~2.2, and Proposition~2.3 in \cite{Yoshidalimcolim}.
\end{proof}

\begin{remark}
\label{rem:crsWeyl-subs}
There are inclusions of crossed interval groups
\[
\mathfrak S
\hookrightarrow \mathfrak H
\hookrightarrow \mathfrak W_\nabla\ .
\]
Actually, there are six crossed interval subgroups of $\mathfrak W_\nabla$, which are all found in the appendix of \cite{Yoshidalimcolim}.
\end{remark}

To establish an embedding of group operads into the category $\mathbf{CrsGrp}_\nabla$, we introduce another aspect of crossed interval groups.
Recall that group operads are defined as monoid objects in terms of a monoidal structure on the category $\mathbf{Op}^{/\mathfrak S}$.
On the other hand, \cref{theo:crsint-termWeyl} implies that we have a forgetful functor $\mathbf{CrsGrp}_\nabla\to\mathbf{Set}_\nabla^{/\mathfrak W_\nabla}$, where the codomain is the slice category of the category $\mathbf{Set}_\nabla$ of interval sets, i.e. presheaves over $\nabla$.
Actually, crossed interval groups are also monoid objects in $\mathbf{Set}_\nabla^{/\mathfrak W_\nabla}$.

\begin{definition}
Let $X$ and $Y$ be two interval sets, and suppose we are given a map $\rho:Y\to\mathfrak W_\nabla$ of interval sets.
Then, we define an interval set $X\rtimes Y$ as follows:
\begin{itemize}
  \item for each $n\in\mathbb N$, we set
\[
(X\rtimes Y)_n := X_n\times Y_n\ ;
\]
  \item for a morphism $\varphi:\llangle m\rrangle\to\llangle n\rrangle\in\nabla$, put
\[
\varphi^\ast:(X\rtimes Y)_n\to (X\rtimes Y_m)
\ ;\quad (x,y) \mapsto ((\varphi^{\rho(y)})^\ast(x),\varphi^\ast(y))\ .
\]
\end{itemize}
\end{definition}

One can check that $X\rtimes Y$ is actually an interval set with the data above.
Moreover, as in the case of operads, the assignment $(X,Y,\rho)\mapsto X\rtimes Y$ gives rise to a functor
\[
\rtimes:\mathbf{Set}_\nabla\times\mathbf{Set}^{/\mathfrak W_\nabla}_\nabla\to\mathbf{Set}_\nabla
\]
with, for $f:X\to X'\in\mathbf{Set}_\nabla$ and $g:Y\to Y'\in\mathbf{Set}^{/\mathfrak W_\nabla}_\nabla$,
\[
f\rtimes g:X\rtimes Y\to X'\rtimes Y'
\ ;\quad (x,y) \mapsto (f(x),g(y))\ .
\]
The following results were proved in Section~4 in \cite{Yoshidalimcolim}.

\begin{lemma}
The degreewise multiplication gives rise to a map
\[
\mathtt{mul}:\mathfrak W_\nabla\rtimes\mathfrak W_\nabla\to\mathfrak W_\nabla
\]
of crossed interval groups.
\end{lemma}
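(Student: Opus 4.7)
The plan is to reduce the lemma to one of the axioms of crossed groups, which is already known to hold for $\mathfrak W_\nabla$ by \cref{prop:interval-wr}. Since $\mathtt{mul}$ is defined level-wise as the multiplication in the group $(\mathfrak W_\nabla)_n$, the only nontrivial thing to check is naturality with respect to morphisms in $\nabla$. The lemma is the direct analogue of \cref{lem:operS-prod} in the operadic setting, and will be used in the same way: to lift the functor $\rtimes:\mathbf{Set}_\nabla\times\mathbf{Set}^{/\mathfrak W_\nabla}_\nabla\to\mathbf{Set}_\nabla$ to a binary operation on the slice category $\mathbf{Set}^{/\mathfrak W_\nabla}_\nabla$.

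Concretely, I would fix a morphism $\varphi:\llangle m\rrangle\to\llangle n\rrangle$ in $\nabla$ and two elements $w,w'\in(\mathfrak W_\nabla)_n$. Since the structure map $\rho:\mathfrak W_\nabla\to\mathfrak W_\nabla$ of the second factor is the identity, the definition of the $\rtimes$ construction yields
\[
\varphi^\ast(w,w')=\bigl((\varphi^{w'})^\ast(w),\,\varphi^\ast(w')\bigr),
\]
and hence $\mathtt{mul}\circ\varphi^\ast(w,w')=(\varphi^{w'})^\ast(w)\cdot\varphi^\ast(w')$. On the other side, $\varphi^\ast\circ\mathtt{mul}(w,w')=\varphi^\ast(ww')$. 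Naturality of $\mathtt{mul}$ therefore amounts to the identity
\[
\varphi^\ast(ww')=(\varphi^{w'})^\ast(w)\cdot\varphi^\ast(w'),
\]
which is precisely condition (ii) in the definition of crossed $\nabla$-groups applied to $\mathfrak W_\nabla$. This condition holds thanks to \cref{prop:interval-wr} with $H_0=H_1=\mathbb Z/2\mathbb Z$.

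There is essentially no obstacle once the crossed group axioms for $W(\mathbb Z/2\mathbb Z,\mathbb Z/2\mathbb Z)$ are in hand: the lemma is a pure reinterpretation in the language of semidirect-product interval sets, and no further computation with the explicit formula \eqref{eq:WH-interval} is required. If anything, one should remember that the same argument shows $\mathtt{mul}$ respects the structure maps into $\mathfrak W_\nabla$, so that it in fact lifts to a morphism in $\mathbf{Set}^{/\mathfrak W_\nabla}_\nabla$, which is what is needed subsequently to define the monoidal structure $\rtimes$ on the slice category.
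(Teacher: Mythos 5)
Your reduction is correct, and it is the natural argument: the paper itself states this lemma without proof (quoting Section~4 of the reference), and the identity you isolate, $\varphi^\ast(ww')=(\varphi^{w'})^\ast(w)\cdot\varphi^\ast(w')$, is precisely the second axiom in the definition of crossed $\nabla$-groups, which $\mathfrak W_\nabla=W(\mathbb Z/2\mathbb Z,\mathbb Z/2\mathbb Z)$ satisfies by \cref{prop:interval-wr}, so no computation with \eqref{eq:WH-interval} is needed. One caveat worth recording: your check establishes that $\mathtt{mul}$ is a map of \emph{interval sets} (which is all that the construction of $\mathtt{mul}_!$ and hence of the monoidal structure actually uses), but for it to be a map of crossed interval \emph{groups} in the sense of the paper's definition the levels $(\mathfrak W_\nabla\rtimes\mathfrak W_\nabla)_n$ must carry the semidirect-product group structure $(w,w')\cdot(v,v')=(w\cdot w'vw'^{-1},\,w'v')$ rather than the direct product, since $\mathfrak W_\nabla$ is nonabelian and degreewise multiplication is not a homomorphism out of a direct product; with that group structure understood, naturality is the only nontrivial verification and your argument supplies it.
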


\begin{proposition}[Proposition~4.4 in \cite{Yoshidalimcolim}]
The composition of functors
\[
\begin{split}
\mathbf{Set}^{/\mathfrak W_\nabla}_\nabla\times\mathbf{Set}^{/\mathfrak W_\nabla}_\nabla
&\cong (\mathbf{Set}_\nabla\times\mathbf{Set}^{/\mathfrak W_\nabla}_\nabla)^{/(\mathfrak W_\nabla,\mathfrak W_\nabla)} \\
&\xrightarrow\rtimes \mathbf{Set}^{/\mathfrak W_\nabla\rtimes\mathfrak W_\nabla}_\nabla \\
&\xrightarrow{\mathtt{mul}_!} \mathbf{Set}^{/\mathfrak W_\nabla}_\nabla
\end{split}
\]
defines a monoidal structure on $\mathbf{Set}^{/\mathfrak W_\nabla}_\nabla$ with the unit object $\ast\to\mathfrak W_\nabla$ corresponding to the unit degreewisely.
\end{proposition}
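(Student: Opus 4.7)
The plan is to mirror the argument of \cref{prop:rtimes-monstr}: make the $\rtimes$-product in $\mathbf{Set}^{/\mathfrak W_\nabla}_\nabla$ explicit, exhibit the associator and unitors as identities under the canonical identifications of underlying interval sets, and then deduce that the coherence axioms hold trivially. First I would unfold the monoidal product. For $(X,\rho_X),(Y,\rho_Y)\in\mathbf{Set}^{/\mathfrak W_\nabla}_\nabla$, the object $X\rtimes Y$ has $(X\rtimes Y)_n=X_n\times Y_n$ with action $\varphi^\ast(x,y)=((\varphi^{\rho_Y(y)})^\ast(x),\varphi^\ast(y))$, and the structure map $(x,y)\mapsto\rho_X(x)\rho_Y(y)$ obtained from the degreewise product on $\mathfrak W_\nabla$.

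For the associativity isomorphism, I would observe that both $((X\rtimes Y)\rtimes Z)_n$ and $(X\rtimes(Y\rtimes Z))_n$ are canonically $X_n\times Y_n\times Z_n$, and compute directly that for any $\varphi:\llangle m\rrangle\to\llangle n\rrangle$ the two actions send $(x,y,z)$ to the common expression
\[
\bigl((\varphi^{\rho_Y(y)\rho_Z(z)})^\ast(x),\;(\varphi^{\rho_Z(z)})^\ast(y),\;\varphi^\ast(z)\bigr),
\]
where the identification invokes the left $\mathfrak W_\nabla$-action identity $(\varphi^a)^b=\varphi^{ba}$ on $\nabla(\llangle m\rrangle,\llangle n\rrangle)$. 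Compatibility with the structure map into $\mathfrak W_\nabla$ then follows from associativity of the degreewise group multiplication. Hence the associator may be taken to be the identity.

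The unitors are straightforward: since the map $\ast\to\mathfrak W_\nabla$ picks out the group unit $e_n\in\mathfrak W_\nabla(n)$ at each level, and since the $\mathfrak W_\nabla$-action on $\nabla$-morphisms is a group action so that $\varphi^{e_n}=\varphi$, both $\ast\rtimes X$ and $X\rtimes\ast$ coincide with $X$ under the canonical projections, compatibly with the structure maps because $e_n$ is the multiplicative unit in $\mathfrak W_\nabla(n)$. With associator and unitors all identities, the pentagon and triangle axioms are then trivially satisfied.

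The main obstacle is really a bookkeeping one rather than a conceptual one: verifying that the action formula for $X\rtimes Y$ really does define a presheaf over $\nabla$, i.e. is compatible with composition of morphisms, reduces to the crossed group identity $(\varphi\psi)^x=\varphi^x\psi^{\varphi^\ast(x)}$ applied to $x=\rho_Y(y)\in\mathfrak W_\nabla$. This, however, is built into the crossed interval group structure on $\mathfrak W_\nabla$ established in \cref{prop:interval-wr} and \cref{theo:crsint-termWeyl}, so requires no additional work beyond careful tracking of the formulas.
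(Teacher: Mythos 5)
Your proposal is correct and follows essentially the same route as the paper: the paper itself only cites this as Proposition~4.4 of \cite{Yoshidalimcolim}, but its in-text proof of the operad analogue (\cref{prop:rtimes-monstr}) proceeds exactly as you do, by identifying both triple products with $X_n\times Y_n\times Z_n$ and checking the two induced actions agree, which here comes down to the left-action identity $(\varphi^a)^b=\varphi^{ba}$ together with the crossed-group axioms for $\mathfrak W_\nabla$. Your verification of the unit object and of presheaf functoriality via $(\varphi\psi)^x=\varphi^x\psi^{\varphi^\ast(x)}$ is likewise the intended argument.
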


We call a monoid object in $\mathbf{Set}^{/\mathfrak W}_\nabla$ with respect to the monoidal structure $\rtimes$ a \emph{crossed interval monoid}.
We denote by $\mathbf{CrsMon}_\nabla$ the category of crossed interval monoids and homomorphisms.
By the definition of the functor $\rtimes$, a crossed interval monoid consists of an interval set $G$ together with data
\begin{itemize}
  \item a monoid structure on $G_n$ for each $n\in\mathbb N$;
  \item a map $G\to\mathfrak W_\nabla$ of interval sets with $G_n\to(\mathfrak W_\nabla)_n$ being a monoid homomorphism, which endows $\nabla(\llangle m\rrangle,\llangle n\rrangle)$ with a left $G_n$-action;
\end{itemize}
satisfying certain conditions.
Comparing it with the definition of crossed interval groups, one obtains the following results.

\begin{lemma}[Lemma~4.8 in \cite{Yoshidalimcolim}]
\label{lem:crsgrp-monobj}
A crossed interval group $G$ determines a crossed interval monoid.
Moreover, it gives rise to a functor $\mathbf{CrsGrp}\to\mathbf{CrsMon}$.
\end{lemma}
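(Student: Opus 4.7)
The plan is to exhibit the monoid-object data on $G$ directly, and to read the two axioms of a crossed interval group as precisely the coherence conditions needed for the multiplication and unit to live in $\mathbf{Set}^{/\mathfrak W_\nabla}_\nabla$.

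First I would produce the structure map $\rho:G\to\mathfrak W_\nabla$. Since $\mathfrak W_\nabla$ is terminal in $\mathbf{CrsGrp}_\nabla$ by \cref{theo:crsint-termWeyl}, there is a unique such map, and by construction it is a morphism of interval sets with each $\rho_n:G_n\to(\mathfrak W_\nabla)_n$ a group homomorphism. The $G_n$-action on $\nabla(\llangle m\rrangle,\llangle n\rrangle)$ prescribed by the definition of a crossed interval group then factors through $\rho_n$ composed with the action of $(\mathfrak W_\nabla)_n$, so the first crossed-group axiom $(\varphi\psi)^x=\varphi^x\psi^{\varphi^\ast(x)}$ for $G$ is automatically inherited from the same axiom for $\mathfrak W_\nabla$ (and is already part of the data governing how $\rtimes$ is built).

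Next I would verify that the degreewise multiplication
\[
\mathtt{mul}:G\rtimes G\to G
\ ;\quad (x,y)\mapsto xy
\]
is a map of interval sets over $\mathfrak W_\nabla$. By definition of $\rtimes$, the interval structure on $G\rtimes G$ sends $\varphi:\llangle m\rrangle\to\llangle n\rrangle$ and $(x,y)\in G_n\times G_n$ to $((\varphi^{\rho(y)})^\ast(x),\varphi^\ast(y))$. Since $\varphi^{\rho(y)}=\varphi^y$ by the construction of $\rho$, what has to be checked is
\[
\varphi^\ast(xy) = (\varphi^y)^\ast(x)\cdot\varphi^\ast(y)\ ,
\]
and this is exactly the second crossed interval group axiom for $G$. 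Compatibility with the projection to $\mathfrak W_\nabla$ is immediate from the fact that $\rho_n$ is a group homomorphism. Similarly the unit $\ast\to G$ sending the unique point in degree $n$ to $e_n\in G_n$ is a map of interval sets because $\varphi^\ast(e_n)=e_m$ — and this equation follows by setting $x=y=e_n$ in the above identity and cancelling in the group $G_m$. The associativity and unit diagrams for the monoid structure in $(\mathbf{Set}_\nabla^{/\mathfrak W_\nabla},\rtimes)$ unwind, degree by degree, into the associativity and unit axioms of the groups $G_n$, so they hold tautologically.

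Finally, for functoriality, a map $f:G\to H$ of crossed interval groups is degreewise a group homomorphism and satisfies $\rho_H\circ f=\rho_G$ (again by terminality of $\mathfrak W_\nabla$), hence commutes with the multiplications and units defined above, giving a morphism of crossed interval monoids. Composition and identities are obviously preserved. The main subtlety, and what I would flag as the heart of the argument, is the translation of the second crossed group axiom into the condition that $\mathtt{mul}$ lives over $\mathfrak W_\nabla$ — everything else is essentially bookkeeping with the definition of $\rtimes$ and the terminal property of $\mathfrak W_\nabla$.
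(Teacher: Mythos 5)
The paper does not prove this lemma itself but cites it from \cite{Yoshidalimcolim}; your argument is the natural definitional unwinding and it is correct. You rightly identify the one substantive point — that the second crossed-group axiom $\varphi^\ast(xy)=(\varphi^y)^\ast(x)\varphi^\ast(y)$ is precisely the statement that degreewise multiplication is a morphism of interval sets $G\rtimes G\to G$ — and the remaining steps (obtaining $\rho$ from terminality of $\mathfrak W_\nabla$, deducing $\varphi^\ast(e_n)=e_m$ by cancellation, and reducing the monoid-object diagrams and functoriality to degreewise group axioms) are all sound.
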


\begin{proposition}[Proposition~4.13 in \cite{Yoshidalimcolim}]
\label{prop:crsgrp-grpobj}
The functor $\mathbf{CrsGrp}\to\mathbf{CrsMon}$ is fully faithful.
Moreover, a crossed monoid $M$ belongs to the essential image if and only if the monoid $M_n$ is a group for each $n\in\mathbb N$.
\end{proposition}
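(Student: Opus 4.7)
The key observation driving both claims is that the monoidal structure $\rtimes$ on $\mathbf{Set}^{/\mathfrak W_\nabla}_\nabla$ is engineered precisely so that a monoid structure on $G\to\mathfrak W_\nabla$ encodes the crossed interval group axioms. I would begin by unpacking what a crossed interval monoid amounts to: an interval set $G$, a structure map $\rho:G\to\mathfrak W_\nabla$ which is degreewise a monoid homomorphism, and a multiplication $m:G\rtimes G\to G$ over $\mathfrak W_\nabla$. The twisted structure maps of $G\rtimes G$ read $\varphi^\ast(x,y)=((\varphi^{\rho(y)})^\ast(x),\varphi^\ast(y))$, so requiring $m$ to be a map of presheaves becomes the identity $\varphi^\ast(xy)=(\varphi^{\rho(y)})^\ast(x)\,\varphi^\ast(y)$. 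Defining a $G_n$-action on $\nabla(\llangle m\rrangle,\llangle n\rrangle)$ by pulling back the canonical $\mathfrak W_{\nabla,n}$-action along $\rho$, i.e.\ $\varphi^x:=\varphi^{\rho(x)}$, this is exactly the second axiom of a crossed interval group, while the first axiom $(\varphi\psi)^x=\varphi^x\psi^{\varphi^\ast(x)}$ follows from the functoriality of $\rho$ together with the corresponding identity that already holds in $\mathfrak W_\nabla$.

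For fully faithfulness, I would exploit that $\mathfrak W_\nabla$ is terminal in $\mathbf{CrsGrp}_\nabla$ by \cref{theo:crsint-termWeyl}, so that the structure map $\rho:G\to\mathfrak W_\nabla$ attached to a crossed interval group $G$ by \cref{lem:crsgrp-monobj} is the unique one. For $G,H\in\mathbf{CrsGrp}_\nabla$, a crossed monoid homomorphism $f:G\to H$ is then a degreewise monoid map over $\mathfrak W_\nabla$: the condition of being over $\mathfrak W_\nabla$ is automatic from terminality, the monoid homomorphism requirement coincides with the group homomorphism requirement because every $G_n$ and $H_n$ is a group, and the identity $\rho_H\circ f=\rho_G$ forces $\varphi^{f(x)}=\varphi^x$ under the dictionary $\varphi^x=\varphi^{\rho(x)}$. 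Thus $f$ is precisely the data of a map of crossed interval groups, and conversely every such map is a crossed monoid homomorphism.

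For the essential image, necessity is immediate. For sufficiency, given a crossed interval monoid $M$ with each $M_n$ a group, the analysis in the first paragraph shows that $\varphi^x:=\varphi^{\rho(x)}$ endows $M$ with all the data of a crossed interval group, and the two axioms follow from the crossed monoid axioms exactly as sketched; the hypothesis that each $M_n$ be a group is used to ensure $\varphi^{(\blank)}$ is an honest group action. The main obstacle I anticipate is purely bookkeeping: the formula \eqref{eq:WH-interval} together with the definition of $\rtimes$ and of the $(\sigma;\varepsilon)$-action on $\nabla(\llangle m\rrangle,\llangle n\rrangle)$ involve several nested permutations and order-reversals, and one has to check that these fit together on the nose with the twisted composition defining $\rtimes$. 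Once the dictionary is laid out explicitly, no essentially new ingredient beyond \cref{theo:crsint-termWeyl} and the explicit formula for $\rtimes$ is required.
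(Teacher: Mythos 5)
Your proposal is correct: the dictionary $\varphi^x:=\varphi^{\rho(x)}$ does translate the presheaf condition on $\mathtt{mul}:G\rtimes G\to G$ into the second crossed-group axiom, the first axiom follows from the corresponding identity in $\mathfrak W_\nabla$ plus $\rho$ being a presheaf map, and terminality of $\mathfrak W_\nabla$ in $\mathbf{CrsGrp}_\nabla$ handles both the over-$\mathfrak W_\nabla$ condition in the fully-faithfulness argument and the identification of the given $\rho$ with the canonical structure map when reconstructing a crossed interval group from a levelwise-group crossed monoid. The paper itself does not prove this statement but imports it as Proposition~4.13 of \cite{Yoshidalimcolim}, and your unpacking is exactly the expected argument.
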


\begin{remark}
\label{rem:restrict-rtimes}
In the paper \cite{Yoshidalimcolim}, we considered more general situation; a monoidal structure $\rtimes_G$ was introduced on the category $\mathbf{Set}^{/G}_\nabla$ for arbitrary crossed interval group $G$.
The monoidal structure $\rtimes$ is recovered with $G=\mathfrak W_\nabla$.
It was proved that there is an equivalence
\[
\mathbf{CrsMon}^{/G}_\nabla
\cong\mathbf{Mon}(\mathbf{Set}^{/G}_\nabla,\rtimes_G)
\]
of categories, where the right hand side is the category of monoid objects.
In particular, if $G$ is a crossed interval subgroup of $\mathfrak W_\nabla$, so $\mathbf{Set}^{/G}_\nabla$ can be seen as a subcategory of $\mathbf{Set}^{/\mathfrak W_\nabla}_\nabla$, then the monoidal structure $\rtimes_G$ agrees with the restriction of $\rtimes$.
\end{remark}

\section{The embedding of pointed operads}
\label{sec:emb-ptdop}

In the previous sections, we prepared two notions of group operads and of crossed interval groups.
Comparing the definition of group operads with \cref{prop:crsgrp-grpobj}, the reader may have a feeling that they can be translated to one another.
The goal of this section is to make it clearer and to establish a fully faithful embedding $\mathbf{GrpOp}\hookrightarrow\mathbf{CrsGrp}_\nabla$.

We denote by $\mathbf{Op}^{\ast/}$ the category of pointed operads; i.e. the coslice category, or the under category, on the trivial operad $\ast$.
Since the set $\ast(n)$ is a singleton for each $n\in\mathbb N$, giving a map $\ast\to\mathcal X$ of operads is equivalent to giving a family $\{e_n\}_n$ of elements $e_n\in\mathcal X(n)$ satisfying
\begin{equation}
\label{eq:ptdoper-basept}
\gamma(e_n;e_{k_1},\dots,e_{k_n}) = e_{k_1+\dots+k_n}\ .
\end{equation}
We first define a functor $\Psi:\mathbf{Op}^{\ast/}\to\mathbf{Set}_\nabla$ as follows:
recall that morphisms $\llangle m\rrangle\to\llangle n\rrangle\in\nabla$ correspond in one to one to partitions $\vec k=(k_{-\infty},k_1,\dots,k_n,k_\infty)$ of $m$ into $(n+2)$ non-negative integers through the formula \eqref{eq:nablamorph-vect}.
To simplify the notation, for a pointed operad $\mathcal X$ with base points $e_n\in\mathcal X(n)$, if $\vec k^{(\varphi)}$ is the partition corresponding to $\varphi:\llangle m\rrangle\to\llangle n\rrangle\in\nabla$, then we write $e^\varphi_j:=e_{k^{(\varphi)}_j}$ for each $j\in\llangle n\rrangle$.
In this case, we define an interval set $\Psi(\mathcal X)$ by
\begin{itemize}
  \item for each $n\in\mathbb N$, $\Psi(\mathcal X)_n:=\mathcal X(n)$;
  \item for a morphism $\varphi:\llangle m\rrangle\to\llangle n\rrangle\in\nabla$, we set
\begin{equation}
\label{eq:defPsi}
\varphi^\ast:\Psi(\mathcal X)_n\to \Psi(\mathcal X)_m
\ ;\quad x\mapsto \gamma(e_3;e^\varphi_{-\infty},\gamma(x;e^\varphi_1,\dots,e^\varphi_n),e^\varphi_\infty) .
\end{equation}
\end{itemize}
Note that, by virtue of the equation \eqref{eq:ptdoper-basept}, for morphisms $\varphi:\llangle m\rrangle\to\llangle n\rrangle$ and $\psi:\llangle l\rrangle\to\llangle m\rrangle$ in $\nabla$, if $\varphi^{-1}\{j\}\setminus\{\pm\infty\}=\{i_1<\dots<i_{k^{(\varphi)}_j}\}\subset\llangle m\rrangle$, we have
\[
e^{\varphi\psi}_j = 
\begin{cases}
\gamma(e_2;e^\psi_{-\infty},\gamma(e^\varphi_{-\infty};e^\psi_{i_1},\dots,e^\psi_{i_{k^{(\varphi)}_j}})) & j=-\infty\ ,
\\
\gamma(e^\varphi_j;e^\psi_{i_1},\dots,e^\psi_{i_{k^{(\varphi)}_j}}) & 1\le j\le n\ ,
\\
\gamma(e_2;\gamma(e^\varphi_\infty;e^\psi_{i_1},\dots,e^\psi_{i_{k^{(\varphi)}_j}}),e^\psi_\infty) & j=\infty\ .
\end{cases}
\]
This and the associativity of the compositions in operads imply $\psi^\ast\varphi^\ast=(\varphi\psi)^\ast$ so that $\Psi(\mathcal X)$ is in fact an interval set.
On the other hand, if $f:\mathcal X\to\mathcal Y$ is a map of pointed operads, so we have $f(e_n)=e_n$, then the maps
\[
\Psi(f):\Psi(\mathcal X)_n\to\Psi(\mathcal Y)_n
\ ;\quad x\mapsto f(x)
\]
clearly define a map $\Psi(f):\Psi(\mathcal X)\to\Psi(\mathcal Y)$ of interval sets.
The functoriality is obvious so that we obtain a functor $\Psi:\mathbf{Op}^{\ast/}\to\mathbf{Set}_\nabla$.

\begin{example}
\label{ex:opsymgrp-ptd}
Note that every group operad is by definition pointed.
In particular, the operad $\mathfrak S$ canonically admits the map $\ast\to\mathfrak S$ corresponding to the unit of each $\mathfrak S(n)$.
The interval set $\Psi(\mathfrak S)$ is isomorphic to the interval set $\mathfrak S$ given in \cref{ex:crsint-symgrp}.
\end{example}

Thinking of $\mathfrak S$ as a pointed object in the category $\mathbf{Op}$, we can take the slice category $(\mathbf{Op}^{\ast/})^{/\mathfrak S}$ on $\mathfrak S$.
Then, one can observe that the monoidal structure $\rtimes$ on $\mathbf{Op}^{/\mathfrak S}$ lifts to $(\mathbf{Op}^{\ast/})^{/\mathfrak S}$.
Indeed, notice that there is an isomorphism $(\mathbf{Op}^{\ast/})^{/\mathfrak S}\cong(\mathbf{Op}^{/\mathfrak S})^{\ast/}$ so the monoidal structure $\rtimes$ induces a functor
\[
\begin{multlined}
(\mathbf{Op}^{\ast/})^{/\mathfrak S}\times(\mathbf{Op}^{\ast/})^{/\mathfrak S}
\cong (\mathbf{Op}^{/\mathfrak S})^{\ast/}\times (\mathbf{Op}^{/\mathfrak S})^{\ast/} \\
\xrightarrow\rtimes (\mathbf{Op}^{/\mathfrak S})^{(\ast\rtimes\ast)/}
\cong (\mathbf{Op}^{/\mathfrak S})^{\ast/}
\cong (\mathbf{Op}^{\ast/})^{/\mathfrak S}\ .
\end{multlined}
\]
It is easily verified this defines a monoidal structure on $(\mathbf{Op}^{\ast/})^{\mathfrak S/}$ so that the functor $(\mathbf{Op}^{\ast/})^{/\mathfrak S}\to\mathbf{Op}^{/\mathfrak S}$ is strictly monoidal.

\begin{lemma}
\label{lem:Psi-monoidal}
The functor
\[
\Psi^{\mathfrak S}:(\mathbf{Op}^{\ast/})^{/\mathfrak S}
\to \mathbf{Set}^{/\Psi(\mathfrak S)}_\nabla = \mathbf{Set}^{/\mathfrak S}_\nabla
\]
induced by the functor $\Psi$ defined above is strictly monoidal (see \cref{rem:restrict-rtimes} for the monoidal structure on $\mathbf{Set}^{/\mathfrak S}_\nabla$).
\end{lemma}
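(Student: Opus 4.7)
The plan is to verify strict monoidality directly, by checking that $\Psi^{\mathfrak S}$ sends the unit to the unit and satisfies $\Psi^{\mathfrak S}(\mathcal X\rtimes\mathcal Y)=\Psi^{\mathfrak S}(\mathcal X)\rtimes\Psi^{\mathfrak S}(\mathcal Y)$ literally as interval sets over $\mathfrak S$; coherence will then be automatic because both $\rtimes$-constructions are strictly associative with underlying product structure at every level.

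First, for the unit: $\Psi(\ast)_n=\ast(n)$ is a one-element set for every $n$, so $\Psi(\ast)$ is the terminal interval set, whose map to $\Psi(\mathfrak S)=\mathfrak S$ of \cref{ex:opsymgrp-ptd} is the evident one. This is precisely the unit object of $\rtimes$ on $\mathbf{Set}^{/\mathfrak S}_\nabla$.

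Next, fix pointed operads $\rho_{\mathcal X}\colon\mathcal X\to\mathfrak S$ and $\rho_{\mathcal Y}\colon\mathcal Y\to\mathfrak S$. At level $n$ both $\Psi(\mathcal X\rtimes\mathcal Y)_n$ and $(\Psi(\mathcal X)\rtimes\Psi(\mathcal Y))_n$ are the set $\mathcal X(n)\times\mathcal Y(n)$, and in both cases the structure map to $\mathfrak S(n)$ sends $(x,y)$ to $\rho_{\mathcal X}(x)\rho_{\mathcal Y}(y)$. The heart of the proof is to show that the two interval actions agree. I would fix $\varphi\colon\llangle m\rrangle\to\llangle n\rrangle$ and compute $\varphi^\ast(x,y)$ on both sides, using that the base points of $\mathcal X\rtimes\mathcal Y$ are componentwise, $e^{\mathcal X\rtimes\mathcal Y}_k=(e^{\mathcal X}_k,e^{\mathcal Y}_k)$. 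Applying the definition of $\gamma_{\mathcal X\rtimes\mathcal Y}$ to \eqref{eq:defPsi}, the $\rho_{\mathcal Y}(y)$-twist of the $\mathcal X$-indices in the semidirect composition yields the pair $(A,B)$ with
\[
A=\gamma_{\mathcal X}\bigl(e_3^{\mathcal X};e^{\mathcal X,\varphi}_{-\infty},\gamma_{\mathcal X}(x;e^{\mathcal X,\varphi}_{\sigma^{-1}(1)},\dots,e^{\mathcal X,\varphi}_{\sigma^{-1}(n)}),e^{\mathcal X,\varphi}_\infty\bigr),
\]
$\sigma=\rho_{\mathcal Y}(y)$, and $B=\varphi^\ast_{\Psi(\mathcal Y)}(y)$. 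On the other side, the definition of $\rtimes$ for interval sets gives $((\varphi^\sigma)^\ast_{\Psi(\mathcal X)}(x),\varphi^\ast_{\Psi(\mathcal Y)}(y))$, whose first coordinate expands via \eqref{eq:defPsi} to $\gamma_{\mathcal X}(e_3^{\mathcal X};e^{\mathcal X,\varphi^\sigma}_{-\infty},\gamma_{\mathcal X}(x;e^{\mathcal X,\varphi^\sigma}_1,\dots,e^{\mathcal X,\varphi^\sigma}_n),e^{\mathcal X,\varphi^\sigma}_\infty)$. These two expressions agree iff $k^{(\varphi^\sigma)}_i=k^{(\varphi)}_{\sigma^{-1}(i)}$ for $1\le i\le n$ and $k^{(\varphi^\sigma)}_{\pm\infty}=k^{(\varphi)}_{\pm\infty}$, which is exactly the defining formula for the $\mathfrak S_n\subset\mathfrak S_n\times\mathbb Z/2\mathbb Z$-action on $\nabla(\llangle m\rrangle,\llangle n\rrangle)$.

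Finally, functoriality of $\Psi$ on morphisms makes $\Psi^{\mathfrak S}(f\rtimes g)=\Psi^{\mathfrak S}(f)\rtimes\Psi^{\mathfrak S}(g)$ immediate from componentwise definitions, and the associator and unitors on both sides are identities (both tensor products being given by strict cartesian products of underlying sets equipped with prescribed extra structure), so no extra coherence check is required. The only genuine obstacle is the bookkeeping in the displayed computation above: one must carefully separate the $\mathcal X$- and $\mathcal Y$-coordinates and observe that the $\pm\infty$ entries are untouched by the $\sigma$-twist, since $\sigma\in\mathfrak S_n$ acts trivially on $\{\pm\infty\}$.
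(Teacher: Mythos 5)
Your proof is correct and follows essentially the same route as the paper: check unit preservation, identify both sides degreewise, and then expand $\varphi^\ast(x,y)$ on each side of $\Psi^{\mathfrak S}(\mathcal X\rtimes\mathcal Y)=\Psi^{\mathfrak S}(\mathcal X)\rtimes\Psi^{\mathfrak S}(\mathcal Y)$, matching them via the identities $e^{\varphi^\sigma}_{\pm\infty}=e^\varphi_{\pm\infty}$ and $e^{\varphi^\sigma}_j=e^\varphi_{\sigma^{-1}(j)}$. The only cosmetic difference is that you make the strictness of the associators and the compatibility of the structure maps to $\mathfrak S$ explicit, which the paper leaves implicit.
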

\begin{proof}
It is obvious that the functor $\Psi^{\mathfrak S}$ preserves the unit objects, namely $\Psi^{\mathfrak S}(\ast)=\ast$ with regard to the maps into $\Psi(\mathfrak S)=\mathfrak S$.
We have to show the equation $\Psi^{\mathfrak S}(\mathcal X\rtimes\mathcal Y)=\Psi^{\mathfrak S}(\mathcal X)\rtimes\Psi^{\mathfrak S}(\mathcal Y)$ for every pointed operads $\mathcal X$ and $\mathcal Y$ over $\mathfrak S$.
It clearly holds degreewisely, so it suffices to verify the structures of interval sets agree with each other.
Let $\varphi:\llangle m\rrangle\to\llangle n\rrangle\in\nabla$ be a morphism.
Say $\rho:\mathcal Y\to\mathfrak S$ is the structure map, then in the interval set $\Psi^{\mathfrak S}(\mathcal X\rtimes\mathcal Y)$, the induced map $\varphi^\ast:\Psi^{\mathfrak S}(\mathcal X\rtimes\mathcal Y)_n\to\Psi^{\mathfrak S}(\mathcal X\rtimes\mathcal Y)_m$ is given by
\begin{equation}
\label{eq:prf:Psi-monoidal:indmap}
\begin{split}
\varphi^\ast(x,y)
&= \gamma_{\mathcal X\rtimes\mathcal Y}(e_3;e^\varphi_{-\infty},\gamma_{\mathcal X\rtimes\mathcal Y}((x,y);e^\varphi_1,\dots,e^\varphi_n),e^\varphi_\infty) \\
& \begin{multlined}
=(\gamma_{\mathcal X}(e_3;e^\varphi_{-\infty},\gamma_{\mathcal X}(x;e^\varphi_{\rho(y)^{-1}(1)},\dots,e^\varphi_{\rho(y)^{-1}(n)}),e^\varphi_\infty), \\[1ex]
\gamma_{\mathcal Y}(e_3;e^\varphi_{-\infty},\gamma_{\mathcal Y}(y;e^\varphi_1,\dots,e^\varphi_n),e^\varphi_\infty))
\end{multlined}
\end{split}
\end{equation}
Note that for each $\sigma\in\mathfrak S_n$, we have
\[
e^{\varphi^\sigma}_{\pm\infty} = e^\varphi_{\pm\infty}
\ ,\quad e^{\varphi^\sigma}_j = e^\varphi_{\sigma^{-1}(j)}\ \text{for $1\le j\le n$}\ .
\]
Hence, the right hand side of \eqref{eq:prf:Psi-monoidal:indmap} can be written as $((\varphi^{\rho(y)})^\ast(x),\varphi^\ast(y))$, which is exactly the image of the pair $(x,y)$ under the map $\varphi^\ast:(\Psi^{\mathfrak S}(\mathcal X)\rtimes\Psi^{\mathfrak S}(\mathcal Y))_n\to(\Psi^{\mathfrak S}(\mathcal X)\rtimes\Psi^{\mathfrak S}(\mathcal Y))_m$.
It follows that $\Psi^{\mathfrak S}(\mathcal X\rtimes\mathcal Y)$ is identical to $\Psi^{\mathfrak S}(\mathcal X)\rtimes\Psi^{\mathfrak S}(\mathcal Y)$ as interval sets.
The structure maps into $\mathfrak S$ obviously coincide, so we obtain the result.
\end{proof}

\begin{theorem}
\label{theo:mongrp-emb}
The functor $\Psi^{\mathfrak S}$ induces a fully faithful functor
\[
\widehat\Psi:\mathbf{MonOp}\to\mathbf{CrsMon}_\nabla^{/\mathfrak S}.
\]
Moreover, it restricts to a right adjoint functor $\mathbf{GrpOp}\to\mathbf{CrsGrp}_\nabla^{/\mathfrak S}$.
\end{theorem}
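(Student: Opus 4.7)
The plan begins by observing that, via the natural isomorphisms $\mathbf{MonOp}\cong\mathbf{Mon}((\mathbf{Op}^{\ast/})^{/\mathfrak S},\rtimes)$ — every monoid object has a canonical unit map, which coincides with the monoidal unit $\ast\to\mathfrak S$ — and $\mathbf{CrsMon}_\nabla^{/\mathfrak S}\cong\mathbf{Mon}(\mathbf{Set}_\nabla^{/\mathfrak S},\rtimes)$ from \cref{rem:restrict-rtimes}, the strictly monoidal functor $\Psi^{\mathfrak S}$ of \cref{lem:Psi-monoidal} induces a functor $\widehat\Psi:\mathbf{MonOp}\to\mathbf{CrsMon}_\nabla^{/\mathfrak S}$, and this functor is level-wise the identity on underlying sets.

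Faithfulness is then immediate. For fullness, consider a morphism $f:\widehat\Psi(\mathcal X)\to\widehat\Psi(\mathcal Y)$ in $\mathbf{CrsMon}_\nabla^{/\mathfrak S}$. By construction each $f_n:\mathcal X(n)\to\mathcal Y(n)$ is a monoid homomorphism, commutes with every face map $\varphi^\ast$, and respects the structure map to $\mathfrak S$; it remains only to show it preserves operadic composition. I would use the fact that the multiplication map $\mathcal X\rtimes\mathcal X\to\mathcal X$ of a monoid operad is itself a map of operads to derive the monoid-operad interchange
\[
\gamma(xy;x_1y_1,\dots,x_ny_n)
= \gamma(x;x_{\rho(y)^{-1}(1)},\dots,x_{\rho(y)^{-1}(n)})\cdot\gamma(y;y_1,\dots,y_n),
\]
and then specialize, first with $y=e_n$, $x_i=e_{k_i}$, and then iteratively with $y=e_n$ against a one-hot pattern, to obtain the decomposition
\[
\gamma(x;y_1,\dots,y_n)
= \gamma(x;e_{k_1},\dots,e_{k_n})\cdot\prod_{i=1}^{n}\gamma(e_n;e_{k_1},\dots,y_i,\dots,e_{k_n}).
\]
Operadic associativity together with the basepoint axiom \eqref{eq:ptdoper-basept} then identifies each factor with a face-map application, namely $\gamma(x;e_{k_1},\dots,e_{k_n})=\varphi_0^\ast x$ where $\varphi_0:\llangle k_1+\dots+k_n\rrangle\to\llangle n\rrangle$ has fibre sizes $(0,k_1,\dots,k_n,0)$, and $\gamma(e_n;e_{k_1},\dots,y_i,\dots,e_{k_n})=\varphi_i^\ast y_i$ where $\varphi_i:\llangle k_1+\dots+k_n\rrangle\to\llangle k_i\rrangle$ has fibre sizes $(k_1+\dots+k_{i-1},\underbrace{1,\dots,1}_{k_i},k_{i+1}+\dots+k_n)$. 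Since $f$ preserves each $\varphi^\ast$ and monoid multiplication, preservation of $\gamma$ follows.

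For the restriction assertion, \cref{prop:grpop->monop} and \cref{prop:crsgrp-grpobj} characterize group operads inside $\mathbf{MonOp}$ and crossed interval groups inside $\mathbf{CrsMon}_\nabla$ by the condition that each level be a group. Because $\widehat\Psi$ is level-wise the identity on underlying sets, it restricts to a fully faithful functor $\mathbf{GrpOp}\to\mathbf{CrsGrp}_\nabla^{/\mathfrak S}$. To produce a left adjoint, note that $\mathbf{GrpOp}$ is locally presentable by \cref{theo:grpop-locpres} and $\mathbf{CrsGrp}_\nabla^{/\mathfrak S}$ is locally presentable by \cref{theo:crsgrp-locpres} together with stability of local presentability under slicing, so the adjoint functor theorem reduces the claim to limit-preservation. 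By \cref{theo:grpop-locpres} and \cref{theo:crsint-termWeyl}, small limits on each side are created by the forgetful functor to the $\mathbb N$-indexed family of underlying sets, and $\widehat\Psi$ is the identity on this data, so the preservation holds automatically.

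The main obstacle is the fullness step: the decomposition identity and the identification of each factor with a face-map application must be derived carefully from the interplay of the interchange law, operadic associativity, and the basepoint axiom, including the verification that $\gamma(e_3;e_0,w,e_0)=w$ for $w\in\mathcal X(m)$, which is the mechanism by which the outer $e_3$ in \eqref{eq:defPsi} can be discarded when the boundary fibres are empty.
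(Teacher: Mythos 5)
Your proposal is correct and follows essentially the same route as the paper: induce $\widehat\Psi$ from the strict monoidality of $\Psi^{\mathfrak S}$, prove fullness by decomposing $\gamma(x;x_1,\dots,x_n)$ as $\mu^\ast(x)\cdot\rho_1^\ast(x_1)\cdots\rho_n^\ast(x_n)$ for the same active/inert morphisms (your factors $\gamma(e_n;e_{k_1},\dots,x_i,\dots,e_{k_n})$ equal the paper's $\gamma(e_3;e_{k_1+\dots+k_{i-1}},x_i,e_{k_{i+1}+\dots+k_n})$ by associativity and the basepoint axiom), and obtain the left adjoint from local presentability plus the Adjoint Functor Theorem. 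The only cosmetic gap is that the AFT application also needs accessibility of $\widehat\Psi$, but this follows by the same creation argument you give for limits, since both forgetful functors also create filtered colimits.
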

\begin{proof}
Since the functor $\Psi^{\mathfrak S}$ is strictly monoidal as proved in \cref{lem:Psi-monoidal}, it induces a functor $\widehat\Psi$ between the categories of monoid objects.
More precisely, it sends a monoid operad $\mathcal X=(\mathcal X,\mathtt{mul},e)$ to the interval set $\Psi^{\mathfrak S}(\mathcal X)$ over $\mathfrak S$ together with the structure maps
\[
\begin{gathered}
\mathtt{mul}:\Psi^{\mathfrak S}(\mathcal X)\rtimes\Psi^{\mathfrak S}(\mathcal X)
= \Psi^{\mathfrak S}(\mathcal X\rtimes\mathcal X)
\xrightarrow{\Psi(\mathtt{mul})}\Psi^{\mathfrak S}(\mathcal X)
\\
e:\ast=\Psi^{\mathfrak S}(\ast)
\xrightarrow{\Psi(e)}\Psi^{\mathfrak S}(\mathcal X)\ .
\end{gathered}
\]
Hence, the monoid structure on each $\widehat\Psi(\mathcal X)_n$ coincides with that on $\mathcal X(n)$.
Clearly $\widehat\Psi$ is faithful, so we show it is also full.

Let $\mathcal X$ and $\mathcal Y$ be two monoid operads, and suppose $f:\widehat\Psi(\mathcal X)\to\Psi(\mathcal Y)$ be a map of crossed interval monoids.
To see $f$ comes from a map of monoid operads, it is enough to show that the maps
\[
f:\mathcal X(n)
=\Psi^{\mathfrak S}(\mathcal X)_n
\xrightarrow{f}\Psi^{\mathfrak S}(\mathcal Y)_n
=\mathcal Y(n)
\]
form a map of operads.
Since it preserves the unit elements, we have $f(e_n)=e_n$; in particular, $f$ preserves the identities $\mathrm{id}=e_1$ of operads.
If $x\in\mathcal X(n)=\Psi^{\mathfrak S}(\mathcal X)_n$ and $x_i\in\mathcal X(k_i)$ for $1\le i\le n$, the definition of monoid operads implies
\begin{equation}
\label{eq:prf:mongrp-emb}
\begin{split}
\gamma_{\mathcal X}(x;x_1,\dots,x_n)=\
&\gamma_{\mathcal X}(x;e_{k_1},\dots,e_{k_n}) \\
&\cdot\gamma_{\mathcal X}(e_3;e_0,x_1,e_{k_2+\dots+k_n}) \\
&\cdot\gamma_{\mathcal X}(e_3;e_{k_1},x_2,e_{k_3+\dots+k_n}) \\
&\qquad\vdots \\
&\cdot\gamma_{\mathcal X}(e_3;e_{k_1+\dots+k_{n-1}},x_n,e_0)\ .
\end{split}
\end{equation}
Taking the unique morphism $\mu:\llangle k_1+\dots+k_n\rrangle\to\llangle n\rrangle$ with $k^{(\mu)}_{\pm\infty}=0$ and $k^{(\mu)}_j=k_j$ for $j\in\langle n\rangle$ and the morphism $\rho_j:\llangle k_1+\dots+k_n\rrangle\to\llangle k_j\rrangle$ defined by
\[
\rho_j(i) :=
\begin{cases}
-\infty & i\le k_1+\dots+k_{j-1}\ , \\
i-(k_1+\dots+k_{j-1}) & k_1+\dots+k_{j-1}+1\le i\le k_1+\dots+k_j\ , \\
\infty & i\ge k_1+\dots+k_j+1\ ,
\end{cases}
\]
then, by the definition \eqref{eq:defPsi} of the functor $\Psi$, we can rewrite the formula \eqref{eq:prf:mongrp-emb} as follows:
\[
\gamma_{\mathcal X}(x;x_1,\dots,x_n)=\mu^\ast(x)\rho_1^\ast(x_1)\rho_2^\ast(x_2)\dots\rho_n^\ast(x_n)\ .
\]
The same formula also holds in $\mathcal Y$, and, since $f$ is a map of crossed interval monoids, we obtain
\[
\begin{split}
f(\gamma_{\mathcal X}(x;x_1,\dots,x_n))
&= f(\mu^\ast(x)\rho_1^\ast(x_1)\dots\rho_n^\ast(x_n)) \\
&= f(\mu^\ast(x))f(\rho_1^\ast(x_1))\dots f(\rho_n^\ast(x_n)) \\
&= \mu^\ast(f(x))\rho_1^\ast(f(x_1))\dots\rho_n^\ast(f(x_n)) \\
&= \gamma_{\mathcal Y}(f(x);f(x_1),\dots,f(x_n))\ .
\end{split}
\]
This implies that the maps $f:\mathcal X(n)\to\mathcal Y(n)$ form a map of monoid operads.

As for the last statement, it follows from \cref{prop:grpop->monop}, \cref{prop:crsgrp-grpobj}, \cref{theo:grpop-locpres}, \cref{theo:crsgrp-locpres}, \cref{theo:crsint-termWeyl}, and General Adjoint Functor Theorem (e.g. see Theorem~1.66 in \cite{AdamekRosicky1994}).
\end{proof}

Note that, by virtue of \cref{rem:crsWeyl-subs}, the inclusion $\mathfrak S\hookrightarrow\mathfrak W_\nabla$ of crossed interval group induces a fully faithful functor $\mathbf{CrsGrp}_\nabla^{/\mathfrak S}\hookrightarrow\mathbf{CrsGrp}_\nabla$.
Combining it with \cref{theo:mongrp-emb}, one obtains an embedding $\mathbf{GrpOp}\to\mathbf{CrsGrp}_\nabla$.
Hence, in the rest of the paper, we identify group operads with their images under this functor.
In particular, under this convention, we have $\mathfrak S=\widehat\Psi(\mathfrak S)$, which explains the coincidence of the notations.

\section{Operadic interval groups}
\label{sec:operintgrp}

In this section, we aim to determine the essential image of the embedding $\mathbf{GrpOp}\hookrightarrow\mathbf{CrsGrp}_\nabla$.
Since it is fully faithful, it will provide an alternative definition of group operads.
Furthermore, we will see that there is a larger class of crossed interval groups which are associated to operads.
This result suggests an extension of the notion of group operads.

First of all, we need to know about the category $\nabla$.

\begin{definition}
Let $\varphi:\llangle m\rrangle\to\llangle n\rrangle$ be a morphism in $\nabla$.
\begin{enumerate}[label={\rm(\arabic*)}]
  \item $\varphi$ is said to be \emph{active} if we have $\varphi:\varphi^{-1}\{\pm\infty\}\to\{\pm\infty\}$.
  \item $\varphi$ is said to be \emph{inert} if the restriction $\varphi:\varphi^{-1}\{1,\dots,n\}\to\{1,\dots,n\}$ is bijective.
\end{enumerate}
\end{definition}

\begin{remark}
In the paper \cite{Lur14}, Lurie considered the notions above for morphisms in the category $\mathbf{Fin}_\ast$ of pointed finite sets.
Actually, we have a functor
\[
\nabla\to\mathbf{Fin}_\ast
\ ;\quad \llangle n\rrangle\mapsto \llangle n\rrangle/\{\pm\infty\}\ .
\]
A morphism $\varphi:\llangle m\rrangle\to\llangle n\rrangle\in\nabla$ is active (resp. inert) if and only if so is its image in $\mathbf{Fin}_\ast$.
\end{remark}

The following results are easy to verify.

\begin{lemma}
\begin{enumerate}[label={\rm(\arabic*)}]
  \item Every morphism $\varphi:\llangle m\rrangle\to\llangle n\rrangle\in\nabla$ uniquely factors as $\varphi=\mu\rho$ with $\rho$ inert and $\mu$ active.
  \item Every inert morphism admits a unique section in $\nabla$.
\end{enumerate}
\end{lemma}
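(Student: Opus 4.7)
The proof will be fairly routine once one identifies the canonical way to split off the preimage of $\{\pm\infty\}$ from the preimage of $\{1,\dots,n\}$. For part (1), the plan is as follows. Given $\varphi:\llangle m\rrangle\to\llangle n\rrangle$, let $S:=\varphi^{-1}\{1,\dots,n\}\subseteq\langle m\rangle$, and set $k:=\#S$. Since $\varphi$ is order-preserving and fixes $\pm\infty$, the preimage $\varphi^{-1}\{-\infty\}$ is a down-set in $\llangle m\rrangle$ and $\varphi^{-1}\{\infty\}$ is an up-set, so $S$ sits as a consecutive block in $\langle m\rangle$. I will define $\rho:\llangle m\rrangle\to\llangle k\rrangle$ by sending $\varphi^{-1}\{-\infty\}$ to $-\infty$, sending $\varphi^{-1}\{\infty\}$ to $\infty$, and sending $S$ to $\{1,\dots,k\}$ via the unique order-preserving bijection. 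Then $\rho$ is visibly inert. The factor $\mu:\llangle k\rrangle\to\llangle n\rrangle$ is forced: $\mu(\pm\infty)=\pm\infty$ and $\mu(i)=\varphi(s_i)$ where $s_i$ is the $i$-th element of $S$. Because no element of $S$ is sent by $\varphi$ to $\pm\infty$, $\mu$ is active. Order-preservation and $\varphi=\mu\rho$ are immediate.

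For uniqueness, suppose $\varphi=\mu'\rho'$ with $\rho':\llangle m\rrangle\to\llangle k'\rrangle$ inert and $\mu'$ active. Activeness of $\mu'$ gives $(\mu')^{-1}\{\pm\infty\}=\{\pm\infty\}$, and inertness of $\rho'$ means $(\rho')^{-1}\{1,\dots,k'\}\to\{1,\dots,k'\}$ is a bijection; together these force $\varphi^{-1}\{1,\dots,n\}=(\rho')^{-1}\{1,\dots,k'\}$, so $k'=k$ and $\rho'$ is determined on the two complementary blocks as well. Then $\mu'$ is determined by $\mu'=\varphi\circ s$ for the inverse bijection $s$ on $\{1,\dots,k\}$.

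Part (2) follows the same pattern. Given an inert $\rho:\llangle m\rrangle\to\llangle n\rrangle$, the restriction to $\rho^{-1}\{1,\dots,n\}$ is an order-preserving bijection onto $\{1,\dots,n\}$, so its inverse $r:\{1,\dots,n\}\to\rho^{-1}\{1,\dots,n\}\subseteq\langle m\rangle$ is order-preserving. Extending by $\pm\infty\mapsto\pm\infty$ gives an order-preserving $s:\llangle n\rrangle\to\llangle m\rrangle$ with $\rho s=\mathrm{id}$. Any other section $s'$ must land in $\rho^{-1}\{j\}$ on $j$; for $j\in\{1,\dots,n\}$ this preimage is a singleton forced to equal $s(j)$, and for $j=\pm\infty$ it is forced by the convention $s'(\pm\infty)=\pm\infty$.

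There is no real obstacle here; the only thing to watch is making sure the block structure of the preimage of $\{\pm\infty\}$ is correctly extracted so that $\rho$ is genuinely a morphism in $\nabla$ (in particular that the interior $S$ maps order-isomorphically to $\{1,\dots,k\}$). Everything else is bookkeeping dictated by the very definitions of \emph{active} and \emph{inert}.
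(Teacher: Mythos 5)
Your proof is correct; the paper itself leaves this lemma as "easy to verify," and your construction (splitting $\langle m\rangle$ into the down-set $\varphi^{-1}\{-\infty\}$, the consecutive block $S=\varphi^{-1}\{1,\dots,n\}$, and the up-set $\varphi^{-1}\{\infty\}$) is exactly the canonical argument being left to the reader. The only step worth writing out is the one you assert in passing: in the uniqueness argument, $\rho'$ is forced on the complement of $S$ because $\mu'(\pm\infty)=\pm\infty$ together with $\varphi=\mu'\rho'$ pins down which of $\pm\infty$ each such point must hit.
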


In the following arguments, inert morphisms play distinguished roles.
One reason is the definition of the functor $\widehat\Psi$; if $\mathcal G$ is a group operad, then the group $\mathcal G(k)$ admits embeddings into $\mathcal G(n)$ provided $k\le n$, namely the group homomorphisms of the form
\[
\mathcal G(k)\to\mathcal G(n)
\ ;\quad x\mapsto \gamma(e_3;e_p,x,e_q)
\]
with $n=k+p+q$.
In terms of the category $\nabla$, they are realized as the maps induced by inert morphisms $\rho:\llangle n\rrangle\to\llangle k\rrangle$.
For example if $\rho:\llangle n\rrangle\to\llangle k\rrangle$ is an inert morphism with the unique section $\delta:\llangle k\rrangle\to\llangle n\rrangle$, then the map $\rho^\ast:\mathfrak S(k)\to\mathfrak S(n)$ exhibits each permutation $\sigma$ on $\langle k\rangle$ as that on $\{\delta(1),\dots,\delta(k)\}\subset\langle n\rangle$.
This \emph{embeddability} is one of the characteristic properties of group operads among general crossed interval groups.

\begin{lemma}
\label{lem:inert-hom}
For a crossed interval group $G$, the following are equivalent:
\begin{enumerate}[label={\rm(\alph*)}]
  \item\label{cond:inert-hom:stab} the unique map $G\to\mathfrak W_\nabla$ of crossed interval groups factors through the hyperoctahedral crossed interval group $\mathfrak H\subset\mathfrak W_\nabla$.
  \item\label{cond:inert-hom:homo} for every inert morphism $\rho:\llangle n\rrangle\to\llangle k\rrangle$, and for every $x\in G_k$, we have $\rho^x=\rho$.
\end{enumerate}
In the case the conditions above are satisfied, each inert morphism $\rho:\llangle n\rrangle\to\llangle k\rrangle\in\nabla$ induces an injective group homomorphism $\rho^\ast:G_k\to G_l$.
\end{lemma}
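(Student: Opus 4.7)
The plan is to read both conditions off from the $\mathfrak{S}_n\times\mathbb{Z}/2\mathbb{Z}$-action on morphisms of $\nabla$ described just before \cref{prop:interval-wr}. For any triangle $(H_0,H_1)$, the crossed group $W(H_0,H_1)$ acts on $\nabla$-morphisms via the projection $(\mathfrak{S}_n\ltimes H_1^{\times n})\times H_0 \to \mathfrak{S}_n\times\mathbb{Z}/2\mathbb{Z}$; in particular $\mathfrak{H}\subset\mathfrak{W}_\nabla$ is precisely the sub-crossed interval group whose image in this second factor is trivial. Hence (a) is equivalent to: for every $k$ and every $x\in G_k$, the image $\bar x\in (\mathfrak{W}_\nabla)_k$ has trivial $\mathbb{Z}/2\mathbb{Z}$-component.

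Specializing the action formula to an inert $\rho:\llangle n\rrangle\to\llangle k\rrangle$, the tuple $(k^{(\rho)}_1,\dots,k^{(\rho)}_k)$ consists entirely of $1$'s, so every $\sigma\in\mathfrak{S}_k$ preserves it; the only way $(\sigma;\varepsilon)\in\mathfrak{S}_k\times\mathbb{Z}/2\mathbb{Z}$ can move $\rho$ is when $\varepsilon=1$ and $k^{(\rho)}_{-\infty}\ne k^{(\rho)}_\infty$, in which case the two end-multiplicities are swapped. From this, (a)$\Rightarrow$(b) is immediate. For the converse, one applies (b) to a single witnessing inert morphism, for instance $\rho:\llangle k+1\rrangle\to\llangle k\rrangle$ with $k^{(\rho)}_\infty=1$ and $k^{(\rho)}_{-\infty}=0$, which forces the $\mathbb{Z}/2\mathbb{Z}$-component of every $\bar x$ to vanish.

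Assuming (b), the crossed group axiom $\rho^\ast(xy)=(\rho^y)^\ast(x)\,\rho^\ast(y)$ collapses, via $\rho^y=\rho$, to $\rho^\ast(xy)=\rho^\ast(x)\rho^\ast(y)$, so $\rho^\ast$ is a group homomorphism; injectivity is instantaneous from the earlier lemma that every inert morphism admits a unique section $\delta$ in $\nabla$, whence $\delta^\ast\rho^\ast=(\rho\delta)^\ast=\mathrm{id}$.

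The main point to be careful about is the bookkeeping between the two copies of $\mathbb{Z}/2\mathbb{Z}$ sitting inside $\mathfrak{W}_\nabla$: only the $H_0$-part enters the $\mathbb{Z}/2\mathbb{Z}$-factor controlling the action on $\nabla$-morphisms, so the $H_1^{\times n}$-component of the image of $G$ is entirely invisible to both (a) and (b) and should be separated out at the beginning to avoid conflating it with the relevant factor.
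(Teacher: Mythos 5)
Your proof is correct and takes essentially the same route as the paper's: the paper's whole argument for the equivalence of (a) and (b) is the one-line observation that $\mathfrak H_n\subset(\mathfrak W_\nabla)_n$ is exactly the subgroup acting trivially on all inert morphisms, and your computation with the multiplicity vectors $(k^{(\rho)}_{-\infty},1,\dots,1,k^{(\rho)}_{\infty})$ (including the witnessing inert morphism with $k^{(\rho)}_{-\infty}\neq k^{(\rho)}_{\infty}$ for the converse) is precisely the verification of that claim. For the final assertion the paper just cites an external lemma, while your direct derivation from the crossed-group axiom $\rho^\ast(xy)=(\rho^y)^\ast(x)\rho^\ast(y)$ together with $\delta^\ast\rho^\ast=(\rho\delta)^\ast=\mathrm{id}$ for the unique section $\delta$ supplies the same content.
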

\begin{proof}
Note that, for each $n\in\mathbb N$, the subgroup $\mathfrak H_n\subset(\mathfrak W_\nabla)_n$ consists of elements whose actions on inert morphisms are trivial.
Since the map $G\to\mathfrak W_\nabla$ respects the action on morphisms, this implies the conditions \ref{cond:inert-hom:stab} and \ref{cond:inert-hom:homo} are equivalent.

The last statement is directly follows from the definition of crossed groups (e.g. see Lemma~1.1 in \cite{Yoshidalimcolim}).
\end{proof}

\begin{remark}
The last statement in \cref{lem:inert-hom} is not equivalent to the two conditions.
For example, there is a crossed interval group $\mathfrak{Refl}$ which is the constant presheaf at $\mathbb Z/2\mathbb Z$ and the action on hom-sets ``reverses the order.''
This has non-trivial actions on inert morphisms while every morphism $\varphi:\llangle m\rrangle\to\llangle n\rrangle\in\nabla$ induces a group homomorphism $\varphi^\ast:\mathfrak{Refl}_n\to\mathfrak{Refl}_m$.
\end{remark}

The next property we discuss is the commutativity of elements with ``distinct supports.''
Suppose $\mathcal G$ is a group operad, and consider two embeddings
\[
\begin{gathered}
\eta:\mathcal G(k)\to\mathcal G(k+l)
\ ;\quad x\mapsto\gamma(e_2;x,e_l)\ ,
\\
\eta':\mathcal G(l)\to\mathcal G(k+l)
\ ;\quad y\mapsto\gamma(e_2;e_k,y)\ .
\end{gathered}
\]
Then, for each $x\in\mathcal G(k)$ and $y\in\mathcal G(l)$, we have $\eta(x)\eta'(y)=\eta'(y)\eta(x)$ in $G(k+l)$.
In other words, they induces an injective group homomorphism $\mathcal G(k)\times\mathcal G(l)\to\mathcal G(k+l)$.
To formulate this phenomenon in terms of the category $\nabla$, we introduce the following notion.

\begin{definition}
We say two morphisms $\varphi_1:\llangle n\rrangle\to\llangle k_1\rrangle$ and $\varphi_2:\llangle n\rrangle\to\llangle k_2\rrangle$ in $\nabla$ with the same domain are \emph{dissociated} if, for each morphism $\alpha:\llangle 1\rrangle\to\llangle n\rrangle\in\nabla$, either of the compositions $\varphi_1\alpha$ or $\varphi_2\alpha$ factors through the object $\llangle 0\rrangle\in\nabla$.
\end{definition}

Note that, in view of the identification $\nabla(\llangle 1\rrangle,\llangle n\rrangle)\cong\llangle n\rrangle$, two morphisms $\varphi_1:\llangle n\rrangle\to\llangle k_1\rrangle$ and $\varphi_2:\llangle n\rrangle\to\llangle k_2\rrangle$ in $\nabla$ are dissociated if and only if, for each $i\in\llangle n\rrangle$, either $\varphi_1(i)$ or $\varphi_2(i)$ is $\pm\infty$.
This observation leads to the result below.

\begin{lemma}
\label{lem:dissoc-comp}
Let $\varphi_i:\llangle n\rrangle\to\llangle k_i\rrangle$ ($i=1,2$) be morphisms in $\nabla$.
\begin{enumerate}[label={\rm(\arabic*)}]
  \item If $\varphi_1=\mu_1\rho_1$ and $\varphi_2=\mu_2\rho_2$ are the unique factorizations into inert morphisms followed by active morphisms, then $\varphi_1$ and $\varphi_2$ are dissociated if and only if $\rho_1$ and $\rho_2$ are so.
  \item For a morphism $\psi:\llangle m\rrangle\to\llangle\to\llangle n\rrangle\in\nabla$, the compositions $\varphi_1\psi$ and $\varphi_2\psi$ are dissociated provided so are $\varphi_1$ and $\varphi_2$.
\end{enumerate}
\end{lemma}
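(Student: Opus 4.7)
The plan is to unpack both parts via the elementary characterization that two morphisms $\varphi_1,\varphi_2$ with common domain $\llangle n\rrangle$ are dissociated precisely when, for each $i\in\llangle n\rrangle$, at least one of $\varphi_1(i),\varphi_2(i)$ lies in $\{-\infty,\infty\}$. This restatement is already indicated in the paragraph preceding the lemma: it follows from the identification $\nabla(\llangle 1\rrangle,\llangle n\rrangle)\cong\llangle n\rrangle$ together with the observation that a morphism $\llangle 1\rrangle\to\llangle k\rrangle\in\nabla$ factors through $\llangle 0\rrangle$ if and only if its value at $1$ is $\pm\infty$ (since $\llangle 0\rrangle=\{-\infty,\infty\}$ and the map $\llangle 0\rrangle\to\llangle k\rrangle$ is forced). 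I would begin by recording this reformulation explicitly.

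For part (1), the key observation is that an active morphism $\mu:\llangle l\rrangle\to\llangle k\rrangle$ satisfies $\mu(j)\in\{-\infty,\infty\}$ if and only if $j\in\{-\infty,\infty\}$; this is exactly the definition of active ($\mu^{-1}\{\pm\infty\}=\{\pm\infty\}$). Hence for each $j\in\llangle n\rrangle$, one has $\varphi_i(j)=\mu_i(\rho_i(j))\in\{-\infty,\infty\}$ if and only if $\rho_i(j)\in\{-\infty,\infty\}$. Applying the elementary characterization to both pairs $(\varphi_1,\varphi_2)$ and $(\rho_1,\rho_2)$ gives the desired equivalence.

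Part (2) is immediate from the same characterization. Given $i\in\llangle m\rrangle$, set $j:=\psi(i)\in\llangle n\rrangle$. The dissociation hypothesis on $(\varphi_1,\varphi_2)$ ensures that either $\varphi_1(j)$ or $\varphi_2(j)$ is in $\{-\infty,\infty\}$, which is the statement that either $\varphi_1\psi(i)$ or $\varphi_2\psi(i)$ is in $\{-\infty,\infty\}$; hence $(\varphi_1\psi,\varphi_2\psi)$ is dissociated.

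There is no substantive obstacle: once the pointwise reformulation of dissociation has been carefully established, both parts reduce to one-line arguments using the definitions of active morphisms and of composition in $\nabla$. The only mild care needed is to check that the cases $j=\pm\infty$ are handled trivially (both values are $\pm\infty$, so dissociation at such $i$ is automatic), so that attention may be restricted to $j\in\{1,\dots,n\}$.
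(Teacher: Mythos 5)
Your proof is correct and takes essentially the same route the paper intends: the paper leaves this lemma without a written proof, relying precisely on the pointwise reformulation of dissociation (``either $\varphi_1(i)$ or $\varphi_2(i)$ is $\pm\infty$'') recorded in the paragraph before the statement, from which both parts follow exactly as you argue via $\mu^{-1}\{\pm\infty\}=\{\pm\infty\}$ for active morphisms and evaluation at $\psi(i)$.
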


\begin{lemma}
\label{lem:dissoc-rstab}
Let $G$ be a crossed interval group satisfying the conditions in \cref{lem:inert-hom}, and suppose $\varphi_1:\llangle n\rrangle\to\llangle k_1\rrangle$ and $\varphi_2:\llangle n\rrangle\to\llangle k_2\rrangle$ are dissociated morphisms of $\nabla$.
Then, for each element $x\in G(k_1)$ and every morphism $\psi:\llangle m\rrangle\to\llangle n\rrangle\in\nabla$, we have
\[
\varphi_2\psi^{\varphi_1^\ast(x)}=\varphi_2\psi\ .
\]
\end{lemma}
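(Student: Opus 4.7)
The plan is to reduce the identity to an explicit computation of a permutation in $\mathfrak S_n$ and then to exploit dissociation to see that this permutation has no effect on $\varphi_2\psi$. By \cref{lem:inert-hom} the structural map $G\to\mathfrak W_\nabla$ factors through $\mathfrak H$; since the action of $\mathfrak H_n$ on $\nabla(\llangle m\rrangle,\llangle n\rrangle)$ factors through the canonical projection $\mathfrak H_n\to\mathfrak S_n$ (the $\mathbb Z/2\mathbb Z$-factor appearing in \cref{prop:interval-wr} vanishes because $H_0=\ast$ for $\mathfrak H$), I write $\sigma\in\mathfrak S_n$ for the image of $\varphi_1^\ast(x)$ under the composite $G_n\to\mathfrak H_n\to\mathfrak S_n$. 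Then $\psi^{\varphi_1^\ast(x)}=\psi^\sigma$, so it suffices to show $\varphi_2\psi^\sigma=\varphi_2\psi$ as morphisms of $\nabla$, that is, to match their fiber-size vectors.

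Next I make $\sigma$ explicit. Let $(\tau;\vec x)\in\mathfrak S_{k_1}\ltimes(\mathbb Z/2\mathbb Z)^{k_1}$ denote the image of $x$ in $\mathfrak H_{k_1}$. By formula \eqref{eq:WH-interval},
\[
\sigma=\gamma\bigl(\mathrm{id}_3;\mathrm{id}_{k^{(\varphi_1)}_{-\infty}},\gamma(\tau;\beta^{x_1}_{k^{(\varphi_1)}_1},\dots,\beta^{x_{k_1}}_{k^{(\varphi_1)}_{k_1}}),\mathrm{id}_{k^{(\varphi_1)}_\infty}\bigr),
\]
so $\sigma$ fixes pointwise the initial segment $\{1,\dots,k^{(\varphi_1)}_{-\infty}\}$ and the final segment $\{n-k^{(\varphi_1)}_\infty+1,\dots,n\}$ of $\langle n\rangle$, and on the middle part $\varphi_1^{-1}(\{1,\dots,k_1\})\cap\langle n\rangle$ it permutes the $\varphi_1$-blocks according to $\tau$, reversing each one internally as prescribed by $\vec x$. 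Now I apply dissociation: for every $i\in\langle n\rangle$ at least one of $\varphi_1(i)$, $\varphi_2(i)$ lies in $\{\pm\infty\}$, so the support of $\sigma$ is contained in $\varphi_2^{-1}(\{-\infty,\infty\})\cap\langle n\rangle$. In particular, $\sigma$ fixes pointwise each $\varphi_2^{-1}(\{j\})\cap\langle n\rangle$ for $j\in\{1,\dots,k_2\}$.

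The concluding step is a comparison of fiber vectors. For $j\in\{1,\dots,k_2\}$, pointwise fixation immediately yields
\[
k^{(\varphi_2\psi^\sigma)}_j=\sum_{i\in\varphi_2^{-1}(\{j\})}k^{(\psi)}_{\sigma^{-1}(i)}=\sum_{i\in\varphi_2^{-1}(\{j\})}k^{(\psi)}_i=k^{(\varphi_2\psi)}_j.
\]
The main obstacle lies in the fibers over $\pm\infty$: dissociation places the support of $\sigma$ inside $\varphi_2^{-1}(\{-\infty\})\cup\varphi_2^{-1}(\{\infty\})$, but one must still argue that $\sigma$ preserves each of the two sets individually, not merely their union. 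Here I would combine the block-permutation description of $\sigma$ from Step~2 with the order-preservation of $\varphi_1$ and $\varphi_2$ (the sets $\varphi_2^{-1}(\{-\infty\})\cap\langle n\rangle$ and $\varphi_2^{-1}(\{\infty\})\cap\langle n\rangle$ are initial and final segments of $\langle n\rangle$, and the $\varphi_1$-blocks are consecutive intervals) in order to track precisely which $\varphi_1$-blocks $\tau$ may swap within a single $\varphi_2$-half. The same summation then closes the cases $j=\pm\infty$ and identifies $\varphi_2\psi^\sigma$ with $\varphi_2\psi$.
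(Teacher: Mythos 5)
Your overall route --- factoring through $\mathfrak H$ and then through $\mathfrak S_n$, computing the underlying permutation $\sigma$ of $\varphi_1^\ast(x)$ from \eqref{eq:WH-interval}, and comparing fiber vectors --- is sound and close in spirit to the paper's proof; the paper instead writes $\varphi_2\psi^{\varphi_1^\ast(x)}$ as the composite of maps $\varphi_2\circ\sigma\circ\psi\circ\psi^\ast(\sigma)^{-1}$ and kills the two permutation factors separately (the inner one via \cref{lem:dissoc-comp}), whereas your fiber-vector bookkeeping absorbs $\psi^\ast(\sigma)$ automatically, which is a small simplification. Your computation for $1\le j\le k_2$ is complete and correct. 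The gap is that you stop at exactly the step that carries the content of the lemma: the fibers over $\pm\infty$ are only announced as an obstacle together with a plan (``track which $\varphi_1$-blocks $\tau$ may swap within a single $\varphi_2$-half''), and that plan does not yet contain the observation that makes the case go through.

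The missing observation is this: the support of $\sigma$ is contained not merely in a union of $\varphi_1$-blocks but in the \emph{single consecutive interval} $I=\varphi_1^{-1}\{1,\dots,k_1\}\cap\langle n\rangle$, and dissociation says $I$ is disjoint from $M=\varphi_2^{-1}\{1,\dots,k_2\}\cap\langle n\rangle$. Since $\varphi_2$ is order-preserving, $A=\varphi_2^{-1}\{-\infty\}\cap\langle n\rangle$ and $B=\varphi_2^{-1}\{\infty\}\cap\langle n\rangle$ are an initial and a final segment of $\langle n\rangle$ separated by $M$; hence, when $M\neq\emptyset$, the consecutive interval $I$ lies entirely in $A$ or entirely in $B$, so $\sigma$ permutes that segment within itself and fixes the other pointwise. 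This is exactly the assertion in the paper's proof that $\sigma$ ``is the identity except on exactly one of $\varphi_2^{-1}\{-\infty\}$ or $\varphi_2^{-1}\{\infty\}$''; it yields $\sigma(A)=A$ and $\sigma(B)=B$, after which your summation closes the cases $j=\pm\infty$. Be aware that when $M=\emptyset$ (e.g.\ $k_2=0$ with $\varphi_2$ non-constant on $\langle n\rangle$) the interval $I$ can straddle $A$ and $B$ and the target claim ``$\sigma$ preserves each of the two sets individually'' genuinely fails, so this degenerate configuration would need separate treatment; the paper's proof makes the same tacit assumption, but your completion strategy will run into it if you try to carry it out in full generality.
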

\begin{proof}
Since the unique map $G\to\mathfrak W_\nabla$ factors through $G\to\mathfrak H$, and since it respects the actions on the morphisms of $\nabla$, it will suffice to verify the statement only in the case $G=\mathfrak H$.
Note that, for a morphism $\psi:\llangle m\rrangle\to\llangle n\rrangle\in\nabla$, and for an element $(\sigma;\vec\varepsilon)\in\mathfrak H_n$, the compositions $\varphi_2\psi^{(\sigma;\vec\varepsilon)}$ is, as a map, the composition
\[
\llangle m\rrangle
\xrightarrow{\psi^\ast(\sigma)^{-1}} \llangle m\rrangle
\xrightarrow{\psi} \llangle n\rrangle
\xrightarrow{\sigma} \llangle n\rrangle
\xrightarrow{\varphi_2} \llangle k_2\rrangle\ .
\]
If $(\sigma;\vec\varepsilon)$ belongs to the image of the map $\varphi_1^\ast:\mathfrak H_{k_1}\to\mathfrak H_n$, the dissociativity of $\varphi_1$ and $\varphi_2$ implies the permutation $\sigma$ is the identity except on exactly one of $\varphi_2^{-1}\{-\infty\}$ or $\varphi_2^{-1}\{\infty\}$.
Hence, we have $\varphi_2\circ\sigma=\varphi_2$ as maps.
In the same reason, by virtue of \cref{lem:dissoc-comp}, we obtain $\varphi_2\circ\psi\circ\psi^\ast(\sigma)^{-1}=\varphi_2\circ\psi$.
This completes the proof.
\end{proof}

Now, we formulate the ``operad-like'' crossed interval groups as below.

\begin{definition}
A crossed interval group $G$ is said to be \emph{operadic} if it satisfies the following:
\begin{enumerate}[label={\rm(\roman*)}]
  \item $G$ satisfies the equivalent conditions in \cref{lem:inert-hom};
  \item if $\rho_1:\llangle n\rrangle\to\llangle k_1\rrangle$ and $\rho_2:\llangle n\rrangle\to\llangle k_2\rrangle$ are dissociated inert morphisms in $\nabla$, then elements of the images $\rho_1^\ast(G_{k_1})$ commute with those of $\rho_2^\ast(G_{k_2})$; in other words, the commutator $[\rho_1^\ast(G_{k_1}),\rho_2^\ast(G_{k_2})]\subset G_n$ is trivial.
\end{enumerate}
\end{definition}

\begin{example}
\label{ex:grpop-operadic}
As expected, for every group operad $\mathcal G$, the crossed interval group $\widehat\Psi(\mathcal G)$ is operadic.
To see this, notice that if $\rho_1:\llangle n\rrangle\to\llangle k_1\rrangle$ and $\rho_2:\llangle n\rrangle\to\llangle k_2\rrangle$ are dissociated inert morphisms, then there are integers $k_{\pm\infty}$ and $l$ such that, for each $x_i\in\mathcal G(k_i)$,
\[
\rho_{i_1}^\ast(x_{i_1})
= \gamma(e_5;e_{-\infty},x_{i_1},e_l,e_{k_{i_2}},e_\infty)
\ ,\quad
\rho_{i_2}^\ast(x_{i_2})
= \gamma(e_5;e_{-\infty},e_{i_2},e_l,x_{i_2},e_\infty)
\]
for $\{i_1,i_2\}=\{1,2\}$.
Hence, the condition on group operads implies these elements commute with each other.
In view of \cref{lem:dissoc-comp}, it follows $\widehat\Psi(\mathcal G)$ is operadic.
\end{example}

\begin{example}
The crossed interval group $\mathfrak H$ is operadic.
This follows from the direct computation and that $\mathfrak S$ is operadic.
\end{example}

Operadic crossed interval groups are actually associated with operads.
To see it, we introduce some notions for simplicity; for a sequence $\vec k=(k_1,\dots,k_n)$ of non-negative integers, we put
\begin{itemize}
  \item $\mu_{\vec k}:\llangle k_1+\dots+k_n\rrangle\to\llangle n\rrangle\in\nabla$ to be the unique active morphism with $\#\mu_{\vec k}^{-1}\{j\}=k_j$ for each $1\le j\le n$; and
  \item $\rho^{(\vec k)}_j:\llangle k_1+\dots+k_n\rrangle\to\llangle k_j\rrangle\in\nabla$ to be the inert morphism given by
\[
\begin{split}
&\llangle k_1+\dots+k_n\rrangle \\
&\begin{multlined}
\cong \{-\infty,1,\dots,k_1+\dots+ k_{j-1}\}\star\langle k_j\rangle \\
\phantom{\cong}\star\{k_1+\dots+k_j+1,\dots,k_1+\dots+k_n,\infty\}
\end{multlined}
\\
&\xrightarrow{\mathtt{const}\star\mathrm{id}\star\mathtt{const}} \{-\infty\}\star\langle k_j\rangle\star\{\infty\} \\
&\cong\llangle k_j\rrangle\ .
\end{split}
\]
\end{itemize}
For an operadic crossed interval group $G$, we define an operad $\mathcal O_G$ as follows:
\begin{itemize}
  \item for each $n\in\mathbb N$, $\mathcal O_G(n):=G_n$;
  \item the composition
\[
\gamma:\mathcal O_G(n)\times\mathcal O_G(k_1)\times\dots\times\mathcal O_G(k_n)
\to\mathcal O_G(k_1+\dots+k_n)
\]
is given by
\[
\gamma(x;x_1,\dots,x_n)
:= \mu_{\vec k}^\ast(x)\cdot(\rho^{(\vec k)}_1)^\ast(x_1)\dots(\rho^{(\vec k)}_n)^\ast(x_n)\ ,
\]
where $\vec k=(k_1,\dots,k_n)$.
\end{itemize}
The associativity is seen as follows:
let $x\in G_n$, $x_i\in G_{k_i}$, and $x^{(i)}_s\in G_{k^{(i)}_s}$ for $1\le i\le n$ and $1\le s\le k_i$, and put $\vec l=(\sum_s k^{(1)}_s,\dots,\sum_s k^{(n)}_s)$.
Then, we have
\begin{equation}
\label{eq:OG-rgamma}
\begin{split}
&\gamma\bigl(x;\gamma(x_1;x^{(1)}_1,\dots,x^{(1)}_{k_1}),\dots,\gamma(x_n;x^{(n)}_1,\dots,x^{(1)}_{k_n})\bigr) \\
& \begin{multlined}
= \mu_{\vec l}^\ast(x)\cdot(\rho^{(\vec l)}_1)^\ast\bigl(\mu_{\vec k^{(1)}}^\ast(x_1)\cdot(\rho^{(\vec k^{(1)})}_1)^\ast(x^{(1)}_1)\dots(\rho^{(\vec k^{(1)})}_{k_1})^\ast(x^{(1)}_{k_1})\bigr) \\
\dots (\rho^{(\vec l)}_n)^\ast\bigl(\mu_{\vec k^{(n)}}^\ast(x_n)\cdot(\rho^{(\vec k^{(n)})}_1)^\ast(x^{(n)}_1)\dots(\rho^{(\vec k^{(n)})}_{k_n})^\ast(x^{(n)}_{k_n})\bigr)\ .
\end{multlined}
\end{split}
\end{equation}
Since $G$ is operadic, each $(\rho^{(\vec l)}_i)^\ast$ is a group homomorphism.
In addition, $\rho^{(\vec l)}_i$ and $\rho^{(\vec l)}_j$ are dissociated provided $i\neq j$, so by virtue of \cref{lem:dissoc-rstab}, the right hand side of \eqref{eq:OG-rgamma} is written as
\begin{multline}
\label{eq:OG-prelgamma}
\mu_{\vec l}^\ast(x)(\mu_{\vec k^{(1)}}\rho^{(\vec l)}_1)^\ast(x_1)\dots(\mu_{\vec k^{(n)}}\rho^{(\vec l)}_n)^\ast(x_n) \\
\cdot (\rho^{(\vec k^{(1)})}_1\rho^{(\vec l)}_1)^\ast(x^{(1)}_1)\dots(\rho^{(\vec k^{(1)})}_{k_1}\rho^{(\vec l)}_1)^\ast(x^{(1)}_{k_1})\dots(\rho^{(\vec k^{(n)})}_{k_n}\rho^{(\vec l)}_n)^\ast(x^{(n)}_{k_n})\ .
\end{multline}
Finally, using the formulas
\[
\begin{gathered}
\mu^{}_{\vec l}
= \mu^{}_{\vec k}\mu^{}_{\vec k^{(1)}\dots\vec k^{(n)}} \\[1ex]
\mu^{}_{\vec k^{(i)}}\rho^{(\vec l)}_i
= \rho^{(\vec k)}_i\mu^{}_{\vec k^{(1)}\dots\vec k^{(n)}} \\[1ex]
\rho^{(\vec k^{(i)})}_s\rho^{(\vec l)}_i
= \rho^{(\vec k^{(1)}\dots\vec k^{(n)})}_{k_1+\dots+k_{i-1}+s}
\end{gathered}
\]
with $\vec k^{(1)}\dots\vec k^{(n)}=(k^{(1)}_1,\dots,k^{(1)}_{k_1},\dots,k^{(n)}_{k_n})$, \eqref{eq:OG-prelgamma} is furthermore equal to
\[
\begin{split}
&\begin{multlined}
\mu_{\vec k^{(1)}\dots\vec k^{(n)}}^\ast\bigl(\mu_{\vec k}^\ast(x)(\rho^{(\vec k)}_1)^\ast(x_1)\dots(\rho^{(\vec k)}_n)^\ast(x_n)\bigr) \\[1ex]
\cdot (\rho^{(\vec k^{(1)}\dots\vec k^{(n)})}_1)^\ast(x^{(1)}_1)\dots(\rho^{(\vec k^{(1)}\dots\vec k^{(n)})}_{k_1})^\ast(x^{(1)}_{k_1})\dots(\rho^{(\vec k^{(1)}\dots\vec k^{(n)})}_{k_1+\dots+k_n})^\ast(x^{(n)}_{k_n})
\end{multlined} \\
&= \gamma(\gamma(x;x_1,\dots,x_n);x^{(1)}_1,\dots, x^{(1)}_{k_1},\dots,x^{(n)}_{k_n})\ .
\end{split}
\]
Clearly, the unit $e_1\in G_1=\mathcal O_G(1)$ behaves as the identity, so $\mathcal O_G$ is in fact an operad.

\begin{example}
\label{ex:O-grpop}
If $\mathcal G$ is a group operad, we have a strict identification
\[
\mathcal O_{\widehat\Psi(\mathcal G)} = \mathcal G\ .
\]
In particular, $\mathcal O_{\mathfrak S}=\mathfrak S$.
\end{example}

We denote by $\mathbf{CrsOpGrp}\subset\mathbf{CrsGrp}^{/\mathfrak H}_\nabla$ the full subcategory spanned by operadic crossed interval groups.
On the other hand, we define a category $\mathbf{Op}_{\mathrm{gr}}$ whose objects are operads $\mathcal O$ equipped with a group structure on each $\mathcal O(n)$ and whose morphisms are maps of operads which are group homomorphisms level-wisely.
Then, the assignment $G\mapsto\mathcal O_G$ clearly extends to a functor $\mathcal O_{(\blank)}:\mathbf{CrsOpGrp}\to\mathbf{Op}_{\mathrm{gr}}$.
By abuse of notation, we write $\mathfrak H=\mathcal O_{\mathfrak H}$, so we have the induced functor
\[
\mathcal O^{\mathfrak H}_{(\blank)}:\mathbf{CrsOpGrp}
\to\mathbf{Op}_{\mathrm{gr}}^{/\mathfrak H}\ .
\]

\begin{theorem}
\label{theo:operadic-emb}
The functor $\mathcal O^{\mathfrak H}_{(\blank)}$ is a fully faithful functor.
\end{theorem}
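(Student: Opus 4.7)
\emph{Faithfulness} is immediate: on underlying sets one has $\mathcal O_G(n)=G_n$ for every operadic crossed interval group $G$, and a morphism in either category is uniquely determined by its family of level-wise group homomorphisms $G_n\to H_n$.

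\emph{Fullness} is the real content. Fix a morphism $f\colon\mathcal O_G\to\mathcal O_H$ in $\mathbf{Op}_{\mathrm{gr}}^{/\mathfrak H}$ with components $f_n\colon G_n\to H_n$. These are already group homomorphisms, so to lift $f$ to a morphism of crossed interval groups it remains to verify that (i)~$f_m\circ\varphi^\ast=\varphi^\ast\circ f_n$ for every $\varphi\colon\llangle m\rrangle\to\llangle n\rrangle\in\nabla$, and (ii)~$\varphi^{f(x)}=\varphi^x$ for every $x\in G_n$.

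The crux is to recover the $\nabla$-action on $G$ from the operad $\mathcal O_G$ together with the idempotent family $\{e_n\}$. Every morphism of $\nabla$ factors uniquely as an active morphism following an inert one, so it suffices to treat the two cases separately. The crossed relation $\varphi^\ast(xy)=(\varphi^y)^\ast(x)\varphi^\ast(y)$ forces $\varphi^\ast(e_n)=e_m$ for every $\varphi$, while operadicity combined with \cref{lem:inert-hom} ensures that each inert $\rho^\ast$ is a group homomorphism. Substituting these vanishings into the defining formula for $\gamma_{\mathcal O_G}$ gives the two identities
\[
\gamma_{\mathcal O_G}(x;e_{k_1},\dots,e_{k_n}) = \mu_{\vec k}^\ast(x),
\qquad
\gamma_{\mathcal O_G}(e_3;e_a,x,e_b) = \rho^\ast(x),
\]
the second relying on the elementary observation that every inert morphism $\rho\colon\llangle m\rrangle\to\llangle k\rrangle$ equals some $\rho^{(a,k,b)}_2$ with $a+k+b=m$, since by order-preservation $\rho^{-1}\{1,\dots,k\}$ is necessarily a contiguous block of $\langle m\rangle$. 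Because $f$ preserves operadic composition and units, applying $f$ to either identity produces $f_m(\mu_{\vec k}^\ast(x))=\mu_{\vec k}^\ast(f_n(x))$ and $f_m(\rho^\ast(x))=\rho^\ast(f_k(x))$, and naturality (i) then follows for arbitrary $\varphi^\ast=\rho^\ast\mu^\ast$.

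Condition (ii) drops out of the slice hypothesis: operadicity forces the classifying maps $G\to\mathfrak W_\nabla$ and $H\to\mathfrak W_\nabla$ to factor through $\mathfrak H$ by \cref{lem:inert-hom}, so $\varphi^x$ depends only on the image of $x$ in $\mathfrak H_n$, and the hypothesis that $f$ is a morphism over $\mathfrak H=\mathcal O_{\mathfrak H}$ says exactly that this image is preserved. The one genuinely technical point is the extraction of the active and inert actions from the operad structure displayed above; once those identities are in hand, fullness reduces to $f$ being a unit-preserving operad map with level-wise group components, which is part of the hypothesis.
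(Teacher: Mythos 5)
Your proof is correct and follows essentially the same route as the paper, which reduces the statement to checking that a level-wise family of group homomorphisms is a crossed-interval-group map iff it is an operad map and then defers to the argument of \cref{theo:mongrp-emb}. You supply the details of the reverse direction that the paper leaves implicit — recovering $\mu_{\vec k}^\ast$ and the inert $\rho^\ast$ from $\gamma$ and the units via the active--inert factorization, and handling the action condition through the factorization over $\mathfrak H$ — and these details check out.
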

\begin{proof}
To obtain the result, it suffices to show that, for operadic crossed interval groups $G$ and $H$, a family $\{f:G_n\to H_n\}_{n\in\mathbb N}$ of group homomorphisms forms a map of crossed interval group if and only if it forms a map of operads.
This is verified almost identically to the first part of \cref{theo:mongrp-emb}.
\end{proof}

\begin{theorem}
\label{theo:operadic-local}
The subcategory $\mathbf{CrsOpGrp}\subset\mathbf{CrsGrp}_\nabla^{/\mathfrak H}$ is reflective; i.e. the inclusion functor admits a left adjoint.
\end{theorem}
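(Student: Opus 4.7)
The plan is to apply the General Adjoint Functor Theorem, closely following the template used at the end of the proof of \cref{theo:mongrp-emb}. By \cref{theo:crsgrp-locpres} the category $\mathbf{CrsGrp}_\nabla$ is locally presentable, and a slice of a locally presentable category is again locally presentable, so the ambient $\mathbf{CrsGrp}_\nabla^{/\mathfrak H}$ has the right categorical properties for an Adjoint Functor Theorem to apply. It then suffices to verify that the full inclusion $\mathbf{CrsOpGrp}\hookrightarrow\mathbf{CrsGrp}_\nabla^{/\mathfrak H}$ is closed under small limits and under filtered colimits.

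For closure under limits, I would invoke \cref{theo:crsint-termWeyl}, which says that limits in $\mathbf{CrsGrp}_\nabla$ are computed degreewise in $\mathbf{Grp}$. Given a small diagram $\{G^{(i)}\}$ of operadic crossed interval groups over $\mathfrak H$ with limit $G$, the projections $G\to G^{(i)}$ intertwine every $\rho^\ast$; so for any dissociated inert pair $\rho_j:\llangle n\rrangle\to\llangle k_j\rrangle$ ($j=1,2$) and any $x\in G_{k_1}$, $y\in G_{k_2}$, the commutator $[\rho_1^\ast(x),\rho_2^\ast(y)]$ projects to the trivial element in each $G^{(i)}_n$ by the hypothesis of operadicity, hence is itself trivial in $G_n$. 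The first clause of operadicity (factoring through $\mathfrak H$) is automatic inside the slice.

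For closure under filtered colimits I would again appeal to \cref{theo:crsint-termWeyl}, which tells us that the forgetful functor to $\mathbf{Set}_\nabla^{/\mathfrak W_\nabla}$ creates filtered colimits. For $G=\operatorname{colim}_i G^{(i)}$ with each $G^{(i)}$ operadic, and any $x\in G_{k_1}$, $y\in G_{k_2}$, one lifts $x,y$ to elements of a common stage $G^{(i_0)}$, where $[\rho_1^\ast(x),\rho_2^\ast(y)]$ is already trivial; passing to $G$ preserves this equation. Together, the two closure properties and the local presentability of the ambient category license an invocation of the Adjoint Functor Theorem (Theorem~1.66 in \cite{AdamekRosicky1994}), which yields the desired left adjoint.

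I do not expect a serious obstacle here, because the operadicity condition is a family of equations of the form $[\rho_1^\ast(x),\rho_2^\ast(y)]=e$ involving only \emph{inert} morphisms $\rho_j$, for which \cref{lem:inert-hom} guarantees that $\rho_j^\ast$ is already a group homomorphism; thus the potentially awkward twisted interaction of $\varphi^\ast$ with the group structure for a general $\varphi\in\nabla$ never enters the argument. The one point deserving mild care is confirming that taking the slice over $\mathfrak H$ preserves the relevant limit and filtered colimit computations, but this is immediate from the creation property in \cref{theo:crsint-termWeyl}.
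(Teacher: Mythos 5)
Your proof is correct, but it takes a genuinely different route from the paper's. You establish reflectivity abstractly: local presentability of the ambient slice plus closure of $\mathbf{CrsOpGrp}$ under small limits and filtered colimits, followed by an appeal to the adjoint functor / reflection theorem of \cite{AdamekRosicky1994}. The closure verifications are sound: operadicity is a family of equations $[\rho_1^\ast(x),\rho_2^\ast(y)]=e$ in which only inert morphisms occur, the limit projections are jointly monic (note that a limit in the slice also projects to $\mathfrak H$, which is itself operadic, so empty limits and fibre products over $\mathfrak H$ cause no trouble), and filtered colimits are created degreewise, so every instance of the equation can be checked at a finite stage and is preserved by the transition maps. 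One small misattribution: \cref{theo:crsint-termWeyl} says that \emph{colimits} are computed degreewise in $\mathbf{Grp}$, while \emph{limits} are created by the forgetful functor to $\mathbf{Set}_\nabla^{/\mathfrak W_\nabla}$; the joint monicity you need follows from the latter, not from a degreewise-in-$\mathbf{Grp}$ description of limits. The paper instead constructs the reflection explicitly: it defines $K(G)_n\subset G_n$ as the subgroup generated by the offending commutators, proves via \cref{lem:dissoc-comp} and \cref{lem:dissoc-rstab} that $\varphi^\ast$ carries generators to generators so that $K(G)$ is a crossed interval subgroup, and obtains the reflection $\widetilde G$ as the pushout of $\ast\leftarrow K(G)\hookrightarrow G$. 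That extra work buys an explicit formula for the ``operadification,'' which the paper exploits later (e.g.\ in \cref{ex:Delta->nabla} and in the description of the left adjoint to $\widehat\Psi$ at the end of \cref{sec:operintgrp}); your argument is shorter and avoids the commutator computation, but yields no such description.
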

\begin{proof}
For a crossed interval group $G$ over $\mathfrak H$, define $K(G)_n\subset G_n$ to be the subgroup generated by the commutators
\[
\begin{split}
[\rho_1^\ast(x_1),\rho_2^\ast(x_2)]
&= \rho_1^\ast(x_1)\rho_2^\ast(x_2)\rho_1^\ast(x_1)^{-1}\rho_2^\ast(x_2)^{-1} \\
&= \rho_1^\ast(x_1)\rho_2^\ast(x_2)\rho_1^\ast(x_1^{-1})\rho_2^\ast(x_2^{-1})
\end{split}
\]
for dissociated inert morphisms $\rho_i:\llangle n\rrangle\to\llangle k_i\rrangle$ and $x_i\in G_{k_i}$ for $i=1,2$.
We assert $K(G)=\{K(G)_n\}_n$ forms a crossed interval subgroup of $G$.
Indeed, for a morphism $\varphi:\llangle m\rrangle\to\llangle n\rrangle\in\nabla$, in view of \cref{lem:dissoc-rstab}, we have
\begin{equation}
\label{eq:prf:operadic-local:comm}
\begin{split}
&\varphi^\ast([\rho_1^\ast(x_1),\rho_2^\ast(x_2)]) \\
&\begin{multlined}
= (\rho_1\varphi^{\rho_2^\ast(x_2)\rho_1^\ast(x_1^{-1})\rho_2^\ast(x_2^{-1})})^\ast(x_1)(\rho_2\varphi^{\rho_1^\ast(x_1^{-1})\rho_2^\ast(x_2^{-1})})^\ast(x_2) \\
\cdot(\rho_1\varphi^{\rho_2^\ast(x_2^{-1})})^\ast(x_1^{-1})(\rho_2\varphi)^\ast(x_2^{-1})
\end{multlined} \\
&= (\rho_1\varphi^{\rho_1^\ast(x_1^{-1})\rho_2^\ast(x_2^{-1})})^\ast(x_1)(\rho_2\varphi^{\rho_2^\ast(x_2^{-1})})^\ast(x_2)(\rho_1\varphi)^\ast(x_1^{-1})(\rho_2\varphi)^\ast(x_2^{-1}) \\
&= ((\rho_1\varphi)^{x_1^{-1}})^\ast(x_1)((\rho_2\varphi)^{x_2^{-1}})^\ast(x_2)(\rho_1\varphi)^\ast(x_1^{-1})(\rho_2\varphi)^\ast(x_2^{-1}) \\
&= [((\rho_1\varphi)^{x_1^{-1}})^\ast(x_1),((\rho_2\varphi)^{x_2^{-1}})^\ast(x_2)]\ .
\end{split}
\end{equation}
Using \cref{lem:dissoc-comp}, one can see morphisms $(\rho_1\varphi)^{x_1^{-1}}$ and $(\rho_2\varphi)^{x_2^{-1}}$ are dissociated, so \eqref{eq:prf:operadic-local:comm} is one of generators of $K(G)_m$.
Hence, $K(G)$ is a crossed interval subgroup of $G$ by virtue of \cref{theo:crsint-termWeyl}.

Now, if $H$ is an operadic crossed interval group, then every map $f:G\to H$ of crossed interval groups sends the generators of $K(G)$ to the unit.
This in particular implies the map $K(G)\to\mathfrak H$ factors through the initial crossed interval group $\ast\subset\mathfrak H$, and the unique map $K(G)\to\ast$ of interval sets is actually a map of crossed interval groups.
We define a crossed interval group $\widetilde G$ by the following pushout square in $\mathbf{CrsGrp}_\nabla^{/\mathfrak H}$:
\[
\vcenter{
  \xymatrix{
    K(G) \ar[r] \ar@{^(->}[]+D+(0,-1);[d] & \ast \ar[d] \\
    G \ar[r] & \widetilde G }}
\ .
\]
It is obvious that $\widetilde G$ is operadic.
Moreover, the observation above shows that every map $G\to H$ of crossed interval groups uniquely factors through $G\to\widetilde G$ provided $H$ is operadic.
In other words, the assignment $G\mapsto\widetilde G$ gives the left adjoint of the inclusion $\mathbf{CrsOpGrp}\hookrightarrow\mathbf{CrsGrp}_\nabla^{/\mathfrak H}$, which is exactly the required result.
\end{proof}

\begin{example}
\label{ex:Delta->nabla}
Recall that we have a functor
\[
\mathfrak j:\Delta\to\nabla
\ ;\quad [n]\mapsto \{-\infty\}\star[n]\star\{\infty\}\cong\llangle n+1\rrangle
\]
from the simplex category $\Delta$.
By Theorem~5.14 in \cite{Yoshidalimcolim}, it induces a left adjoint functor
\[
\mathfrak j_\flat^{\mathfrak H}:\mathbf{CrsGrp}_\Delta\to\mathbf{CrsGrp}_\nabla^{/\mathfrak H}\ .
\]
According to the computations in \cite{Yoshidalimcolim} (precisely, Example~5.12 and 5.13), for a crossed simplicial group $G$, the crossed interval group $\mathfrak j^{\mathfrak H}_\flat G$ is described as follows:
for each $n\in\mathbb N$, the group $(\mathfrak j^{\mathfrak H}_\flat G)_n$ is the one generated by pairs $(x,\rho)$ of an inert morphism $\rho:\llangle n\rrangle\to\llangle k\rrangle$ and an element $x\in G_{k-1}$, with assuming $G_{-1}=\pi_0G$, which are subject to relation
\begin{equation}
\label{eq:jflat-rho}
(xy,\rho) \sim (x,\rho)(y,\rho)\ .
\end{equation}
In particular, $(\mathfrak j^{\mathfrak H}_\flat G)_n$ is the free product of copies of $G_{k-1}$ indexed by inert morphisms $\rho:\llangle n\rrangle\to\llangle k\rrangle$ with $k$ varying.
To obtain the ``operadification'' of $\mathfrak j^{\mathfrak H}_\flat G$, we only have to force the relation
\[
(x_1,\rho_1)(x_2,\rho_2) \sim (x_2,\rho_2)(x_1,\rho_1)
\]
for dissociated inert morphisms $\rho_i:\llangle n\rrangle\to\llangle k_i\rrangle$ and elements $x_i\in G_{k_i-1}$ in addition to \eqref{eq:jflat-rho}.
\end{example}

As seen in \cref{ex:grpop-operadic}, the embedding $\widehat\Psi:\mathbf{GrpOp}\to\mathbf{CrsGrp}_\nabla^{/\mathfrak H}$ factors through the subcategory $\mathbf{CrsOpGrp}\hookrightarrow\mathbf{CrsGrp}_\nabla^{/\mathfrak H}$.
To conclude the section, we compute the essential image of $\widehat\Psi$.
This is essentially achieved by interpreting the condition \eqref{eq:grpop-interchange} in terms of crossed interval groups.

\begin{definition}
A crossed interval group $G$ is said to be \emph{tame} if it satisfies the following condition:
for each sequence $\vec k=(k_1,\dots,k_n)$ and for each $x\in G_n$ and $x_i\in G_{k_i}$ for $1\le i\le n$, one has
\[
\mu_{\vec k}^\ast(x)\cdot(\rho^{(\vec k)}_i)^\ast(x_i)
= (\rho^{(x_\ast(\vec k))}_{x(i)})^\ast(x_i)\cdot\mu_{\vec k}^\ast(x)\ ,
\]
where we put $x_\ast(\vec k)=(k_{x^{-1}(1)},\dots,k_{x^{-1}(n)})$.
\end{definition}

\begin{theorem}
\label{theo:tame-embimg}
A crossed interval group $G$ belongs to the essential image of the functor $\widehat\Psi:\mathbf{GrpOp}\to\mathbf{CrsGrp}_\nabla$ if and only if it is operadic and tame and lies over $\mathfrak S\subset\mathfrak W_\nabla$.
\end{theorem}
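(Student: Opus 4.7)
I would prove both directions using the operad $\mathcal O_G$ attached to an operadic crossed interval group $G$.

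\emph{Necessity.} Suppose $G = \widehat\Psi(\mathcal G)$ for a group operad $\mathcal G$. That $G$ lies over $\mathfrak S$ is built into $\widehat\Psi$, and operadicity was established in \cref{ex:grpop-operadic}. For tameness I first unwind the definition of $\Psi$ together with \cref{prop:grpop-unit} to obtain the formulas $\mu_{\vec k}^\ast(x) = \gamma(x; e_{k_1}, \dots, e_{k_n})$ and $(\rho_i^{(\vec k)})^\ast(x_i) = \gamma(e_n; e_{k_1}, \dots, e_{k_{i-1}}, x_i, e_{k_{i+1}}, \dots, e_{k_n})$ inside $\widehat\Psi(\mathcal G)$. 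Two specializations of the interchange law \eqref{eq:grpop-interchange} then apply: taking the outer pair $(x,y) = (x, e_n)$ with all inner factors equal to the relevant identities except $y_i = x_i$ rearranges the right-hand side of \eqref{eq:grpop-interchange} into $\mu_{\vec k}^\ast(x) \cdot (\rho_i^{(\vec k)})^\ast(x_i)$, while $(x,y) = (e_n, x)$ with inner factors all identities except $x_i$ rearranges it into $(\rho_{x(i)}^{(x_\ast \vec k)})^\ast(x_i) \cdot \mu_{\vec k}^\ast(x)$. Both left-hand sides equal $\gamma(x; e_{k_1}, \dots, e_{k_{i-1}}, x_i, e_{k_{i+1}}, \dots, e_{k_n})$, yielding the tameness identity.

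\emph{Sufficiency.} Given $G$ operadic, tame, and over $\mathfrak S$, set $\mathcal G := \mathcal O_G$; the degreewise group structure and the map $\mathcal O_G \to \mathcal O_{\mathfrak S} = \mathfrak S$ induced by the structure map (via \cref{ex:O-grpop} and functoriality of $G \mapsto \mathcal O_G$) supply all the data of a group operad except the interchange law. Verifying \eqref{eq:grpop-interchange} is the crux. I would expand $\gamma_{\mathcal O_G}(xy; x_1y_1, \dots, x_ny_n) = \mu_{\vec k}^\ast(xy) \cdot \prod_i (\rho_i^{(\vec k)})^\ast(x_i y_i)$ and manipulate in five steps: (i) use $\mu_{\vec k}^\ast(xy) = (\mu_{y_\ast \vec k})^\ast(x) \cdot \mu_{\vec k}^\ast(y)$, valid because $G$ lies over $\mathfrak S$ so that $\mu_{\vec k}^y = \mu_{y_\ast \vec k}$; (ii) split $(\rho_i^{(\vec k)})^\ast(x_i y_i) = (\rho_i^{(\vec k)})^\ast(x_i)(\rho_i^{(\vec k)})^\ast(y_i)$, since each inert $(\rho_i^{(\vec k)})^\ast$ is a group homomorphism by \cref{lem:inert-hom}; (iii) commute $x$-factors past $y$-factors using operadicity and pairwise dissociativity of distinct $\rho_i^{(\vec k)}$, $\rho_j^{(\vec k)}$; (iv) apply tameness to move $\mu_{\vec k}^\ast(y)$ past each $(\rho_i^{(\vec k)})^\ast(x_i)$, producing $(\rho_{y(i)}^{(y_\ast \vec k)})^\ast(x_i)$; (v) reorder these into the standard order $j = 1, \dots, n$, again by operadicity. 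The resulting expression is exactly the right-hand side of \eqref{eq:grpop-interchange}, as $\gamma(x; x_{y^{-1}(1)}, \dots, x_{y^{-1}(n)}) \cdot \gamma(y; y_1, \dots, y_n)$.

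Finally, $\widehat\Psi(\mathcal O_G)$ and $G$ agree as crossed interval groups over $\mathfrak S$: the degreewise groups coincide by construction, and for the interval structure, expanding $\varphi^\ast_{\widehat\Psi}(x) = \gamma_{\mathcal O_G}(e_3; e_{k^{(\varphi)}_{-\infty}}, \gamma_{\mathcal O_G}(x; e_{k^{(\varphi)}_1}, \dots, e_{k^{(\varphi)}_n}), e_{k^{(\varphi)}_\infty})$ in $\mathcal O_G$ and collapsing the trivial-unit factors reduces it to $(\mu_{\vec k'} \rho^{(\vec l)}_2)^\ast(x)$ for the natural $\vec k', \vec l$, while a direct check in $\nabla$ confirms $\mu_{\vec k'} \rho^{(\vec l)}_2 = \varphi$. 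The main obstacle is steps (iv)--(v) of the interchange-law verification, since that is where all three hypotheses must be orchestrated simultaneously and the $y$-induced reindexing of inert morphisms must be kept consistent.
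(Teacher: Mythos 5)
Your proposal is correct and follows essentially the same route as the paper: necessity via the two specializations of the interchange law \eqref{eq:grpop-interchange} (the paper writes this as a single chain of equalities), and sufficiency by expanding $\gamma_{\mathcal O_G}(xy;x_1y_1,\dots,x_ny_n)$ and reorganizing it using exactly the ingredients you list in steps (i)--(v), with the final identification $\widehat\Psi(\mathcal O_G)\cong G$. Your write-up is in fact slightly more explicit than the paper's about where operadicity is needed to commute and reorder the dissociated inert factors, but the argument is the same.
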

\begin{proof}
If $G=\widehat\Psi(\mathcal G)$ for a group operad $\mathcal G$, then for $\vec k=(k_1,\dots,k_n)$, $x\in G_n$, and $x_i\in G_{k_i}$ for $1\le i\le n$, we have
\[
\begin{split}
\mu_{\vec k}^\ast(x)\cdot(\rho^{(\vec k)}_i)^\ast(x_i)
&= \gamma(x;e_{k_1},\dots,\overset{\substack{i\cr\smile}}{x_i},\dots,e_{k_n}) \\
&= \gamma(e_n;e_{k_{x^{-1}(1)}},\dots,\overset{\substack{x(i)\cr\smile}}{x_i},\dots,e_{k_{x^{-1}(n)}})\cdot\gamma(x;e_{k_1},\dots,e_{k_n}) \\
&= (\rho^{(x_\ast(\vec k))}_{x(i)})^\ast(x_i)\cdot\mu_{\vec k}^\ast(x)\ .
\end{split}
\]
Hence, $\widehat\Psi(\mathcal G)$ is tame as well as operadic.

Conversely, suppose $G$ is an operadic and tame crossed interval group over $\mathfrak S$.
We assert $\mathcal O_G$ is a group operad.
Indeed, in view of \cref{ex:O-grpop}, the operad $\mathcal O_G$ admits a canonical map $\mathcal O_G\to\mathcal O_{\mathfrak S}=\mathfrak S$ of operads which is level-wise group homomorphism.
In addition, since $G$ is tame, for $x,y\in G_n$ and $x_i,y_i\in G_{k_i}$, we have
\[
\begin{split}
\gamma(xy;x_1y_1,\dots,x_ny_n)
&= \mu_{\vec k}^\ast(xy)(\rho^{(\vec k)}_1)^\ast(x_1y_1)\dots(\rho^{(\vec k)}_n)^\ast(x_ny_n) \\
&\begin{multlined}
= \mu_{y_\ast(\vec k)}^\ast(x)\mu_{\vec k}^\ast(y) \\
\cdot(\rho^{(\vec k)}_1)^\ast(x_1)(\rho^{(\vec k)}_1)^\ast(y_1)\dots(\rho^{(\vec k)}_n)^\ast(x_n)(\rho^{(\vec k)}_n)^\ast(y_n)
\end{multlined} \\
&\begin{multlined}
= \mu_{y_\ast(\vec k)}^\ast(x)(\rho^{(y_\ast(\vec k))}_{y(1)})^\ast(x_1)\dots(\rho^{(y_\ast(\vec k))}_{y(n)})^\ast(x_n) \\
\cdot \mu_{\vec k}^\ast(y)(\rho^{(\vec k)}_1)^\ast(y_1)\dots(\rho^{(\vec k)}_n)^\ast(y_n)
\end{multlined} \\
&= \gamma(x;x_{y^{-1}(1)},\dots,x_{y^{-1}(n)})\gamma(y;y_1,\dots,y_n)\ .
\end{split}
\]
This implies $\mathcal O_G$ satisfies the condition \eqref{eq:grpop-interchange}, and it is a group operad.
Now, it is clear that $\widehat\Psi(\mathcal O_G)\cong G$, and this completes the proof.
\end{proof}

Similarly to the operadicity, for each crossed interval group $G$ over $\mathfrak S$, one can find a crossed interval subgroup $L(G)\subset G$ so that
\begin{enumerate}[label={\rm(\roman*)}]
  \item $L(G)$ is non-crossed; i.e. there is a map $L(G)\to\ast$ of crossed interval groups;
  \item $G\mapsto L(G)$ is functorial; i.e. every map $f:G\to H$ of crossed interval groups over $\mathfrak S$ restricts to $L(G)\to L(H)$;
  \item for each $n\in\mathbb N$, $L(G)_n\subset G_n$ contains all the elements of the form
\[
\mu_{\vec k}^\ast(x)\cdot(\rho^{(\vec k)}_i)^\ast(x_i)\cdot \mu_{\vec k}^\ast(x)^{-1}\cdot (\rho^{(x_\ast(\vec k))}_{x(i)})^\ast(x_i)^{-1}
\]
for $\vec k=(k_1,\dots,k_m)$ with $\sum k_i=n$, $x\in G_m$, and $x_i\in G_{k_i}$;
  \item $L(G)$ is trivial provided $G$ is tame.
\end{enumerate}
Then, the ``taming'' $G^{\mathsf t}$ of $G$ is obtained by the following pushout square in $\mathbf{CrsGrp}_\nabla^{/\mathfrak S}$
\[
\vcenter{
  \xymatrix{
    L(G) \ar[r] \ar@{^(->}[]+D+(0,-1);[d] & \ast \ar[d] \\
    G \ar[r] & G^{\mathsf t} }}
\quad,
\]
and it gives rise to a left adjoint to the inclusion of the full subcategory $\mathbf{CrsGrp}_\nabla^{\mathsf{tame}}\subset\mathbf{CrsGrp}_\nabla^{/\mathfrak S}$ spanned by tame crossed interval groups.
Moreover, since the operadification and the taming commute with each other, the latter is restricted so as to induce the left adjoint to the functor $\widehat\Psi$:
\[
(\blank)^{\mathsf t}:
\vcenter{
  \xymatrix{
    \mathbf{CrsOpGrp} \ar@/^.2pc/[]+R+(0,1);[r]+L+(0,1) \ar@{}[r]|-\perp & \mathbf{CrsOpGrp}^{\mathsf{tame}} \ar@/^.2pc/[]+L+(0,-1);[l]+R+(0,-1) }}
:\widehat\Psi\ ,
\]
where $\mathbf{CrsOpGrp}^{\mathsf{tame}}=\mathbf{CrsOpGrp}\cap\mathbf{CrsGrp}_\nabla^{\mathsf{tame}}$.
Note that, in view of \cref{theo:tame-embimg}, the functor $\widehat\Psi$ induces an equivalence (actually an isomorphism) $\mathbf{GrpOp}\simeq\mathbf{CrsOpGrp}^{\mathsf{tame}}$.
Hence, we obtain an explicit description of the left adjoint to $\widehat\Psi$, which has been proved to exist in \cref{theo:mongrp-emb}.

\begin{remark}
It was proved in Theorem~3.5 in \cite{Gur15} that the category $\mathbf{GrpOp}$ is locally presentable.
The observation above gives us an alternative proof of this fact:
in view of \cref{theo:mongrp-emb}, we may regard $\mathbf{GrpOp}$ as a reflective subcategory of $\mathbf{CrsGrp}^{/\mathfrak S}_\nabla$.
It is verified that operadic crossed interval groups and tame ones are closed under filtered colimits respectively.
Then, $\mathbf{GrpOp}$ is locally presentable thanks to Corollary to Theorem~2.48 in \cite{AdamekRosicky1994}.
\end{remark}

\section{Associative algebras}
\label{sec:assalg}

In this final section, we give an application of the results established in the previous sections.

We begin with the following observation.
Let $\mathcal C$ be a monoidal category.
Then, the category of monoid objects of $\mathcal C$ is equivalent to the category $\mathbf{MultCat}(\ast,\mathcal C^\otimes)$ of multifunctors from the terminal operad to the multicategory $\mathcal C^\otimes$ associated to $\mathcal C$.
We write
\[
\mathbf{Alg}(\mathcal C):=\mathbf{MultCat}(\ast,\mathcal C^\otimes)\ .
\]
The assignment gives rise to a $2$-functor
\[
\mathbf{Alg}(\blank):
\mathbf{MonCat}
\to\mathbf{Cat}\ .
\]
It was shown in Section~\expandafter\uppercase\expandafter{\romannumeral7\relax}.5 in \cite{McL98} that the $2$-functor is \emph{represented} by the category $\widetilde\Delta$ of finite ordinals and order-preserving maps with the join $\star$ as the monoidal structure.
Indeed, as easily checked, the following data defines a multifunctor $M:\ast\to\widetilde\Delta^\otimes$:
\begin{itemize}
  \item for the unique object $\ast$ of $\ast$, we set $M(\ast):=\langle 1\rangle$;
  \item for the unique operations $\mu_n\in\ast(n)$, we set
\[
M(\mu_n)
\in\widetilde\Delta^\otimes(\langle 1\rangle\dots\langle 1\rangle;\langle 1\rangle)
= \widetilde\Delta(\langle 1\rangle\star\dots\star\langle 1\rangle,\langle 1\rangle)
= \widetilde\Delta(\langle n\rangle,\langle 1\rangle)
\]
to be the unique morphism to the terminal object $\langle 1\rangle$.
\end{itemize}
Then, the precomposition with $M$ gives rise to a functor
\[
\mathbf{MonCat}(\widetilde\Delta,\mathcal C)
\xrightarrow{(\blank)^\otimes}\mathbf{MultCat}(\widetilde\Delta^\otimes,\mathcal C^\otimes)
\xrightarrow{M^\ast}\mathbf{Alg}(\mathcal C)
\]
for each monoidal category $\mathcal C$, which is claimed to be an equivalence.

On the other hand, if $\mathcal G$ is a group operad, then we can consider the composition
\begin{equation}
\label{eq:symmon-algfunc}
\mathbf{MonCat}_{\mathcal G}
\xrightarrow{\mathit{forget}} \mathbf{MonCat}
\xrightarrow{\mathbf{Alg}(\blank)} \mathbf{Cat}\ .
\end{equation}
We discuss the representability of this $2$-functor.
We first have to construct a candidate of the representing object.
In view of \cref{theo:mongrp-emb}, we regard the category $\mathbf{CrsGrp}$ as a full subcategory of $\mathbf{CrsGrp}_{\nabla}^{/\mathfrak S}$, so we may identify $\mathcal G$ with its image $\Psi^{\mathfrak S}(\mathcal G)$ in $\mathbf{CrsGrp}_{\nabla}^{/\mathfrak S}$.
On the other hand, we regard the category $\widetilde\Delta$ as a subcategory of $\nabla$ with all the objects and morphisms $\varphi:\llangle m\rrangle\to\llangle n\rrangle$ with $\varphi^{-1}\{\pm\infty\}=\{\pm\infty\}$, and put $\mathfrak J:\widetilde\Delta\to\nabla$ the inclusion.
Note that crossed $\widetilde\Delta$-groups are usually called \emph{augmented crossed simplicial groups} since the canonical embedding $\Delta\to\widetilde\Delta$ induces a fully faithful functor $\mathbf{CrsGrp}_{/\Delta}\to\mathbf{CrsGrp}_{/\widetilde\Delta}$ whose essential image consists of $G$ with $G_0$ trivial in the degree in $\widetilde\Delta\subset\nabla$ (not in $\Delta$).
Hence, pulling back along $\mathfrak J$, we obtain an augmented crossed simplicial group $\mathfrak J^\natural\mathcal G$ by virtue of Theorem~5.14 in \cite{Yoshidalimcolim} so as to form the total category $\widetilde\Delta_{\mathfrak J^\natural\mathcal G}$ (see \cref{rem:totcat}).

\begin{lemma}
\label{lem:augDelta-totmon}
Let $\mathcal G$ be a group operad.
Then, the monoidal structure on $\widetilde\Delta$ extends to the total category $\widetilde\Delta_{\mathfrak J^\natural\mathcal G}$ so that it is $\mathcal G$-symmetric.
\end{lemma}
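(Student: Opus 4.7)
My plan is to first extend the join $\star$ on $\widetilde\Delta$ to a monoidal structure on the total category $\widetilde\Delta_{\mathfrak J^\natural\mathcal G}$, and then to exhibit the $\mathcal G$-symmetry using the operadic composition in $\mathcal G$ itself. Given morphisms $(\varphi,x):\llangle a\rrangle\to\llangle b\rrangle$ and $(\psi,y):\llangle c\rrangle\to\llangle d\rrangle$ in $\widetilde\Delta_{\mathfrak J^\natural\mathcal G}$, I would set
\[
(\varphi,x)\star(\psi,y) := (\varphi\star\psi,\,\gamma_{\mathcal G}(e_2;x,y))\ .
\]
The monoidal unit is $(\mathrm{id},e_0)$ on $\llangle 0\rrangle$, and identities pair with identities because $\gamma(e_2;e_m,e_n)=e_{m+n}$ by \cref{prop:grpop-unit}. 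The functoriality of $\star$ with respect to composition in the total category (\cref{rem:totcat}) reduces to two checks. The first, $(\varphi\star\psi)^{\gamma(e_2;x,y)} = \varphi^x\star\psi^y$, reflects that the crossed $\mathcal G$-action on morphisms of $\widetilde\Delta\subset\nabla$ factors through $\mathcal G\to\mathfrak S$, and that this map sends $\gamma(e_2;x,y)$ to the block sum of the underlying permutations of $x$ and $y$. The second, comparing the $\mathcal G$-coordinates after substituting the composition rule, is a direct instance of the interchange identity \cref{eq:grpop-interchange} at $n=2$. Associativity of $\star$ then reduces to $\gamma(e_2;\gamma(e_2;x,y),z)=\gamma(e_3;x,y,z)=\gamma(e_2;x,\gamma(e_2;y,z))$, which is operadic associativity combined with \cref{prop:grpop-unit}.

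For the $\mathcal G$-symmetric structure, for $x\in\mathcal G(n)$ and $X_i=\llangle m_i\rrangle$, I would define
\[
\Theta^x_{X_1,\ldots,X_n}:=\bigl(\iota,\,\gamma_{\mathcal G}(x;e_{m_1},\ldots,e_{m_n})\bigr)\,:\ X_1\star\cdots\star X_n\longrightarrow X_{x^{-1}(1)}\star\cdots\star X_{x^{-1}(n)}\ ,
\]
where $\iota$ is the canonical identification of both sides with $\llangle\sum m_i\rrangle\in\widetilde\Delta$. The coherences $\Theta^{e_n}=\mathrm{id}$ and $\Theta^{xy}=\Theta^x\Theta^y$ then follow from \cref{prop:grpop-unit} and \cref{eq:grpop-interchange} respectively; these $\Theta$'s, together with the $\star$-product constructed above, are to be packaged into the multifunctor $\mathtt{sym}:\widetilde\Delta_{\mathfrak J^\natural\mathcal G}^\otimes\rtimes\mathcal G\to\widetilde\Delta_{\mathfrak J^\natural\mathcal G}^\otimes$ required by the definition of a $\mathcal G$-symmetric structure, and the two coherence diagrams in \cref{eq:Xsym-cohdiags} will be verified by a direct expansion using operadic associativity and \cref{prop:grpop-unit}.

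The main obstacle I foresee is the naturality of $\Theta^x$ in the $X_i$ with respect to arbitrary morphisms in the total category: for $(\varphi_i,x_i):\llangle m'_i\rrangle\to\llangle m_i\rrangle$ one must verify
\[
\Theta^x_{X_1,\ldots,X_n}\circ\bigl((\varphi_1,x_1)\star\cdots\star(\varphi_n,x_n)\bigr)
= \bigl((\varphi_{x^{-1}(1)},x_{x^{-1}(1)})\star\cdots\star(\varphi_{x^{-1}(n)},x_{x^{-1}(n)})\bigr)\circ\Theta^x_{X'_1,\ldots,X'_n}\ .
\]
Unwinding both sides by the total category composition law, the $\widetilde\Delta$-coordinates match because $\mathfrak S$ acts on joins by block permutations (this is essentially the description of $\gamma_{\mathfrak S}$ given at the start of \cref{sec:grpoper}), while the $\mathcal G$-coordinates match precisely by \cref{eq:grpop-interchange}. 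Keeping the indexing consistent here is the only delicate point; everything else will be a routine bookkeeping in the total category.
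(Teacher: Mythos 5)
Your proposal is correct and follows essentially the same route as the paper: the join is extended by $(\varphi_1,x_1)\star(\varphi_2,x_2)=(\varphi_1\star\varphi_2,\gamma(e_2;x_1,x_2))$, and the $\mathcal G$-symmetry is given by acting with $\gamma(u;e_{k_1},\dots,e_{k_n})$ — your $\Theta^u=(\mathrm{id},\gamma(u;e_{k_1},\dots,e_{k_n}))$ packaged by precomposition is exactly the paper's multifunctor $\mathtt{sym}((\varphi,x),u)=(\varphi,x\cdot\gamma(u;e_{k_1},\dots,e_{k_n}))$. The naturality check you flag as the delicate point is precisely the single composite identity the paper verifies, resting on the interchange law \eqref{eq:grpop-interchange} and the block-permutation action of $\mathfrak S$ on joins.
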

\begin{proof}
Since the inclusion $\widetilde\Delta\to\widetilde\Delta_{\mathfrak J^\natural\mathcal G}$ is bijective on objects, we have to extends the monoidal product on morphisms.
Recall that morphisms of $\widetilde\Delta_{\mathfrak J^\natural\mathcal G}$ are of the form $(\varphi,x):\langle m\rangle\to\langle n\rangle$ with $\varphi:\langle m\rangle\to\langle n\rangle\in\widetilde\Delta$ and $x\in\mathcal G(m)$.
For morphisms $(\varphi_i,x_i):\langle m_i\rangle\to\langle n_i\rangle$ for $1=1,2$, we set
\[
(\varphi_1,x_1)\star(\varphi_2,x_2)
:=\bigl(\varphi_1\star\varphi_2,\gamma(e_2;x_1,x_2)\bigr)\ .
\]
The functoriality is obvious, and the strict associativity follows from those of the join and the composition operation in the operad $\mathcal G$.

To introduce a $\mathcal G$-symmetric structure, note that we have
\[
(\widetilde\Delta^\otimes_{\mathfrak J^\natural\mathcal G}\rtimes\mathcal G)(\langle k_1\rangle\dots\langle k_n\rangle;\langle m\rangle)
= \widetilde\Delta_{\mathfrak J^\natural\mathcal G}(\langle k_1+\dots+k_n\rangle,\langle m\rangle)\times\mathcal G(n).
\]
We then consider the map
\begin{equation}
\label{eq:prf:augDelta-totmon:Gsym}
\begin{array}{ccc}
  (\widetilde\Delta^\otimes_{\mathfrak J^\natural\mathcal G}\rtimes\mathcal G)(\langle k_1\rangle\dots\langle k_n\rangle;\langle m\rangle) &\to& \widetilde\Delta^\otimes_{\mathfrak J^\natural\mathcal G}(\langle k_1\rangle\dots\langle k_n\rangle;\langle m\rangle) \\[1ex]
  ((\varphi,x),u) &\mapsto& \bigl(\varphi,x\cdot\gamma(u;e_{k_1},\dots,e_{k_n}\bigr)\ .
\end{array}
\end{equation}
Using the description of the crossed interval group $\mathfrak S$ in \cref{ex:crsint-symgrp}, one can check the following formula:
\[
\begin{split}
&\bigl(\varphi,x\cdot\gamma(u;e_{k_1},\dots,e_{k_n})\bigr)
\circ\bigl(\varphi_1\star\dots\star\varphi_n,\gamma(e_n;x_1,\dots,x_n)\bigr) \\
&=
\begin{multlined}[t]
\bigl(\varphi\circ(\varphi_{u^{-1}(1)}\star\dots\star\varphi_{u^{-1}(n)})^x, \\
(\varphi_{u^{-1}(1)}\star\dots\star\varphi_{u^{-1}(n)})^\ast(x)\cdot\gamma(u;x_1,\dots,x_n)\bigr)\ ,
\end{multlined}
\end{split}
\]
which implies \eqref{eq:prf:augDelta-totmon:Gsym} actually defines an identity-on-object multifunctor $\widetilde\Delta_{\mathfrak J^\natural\mathcal G}^\otimes\rtimes\mathcal G\to\widetilde\Delta_{\mathfrak J^\natural\mathcal G}^\otimes$.
It is obvious that it exhibits $\widetilde\Delta_{\mathfrak J^\natural\mathcal G}$ as a $\mathcal G$-symmetric monoidal category.
\end{proof}

We assert that the $\mathcal G$-symmetric monoidal category $\widetilde\Delta^\otimes_{\mathfrak J^\natural\mathcal G}$ in fact represents the $2$-functor \eqref{eq:symmon-algfunc}.
To see this, notice that, since the $2$-category $\mathbf{MultCat}_{\mathcal G}$ is the category of $2$-algebras over the $2$-monad $(\blank)\rtimes\mathcal G$, we have an isomorphism
\[
\mathbf{MultCat}(\mathcal M,\mathcal N)
\cong\mathbf{MultCat}_{\mathcal G}(\mathcal M\rtimes\mathcal G,\mathcal N)
\]
of categories for each $\mathcal M\in\mathbf{MultCat}$ and $\mathcal N\in\mathbf{MultCat}_{\mathcal G}$.
Hence, for $\mathcal G$-symmetric monoidal category $\mathcal C$, we have a canonical isomorphism
\[
\mathbf{Alg}(\mathcal C)
= \mathbf{MultCat}(\ast,\mathcal C^\otimes)
\cong\mathbf{MultCat}_{\mathcal G}(\mathcal G,\mathcal C^\otimes)\ ,
\]
here we use the isomorphism $\ast\rtimes\mathcal G\cong\mathcal G$.
We define a multifunctor $M_{\mathcal G}:\mathcal G\to\widetilde\Delta_{\mathfrak J^\natural\mathcal G}$ as follows:
\begin{itemize}
  \item for the unique object $\ast$ of $\mathcal G$, set $M_{\mathcal G}(\ast):=\langle 1\rangle$;
  \item for each $x\in\mathcal G(n)$, we put $M_{\mathcal G}(x):=(\mu_n,x)\in\widetilde\Delta_{\mathfrak J^\natural\mathcal G}(\langle n\rangle,\langle 1\rangle)$, where $\mu_n:\langle n\rangle\to\langle 1\rangle\in\widetilde\Delta$ is the unique map into the terminal object in $\widetilde\Delta$.
\end{itemize}
It is straightforward from the description \eqref{eq:prf:augDelta-totmon:Gsym} that $M_{\mathcal G}$ is even $\mathcal G$-symmetric.

\begin{proposition}
\label{prop:Alg-Gsymrep}
Let $\mathcal G$ be a group operad.
Then, for every $\mathcal G$-symmetric monoidal category $\mathcal C$, the functor
\begin{equation}
\label{eq:Alg-Gsymrep:ind}
\begin{multlined}
\mathbf{MonCat}_{\mathcal G}(\widetilde\Delta_{\mathfrak J^\natural\mathcal G},\mathcal C)
\xrightarrow{(\blank)^\otimes}\mathbf{MultCat}_{\mathcal G}(\widetilde\Delta_{\mathfrak J^\natural\mathcal G}^\otimes,\mathcal C^\otimes) \\
\xrightarrow{M_{\mathcal G}^\ast} \mathbf{MultCat}_{\mathcal G}(\mathcal G,\mathcal C^\otimes)
\cong\mathbf{Alg}(\mathcal C)\ ,
\end{multlined}
\end{equation}
where the second functor is induced by the precomposition with $M_{\mathcal G}:\mathcal G\to\widetilde\Delta^\otimes_{\mathfrak J^\natural\mathcal G}$ is an equivalence of categories.
\end{proposition}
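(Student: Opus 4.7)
The plan is to construct an explicit pseudo-inverse $\Phi:\mathbf{Alg}(\mathcal C)\to\mathbf{MonCat}_{\mathcal G}(\widetilde\Delta_{\mathfrak J^\natural\mathcal G},\mathcal C)$ to the composition \eqref{eq:Alg-Gsymrep:ind} and then verify that the two composites are naturally isomorphic to the respective identities.

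First, given an algebra $A\in\mathbf{Alg}(\mathcal C)$, MacLane's theorem (recalled at the start of the section) produces a strict monoidal functor $F_A^0:\widetilde\Delta\to\mathcal C$ with $F_A^0(\langle n\rangle)=A^{\otimes n}$ and $F_A^0(\mu_n)$ equal to the $n$-fold multiplication $\mu_n^A:A^{\otimes n}\to A$. I then extend $F_A^0$ over the whole total category by setting $\Phi(A):=F$ with
\[
F((\varphi,x)) := F_A^0(\varphi)\circ\Theta^x_{A,\dots,A} : A^{\otimes m}\to A^{\otimes n}
\]
for a morphism $(\varphi,x):\langle m\rangle\to\langle n\rangle$ with $\varphi\in\widetilde\Delta$ and $x\in\mathcal G(m)$, where $\Theta^x$ is the natural transformation from \eqref{eq:Gsym-moncat} associated with the $\mathcal G$-symmetric structure of $\mathcal C$.

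Next I verify that $\Phi(A)$ is a well-defined $\mathcal G$-symmetric strict monoidal functor. Using the composition rule $(\varphi_1,x_1)\circ(\varphi_2,x_2)=(\varphi_1\varphi_2^{x_1},\varphi_2^\ast(x_1)x_2)$ in the total category together with $\Theta^{yz}=\Theta^y\Theta^z$, the required functoriality reduces to the equivariance identity
\[
F_A^0(\varphi_2^{x_1})\circ\Theta^{\varphi_2^\ast(x_1)}_{A,\dots,A} = \Theta^{x_1}_{A,\dots,A}\circ F_A^0(\varphi_2),
\]
which is a direct consequence of the multifunctoriality of $\mathtt{sym}:\mathcal C^\otimes\rtimes\mathcal G\to\mathcal C^\otimes$ and the coherence axioms \eqref{eq:Xsym-cohdiags}. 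Strict monoidality follows from $(\varphi_1,x_1)\star(\varphi_2,x_2)=(\varphi_1\star\varphi_2,\gamma(e_2;x_1,x_2))$ combined with the operadic compatibility $\Theta^{\gamma(e_2;x_1,x_2)}=\Theta^{x_1}\otimes\Theta^{x_2}$ which is built into the $\mathcal G$-symmetric structure via \eqref{eq:Xsym-cohdiags}. The $\mathcal G$-symmetry of $\Phi(A)^\otimes$ is essentially by construction, because the defining formula encodes the $\mathcal G$-component of a morphism as precisely the $\mathtt{sym}$-action on the side of $\mathcal C^\otimes$, in perfect parallel with the description of the $\mathcal G$-symmetric structure on $\widetilde\Delta_{\mathfrak J^\natural\mathcal G}$ given in \eqref{eq:prf:augDelta-totmon:Gsym}.

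Finally, the two composites are handled as follows. The calculation
\[
M_{\mathcal G}^\ast\Phi(A)^\otimes(x) = \Phi(A)((\mu_n,x)) = \mu_n^A\circ\Theta^x_{A,\dots,A}
\]
shows that $M_{\mathcal G}^\ast\Phi(A)^\otimes$ corresponds to the algebra $A$ under the canonical identification $\mathbf{Alg}(\mathcal C)\cong\mathbf{MultCat}_{\mathcal G}(\mathcal G,\mathcal C^\otimes)$ arising from $\ast\rtimes\mathcal G\cong\mathcal G$. Conversely, for a $\mathcal G$-symmetric monoidal functor $F:\widetilde\Delta_{\mathfrak J^\natural\mathcal G}\to\mathcal C$, the restriction $F|_{\widetilde\Delta}$ is a monoidal functor that corresponds by MacLane to the algebra $A:=M_{\mathcal G}^\ast F^\otimes$; hence $F|_{\widetilde\Delta}\cong F_A^0$ by a canonical monoidal isomorphism, and the $\mathcal G$-symmetry of $F$ forces its value on a general $(\varphi,x)$ to agree with $F_A^0(\varphi)\circ\Theta^x$, yielding an isomorphism $F\cong\Phi(A)$. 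The main technical obstacle is the functoriality check: the composition rule of the total category involves the non-trivial crossed action $\varphi_2^{x_1}$, and matching it with the two-step composite on the target side requires a careful unpacking of the equivariance encoded by $\mathtt{sym}$. Once that identity is derived, the rest of the argument is formal.
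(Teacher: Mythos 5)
Your proposal is correct and follows essentially the same route as the paper: both construct the pseudo-inverse by sending $(\varphi,x)$ to the composite of $\Theta^x$ with the multiplication maps determined by $\varphi$, reduce functoriality to the same equivariance identity for $\Theta$ (the paper's commuting naturality square for $\Theta^x$ against $F(e_{l_1})\otimes\dots\otimes F(e_{l_m})$ is exactly your identity $F_A^0(\varphi_2^{x_1})\circ\Theta^{\varphi_2^\ast(x_1)}=\Theta^{x_1}\circ F_A^0(\varphi_2)$), and then exhibit the two composites as (naturally isomorphic to) identities via the monoidal comparison isomorphisms. The only cosmetic difference is that you invoke MacLane's theorem to handle the $\widetilde\Delta$-part as a black box, whereas the paper rebuilds it directly in the unbiased convention.
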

\begin{proof}
We actually construct the inverse to the composition, say $M^\otimes_{\mathcal G}$, of the first two functors in \eqref{eq:Alg-Gsymrep:ind}.
We use the unbiased convention for monoidal categories described in Chapter 3 in \cite{Lei04}.

Notice first that every morphism $\langle m\rangle\to\langle n\rangle\in\widetilde\Delta$ can be uniquely written in the form $\mu_{k_1}\star\dots\star\mu_{k_n}$ for a sequence $\vec k=(k_1,\dots,k_n)$ with $k_1+\dots+k_n=m$ (cf. \cref{eq:nablamorph-vect}), where $\mu_k:\langle k\rangle\to\langle 1\rangle$ is the unique map into the terminal object $\langle 1\rangle$ in $\widetilde\Delta$.
For a $\mathcal G$-symmetric multifunctor $F:\mathcal G\to\mathcal C^\otimes$, we define a functor $F_\otimes:\widetilde\Delta_{\mathfrak J^\natural\mathcal G}\to\mathcal C$ as follows:
\begin{itemize}
  \item for each $n\in\mathbb N$, we set $F_\otimes(\langle n\rangle):=F(\ast)^{\otimes n}$, where $\ast$ is the unique object of the operad $\mathcal G$;
  \item for each morphism $(\varphi,x):\langle m\rangle\to\langle n\rangle\in\widetilde\Delta_{\mathfrak J^\natural\mathcal G}$, say $\varphi=\mu_{k_1}\star\dots\star\mu_{k_n}$, we define a morphism $F_\otimes(\varphi,x):F(\ast)^{\otimes m}\to F(\ast)^{\otimes n}$ to be the composition
\[
F(\ast)^{\otimes m}
\xrightarrow{\Theta^x} F(\ast)^{\otimes m}
\cong F(\ast)^{\otimes k_1}\otimes\dots\otimes F(\ast)^{\otimes k_n}
\xrightarrow{F(e_{k_1})\otimes\dots\otimes F(e_{k_1})} F(\ast)^{\otimes n}
\ ,
\]
where $\Theta^x$ is the natural transformation defined in \eqref{eq:Gsym-moncat}.
\end{itemize}
The naturality of $\Theta^x$ implies that, if we have a sequence $(l_1,\dots,l_m)$, the square below commutes in $\mathcal C$:
\[
\vcenter{
  \xymatrix@C=10em{
    F(\ast)^{\otimes l_1}\otimes\dots\otimes F(\ast)^{\otimes l_m} \ar[d]_{\Theta^x} \ar[r]^-{F(e_{l_1})\otimes\dots\otimes F(e_{l_m})} & F(\ast)^{\otimes m} \ar[d]^{\Theta^x} \\
    F(\ast)^{\otimes l_{x^{-1}(l_1)}}\otimes\dots\otimes F(\ast)^{\otimes l_{x^{-1}(m)}} \ar[r]^-{F(e^{}_{l_{x^{-1}(1)}})\otimes\dots\otimes F(e^{}_{l_{x^{-1}(m)}})} & F(\ast)^{\otimes m} }}
\quad.
\]
Since the left vertical arrow agrees with the morphism $\Theta^{\gamma(x;e_{l_1},\dots,e_{l_m})}$ under the isomorphisms
\[
\begin{split}
F(\ast)^{\otimes l_1}\otimes\dots\otimes F(\ast)^{\otimes l_m}
&\cong F(\ast)^{\otimes(l_1+\dots+l_m)} \\
&\cong F(\ast)^{\otimes l_{x^{-1}(l_1)}}\otimes\dots\otimes F(\ast)^{\otimes l_{x^{-1}(m)}}\ ,
\end{split}
\]
the functoriality of $F_\otimes$ follows.
In addition, it is straightforward from the definition that $F_\otimes$ is monoidal, with the comparison isomorphism
\[
\begin{split}
F(\langle k_1\rangle)\otimes\dots\otimes F(\langle k_n\rangle)
&= F(\ast)^{\otimes k_1}\otimes\dots\otimes F(\ast)^{\otimes k_n} \\
&\cong F(\ast)^{\otimes(k_1+\dots+k_n)} \\
&= F(\langle k_1\rangle\star\dots\star\langle k_n\rangle)\ ,
\end{split}
\]
and $\mathcal G$-symmetric.
On the other hand, note that a multinatural transformation $\alpha:F\to G:\mathcal G\to\mathcal C^\otimes$ consists of a morphism $\alpha:F(\ast)\to G(\ast)\in\mathcal C$ such that, for each $k\in\mathbb N$, the square below is commutative:
\[
\vcenter{
  \xymatrix{
    F(\ast)^{\otimes k} \ar[d]_{F(e_k)} \ar[r]^{\alpha^{\otimes k}} & G(\ast)^{\otimes k} \ar[d]^{G(e_k)} \\
    F(\ast) \ar[r]^\alpha & G(\ast) }}
\quad.
\]
It immediately follows that, setting $(\alpha_\otimes)_{\langle k\rangle}:=\alpha^{\otimes k}$, we get a natural transformation $\alpha_\otimes:F_\otimes\to G_\otimes$.
Hence, the construction above defines a functor
\[
(\blank)_\otimes:
\mathbf{MultCat}_{\mathcal G}(\mathcal G,\mathcal C^\otimes)
\to \mathbf{MonCat}_{\mathcal G}(\widetilde\Delta_{\mathfrak J^\natural\mathcal G},\mathcal C)\ .
\]

The composition $M^\otimes_{\mathcal G}\circ(\blank)_\otimes$ is clearly identified with the identity functor on $\mathbf{MultCat}_{\mathcal G}(\mathcal G,\mathcal C^\otimes)$.
On the other hand, if $F:\widetilde\Delta_{\mathfrak J^\natural\mathcal G}\to\mathcal C$ is a $\mathcal G$-symmetric monoidal functor, it is equipped with an isomorphism
\[
\lambda_n:(M^\otimes_{\mathcal G}(F))_\otimes(\langle n\rangle)
= M^\otimes_{\mathcal G}(F)(\ast)^{\otimes n}
= F(\langle 1\rangle)^{\otimes n}
\xrightarrow\cong F(\langle n\rangle)
\]
for each $n\in\mathbb N$.
We assert $\lambda=\{\lambda_n\}_n$ forms a natural isomorphism $M^\otimes_{\mathcal G}(\blank)_\otimes\cong\mathrm{Id}$.
Indeed, for each sequence $k_1,\dots,k_n$ of non-negative integers, the coherence diagram for $\lambda$ guarantees the following diagram to commute:
\begin{equation}
\label{eq:prf:Alg-Gsymrep}
\vcenter{
  \xymatrix{
    F(\langle 1\rangle)^{\otimes {k_1+\dots+k_n}} \ar[r]^-\cong \ar[d]_{\lambda_{k_1+\dots+k_n}} & F(\langle 1\rangle)^{\otimes k_1}\otimes\dots\otimes F(\langle 1\rangle)^{\otimes k_n} \ar[d]^{\lambda_{k_1}\otimes\dots\otimes\lambda_{k_n}} \\
    F(\langle k_1+\dots+k_n\rangle) \ar[d]_{F(\mu_{k_1}\star\dots\star\mu_{k_n})} & F(\langle k_1\rangle)\otimes\dots\otimes F(\langle k_n\rangle) \ar[l]_-{\lambda_n} \ar[d]^{F(\mu_{k_1})\otimes\dots\otimes F(\mu_{k_n})} \\
    F(\langle n\rangle) & F(\langle 1\rangle)^{\otimes n} \ar[l]_{\lambda_n} }}
\quad,
\end{equation}
where $\mu_k:\langle k\rangle\to\langle 1\rangle$ is the unique morphism to the terminal object.
Notice that, according to the description of the multifunctor $F^\otimes:\widetilde\Delta_{\mathfrak J^\natural\mathcal G}^\otimes\to\mathcal C^\otimes$ in the unbiased convention, the composition of the top arrow and the right vertical arrows exactly gives the morphism
\[
M^\otimes_{\mathcal G}(F)_\otimes(\mu_{k_1}\star\dots\star\mu_{k_n}):M^\otimes_{\mathcal G}(F)_\otimes(\langle k_1+\dots+k_n\rangle)\to M^\otimes_{\mathcal G}(F)_\otimes(\langle n\rangle)\ .
\]
Hence, it follows from the commutativity of \eqref{eq:prf:Alg-Gsymrep} that $\lambda$ is a natural isomorphism.
Moreover, the upper half square in \eqref{eq:prf:Alg-Gsymrep} is precisely the coherence diagram for $\lambda$ to be monoidal.
In other words, we obtain an isomorphism $\lambda:M^\otimes_{\mathcal G}(F)_\otimes\cong F$ in the category $\mathbf{MonCat}_{\mathcal G}(\widetilde\Delta_{\mathfrak J^\natural\mathcal G},\mathcal C)$.
If $\alpha:F\to G:\widetilde\Delta_{\mathfrak J^\natural\mathcal G}\to\mathcal C$ is a monoidal natural transformation, then the coherence of $\alpha$ gives us the equation
\[
\lambda\circ M^\otimes_{\mathcal G}(\alpha)_\otimes
= \alpha\circ\lambda\ ,
\]
so that $\lambda$ actually defines a natural isomorphism $M^\otimes_{\mathcal G}(\blank)_\otimes\cong\mathrm{Id}$, which exhibits the functor $(\blank)_\otimes$ as an inverse to $M^\otimes_{\mathcal G}$.
\end{proof}

To conclude the paper, we mention the relation of the category $\widetilde\Delta_{\mathfrak J^\natural\mathcal G}$ to the Hochschild homologies for algebras.
For this, we need to recall the \emph{paracyclic category} $\Lambda_\infty$, which has the following description due to \cite{GetzlerJones1993} and \cite{Elmendorf1993} (see also Section~3 of \cite{Nistor1990}):
\begin{itemize}
  \item the objects are natural numbers $n\in\mathbb N$;
  \item the hom-set $\Lambda_\infty(m,n)$ consists of order-preserving maps $f:\mathbb Z\to\mathbb Z$ (with respect to the standard linear order on the integers) such that for each $i\in\mathbb Z$, we have
\[
f(i+m+1)=f(i)+n+1\ ;
\]
  \item the composition is the obvious one.
\end{itemize}
As pointed out by Fiedorowicz and Loday, $\Lambda_\infty$ is the total category of a crossed simplicial group $\mathbfcal Z$.
Indeed, there is a canonical faithful and bijective-on-object functor $\Delta\to\Lambda_\infty$, and the unique factorization in $\Lambda_\infty$ exhibits $\mathbfcal Z=\{\operatorname{Aut}_{\Lambda_\infty}(n)\}_n$ as a crossed simplicial group.
It is easily verified that $\mathbfcal Z_n\cong\mathbb Z$ with the simplicial degree.
The following result is due to Elmendorf.

\begin{proposition}[Proposition~1 in \cite{Elmendorf1993}]
\label{prop:paracyc-dual}
There is an identity-on-object functor $\overbar{(\blank)}:\Lambda_\infty^\opposite\to\Lambda_\infty$ such that, for $f\in\lambda_\infty(m,n)$,
\[
\overbar f:\mathbb Z\to\mathbb Z
\ ;\quad j \mapsto \min\{i\in\mathbb Z\mid j\le f(i)\}\ .
\]
\end{proposition}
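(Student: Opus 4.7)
The plan is to verify directly that the prescribed formula defines a morphism in $\Lambda_\infty(n,m)$ and that the assignment $f\mapsto\overbar f$ is contravariantly functorial while fixing objects. Since $f\in\Lambda_\infty(m,n)$ is order-preserving and satisfies $f(i+m+1)=f(i)+n+1$, it is unbounded in both directions, so $\{i\in\mathbb Z\mid j\le f(i)\}$ is nonempty and bounded below, and hence has a minimum; this gives well-definedness. Monotonicity of $\overbar f$ is immediate because the defining set only shrinks as $j$ grows. For the periodicity, substituting $i'=i-(m+1)$ and using $f(i)-(n+1)=f(i-(m+1))$ identifies the set $\{i\mid j+n+1\le f(i)\}$ bijectively with $\{i'+m+1\mid j\le f(i')\}$, yielding $\overbar f(j+n+1)=\overbar f(j)+m+1$. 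Thus $\overbar f\in\Lambda_\infty(n,m)$.

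Functoriality then amounts to two checks. The identity case is immediate: $\overbar{\mathrm{id}}(j)=\min\{i\mid j\le i\}=j$. For composition, given $f\in\Lambda_\infty(m,n)$ and $g\in\Lambda_\infty(n,p)$, I would first establish the Galois-style equivalence
\[
\overbar g(j)\le k\iff j\le g(k)\ ,
\]
whose reverse direction is simply the definition of the minimum, and whose forward direction uses order-preservation of $g$ together with the observation that $j\le g(\overbar g(j))$ by construction. Specializing $k=f(i)$ yields
\[
\{i\mid \overbar g(j)\le f(i)\}=\{i\mid j\le g(f(i))\}\ ,
\]
so taking minima gives $\overbar f(\overbar g(j))=\overbar{gf}(j)$. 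This establishes $\overbar{(\blank)}:\Lambda_\infty^\opposite\to\Lambda_\infty$ as an identity-on-object functor.

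The main point requiring care is the biconditional $\overbar g(j)\le k\iff j\le g(k)$: conceptually it says that $(\overbar g,g)$ forms a Galois connection on the poset $\mathbb Z$, and it is precisely this adjunction that powers the contravariant functoriality. Once that equivalence is in place, all remaining verifications reduce to elementary order-theoretic bookkeeping together with the bijection $i\mapsto i+m+1$ that implements the periodicity condition.
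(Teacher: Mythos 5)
Your verification is correct. The paper itself offers no proof here --- it simply cites Proposition~1 of Elmendorf --- so there is nothing to compare against, but your argument is complete: well-definedness and periodicity follow from the shift identity $f(i+m+1)=f(i)+n+1$ exactly as you say, and the Galois-connection equivalence $\overbar g(j)\le k\iff j\le g(k)$ (whose forward direction correctly uses $j\le g(\overbar g(j))$ together with monotonicity of $g$) is precisely the right lever for the contravariant functoriality $\overbar{gf}=\overbar f\circ\overbar g$. One could note explicitly that unboundedness of $f$ in both directions uses $n+1\ge 1$, but that is immediate from $n\in\mathbb N$.
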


\begin{example}
\label{ex:paracyc-dual-simp}
Let $\varphi:[m]\to[n]\in\Delta$ be a map in the simplex category.
Under the identification of it with the image in $\Lambda_\infty$, the composition
\[
\widehat\varphi:
\{0,\dots,n\}
\hookrightarrow\mathbb Z
\xrightarrow{\overbar\varphi}\mathbb Z
\twoheadrightarrow\mathbb Z/(m+1)\mathbb Z
\cong \{0,\dots,m\}
\]
is described as follows:
\[
\widehat\varphi^{-1}\{i\} =
\begin{cases}
\ \{0,1,\dots,\varphi(0),\varphi(m)+1,\dots,n\} & i=0\ , \\[1ex]
\ \{\varphi(i-1),\varphi(i-1)+1,\dots,\varphi(i)\} & 1\le i\le m\ .
\end{cases}
\]
Hence, seeing $\overbar\varphi$ as a morphism in the total category $\Delta_{\mathbfcal Z}$, we can write it as a pair
\[
\overbar\varphi
= \bigl(\mu_{\varphi(0)+n-\varphi(m)}\star\mu_{\varphi(1)-\varphi(0)}\star\dots\star\mu_{\varphi(m)-\varphi(m-1)},\tau^{n-\varphi(m)}_n\bigr),
\]
where $\star$ is the join of totally ordered sets, $\mu_k:[k-1]\to[0]$ is the unique morphism to the terminal object in $\Delta$, and $\tau_n\in\mathbfcal Z_n$ is the element corresponding to $1\in\mathbb Z$ under the canonical isomorphism $\mathbfcal Z_n\cong \mathbb Z$.
\end{example}

Now, suppose $\mathcal C$ is a $\mathcal G$-symmetric monoidal abelian category; i.e. it is abelian and equipped with a $\mathcal G$-symmetric monoidal structure so that the functor
\[
\mathord\otimes_n:\mathcal C^{\times n}\to\mathcal C
\]
is right exact in each variable for each $n\in\mathbb N$.
We denote by $\mathbf{Ch}(\mathcal C)$ the category of chain complexes in $\mathcal C$.
For a monoid object $A\in\mathcal C$, the Hochschild complex $C_\bullet(A)\in\mathbf{Ch}(\mathcal C)$ (with coefficient $A$) has the following construction, which is based on the one given in \cite{NikolausScholze2017}.
By virtue of \cref{prop:Alg-Gsymrep}, $A$ gives rise to a $\mathcal G$-symmetric monoidal functor $A_\otimes:\widetilde\Delta_{\mathfrak J^\natural\mathcal G}\to\mathcal C$.
On the other hand, the embedding described in Example~5.5 in \cite{Yoshidalimcolim} enables us to see $\mathbfcal Z$ as an augmented crossed simplicial group, so we can also form the total category $\widetilde\Delta_{\mathbfcal Z}$.
Note that the embedding $\Lambda_\infty\cong\Delta_{\mathbfcal Z}\hookrightarrow\widetilde\Delta_{\mathbfcal Z}$ identifies $\Lambda_\infty$ with the full subcategory of $\widetilde\Delta_{\mathbfcal Z}$ spanned by all but the initial object.
Hence, choosing a map $\mathbfcal Z\to\mathfrak J^\natural\mathcal G$ of augmented crossed simplicial groups, we obtain a \emph{paracyclic object} in $\mathcal C$:
\[
A^\circledcirc_\bullet:
\Lambda_\infty^\opposite
\cong\Lambda_\infty
\hookrightarrow\widetilde\Delta_{\mathbfcal Z}
\to\widetilde\Delta_{\mathfrak J^\natural\mathcal G}
\xrightarrow{A_\otimes} \mathcal C\ ,
\]
where the first isomorphism is the one given in \cref{prop:paracyc-dual}.
Then, the computations in \cref{ex:paracyc-dual-simp} shows that the Hochschild complex $C_\bullet(A)\in\mathbf{Ch}(\mathcal C)$ is isomorphic to the associated chain complex of the simplicial object $\left.A^\circledcirc\right|_{\Delta^\opposite}$.

\begin{remark}
In the above construction, we chose a map $\mathbfcal Z\to\mathfrak J^\natural\mathcal G$ of augmented crossed simplicial groups.
Note that, since $\mathbfcal Z$ is connected as a simplicial set, so the computation in Example~5.12 in \cite{Yoshidalimcolim} shows that for every augmented crossed simplicial group $G$, maps $\mathbfcal Z\to G$ of crossed simplicial groups correspond in one-to-one to those of augmented ones.
It follows that the Hochschild chain $C_\bullet(A)$ is actually indexed by the following isomorphic sets:
\[
\mathbf{CrsGrp}_{\Delta}(\mathbfcal Z,\mathfrak J^\natural\mathcal G)
\cong\mathbf{CrsGrp}_{\widetilde\Delta}(\mathbfcal Z,\mathfrak J^\natural\mathcal G)
\cong\mathbf{CrsGrp}_\nabla(\mathfrak J_\flat\mathbfcal Z,\mathcal G)\ .
\]
\end{remark}

\begin{example}
Take $\mathcal G=\mathcal B$ the group operad of braid groups.
In this case, there is the following canonical pullback square
\[
\vcenter{
  \xymatrix{
    \mathbfcal Z \ar[r] \ar[d] \ar@{}[dr]|(.4)\pbcorner & \mathfrak J^\natural\mathcal B \ar[d] \\
    \mathbfcal C \ar@{^(->}[]+R+(1,0);[r] & \mathfrak J^\natural\mathfrak S }}
\]
in the category $\mathbf{CrsGrp}_\Delta$, where $\mathbfcal C\subset\mathfrak J^\natural\mathfrak S$ is the crossed simplicial subgroup of cyclic groups.
Hence, there is a canonical choice of a Hochschild chain for a monoid object in a braided monoidal abelian category.
\end{example}


\bibliographystyle{plain}
\bibliography{mybiblio}

\begin{thebibliography}{10}

\bibitem{AdamekRosicky1994}
J.~Ad{\'a}mek and J.~Rosick{\'y}.
\newblock {\em Locally Presentable and Accessible Categories}, volume 189 of
  {\em London Mathematical Society Lecture Note Series}.
\newblock Cambridge University Press, 1994.

\bibitem{BataninMarkl2014}
M.~Batanin and M.~Markl.
\newblock Crossed interval groups and operations on the hochschild cohomology.
\newblock {\em Journal of Noncommutative Geometry}, pages 655--693, 2014.

\bibitem{CG13}
A.~S. Corner and N.~Gurski.
\newblock Operads with general groups of equivariance, and some $2$-categorical
  aspects of operads in cat.
\newblock arXiv:1312.5910, 2013.

\bibitem{Elmendorf1993}
A.~D. Elmendorf.
\newblock A simple formula for cyclic duality.
\newblock {\em Proceedings of the American Mathematical Society},
  118(3):709--711, 1993.

\bibitem{FL91}
Z.~Fiedorowicz and J-L Loday.
\newblock Crossed simplicial groups and their associated homology.
\newblock {\em Transactions of the American Mathematical Society},
  326(1):57--87, 1991.

\bibitem{GetzlerJones1993}
E.~Getzler and J.~D.~S. Jones.
\newblock The cyclic homology of crossed product algebras.
\newblock {\em Journal f\"ur die Reine und Angewandte Mathematik. [Crelle's
  Journal]}, 445:161--174, 1993.

\bibitem{Gur15}
N.~Gurski.
\newblock Operads, tensor products, and the categorical borel construction.
\newblock arXiv:1508.04050, 2015.

\bibitem{Hermida2000}
C.~Hermida.
\newblock Representable multicategories.
\newblock {\em Advances in Mathematics}, 151(2):164--225, 2000.

\bibitem{Kra87}
R.~Krasauskas.
\newblock Skew-simplicial groups.
\newblock {\em Lithuanian Mathematical Journal}, 27(1):47--54, 1987.
\newblock Translated from Litovski\u\i{} Matematichenski\u\i{} Sbornik
  (Lietuvos Matematikos Rinkinys), 27(1):89--99.

\bibitem{Lei04}
T.~Leinster.
\newblock {\em Higher operads, higher categories}.
\newblock Number 298 in London Mathematical Society Lecture Note Series.
  Cambridge University Press, Cambridge, 2004.

\bibitem{Lur14}
J.~Lurie.
\newblock Higher algebra.
\newblock see author's webpage, September 2014.

\bibitem{McL98}
S.~MacLane.
\newblock {\em Categories for the Working Mathematician}.
\newblock Number~5 in Graduate Texts in Mathematics. Springer-Verlag, second
  ed. edition, 1998.

\bibitem{NikolausScholze2017}
T.~Nikolaus and P.~Scholze.
\newblock On topological cyclic homology.
\newblock arXiv:1707.01799, 2017.

\bibitem{Nistor1990}
V.~Nistor.
\newblock Group cohomology and the cyclic cohomology of crossed products.
\newblock {\em Inventiones mathematicae}, 99(1):411--424, December 1990.

\bibitem{Wah01}
N.~Wahl.
\newblock {\em Ribbon braids and related operads}.
\newblock PhD thesis, University of Oxford, 2001.

\bibitem{Yoshidalimcolim}
J.~Yoshida.
\newblock Limits and colimits of crossed groups.
\newblock in preparation.

\bibitem{Zha11}
W.~Zhang.
\newblock Group operads and homotopy theory.
\newblock arXiv:1111.7090, part of the Ph.D. thesis, 2011.

\end{thebibliography}
\end{document}